\theoremstyle{definition}
\newtheorem{defn}{Definition}[section]
\newtheorem{ex}[defn]{Example}
\theoremstyle{plain}
\newtheorem{lemma}[defn]{Lemma}
\newtheorem{theorem}[defn]{Theorem}
\newtheorem{proposition}[defn]{Proposition}
\newtheorem{corollary}[defn]{Corollary}
\newcommand{\rank}{{\mathrm {rank}}}
\newcommand{\Pic}{{\text {Pic}}}
\newcommand{\Gr}{\mathrm{Gr}}
\newcommand{\Hom}{\mathrm{Hom}\,}
\begin{document}
\title[Degenerations and Landau-Ginzburg models]{Toric Degenerations and the Laurent polynomials related to Givental's Landau-Ginzburg models}

\author[C. F. Doran]{Charles F. Doran}
\address{Department of Mathematical and Statistical Sciences, 632 CAB, University of Alberta, Edmonton, Alberta T6G 2G1, Canada}
\email{charles.doran@ualberta.ca}

\author[A. Harder]{Andrew Harder}
\address{Department of Mathematical and Statistical Sciences, 632 CAB, University of Alberta, Edmonton, Alberta T6G 2G1, Canada}
\email{aharder@ualberta.ca}
\subjclass[2010]{14J28 (primary), 14D05, 14J32 (secondary)}

\date{\today}

\begin{abstract}
For an appropriate class of Fano complete intersections in toric varieties, we prove that there is a concrete relationship between degenerations to specific toric subvarieties and expressions for Givental's Landau-Ginzburg models as Laurent polynomials. As a result, we show that Fano varieties presented as complete intersections in partial flag manifolds admit degenerations to Gorenstein toric weak Fano varieties, and their Givental Landau-Ginzburg models can be expressed as corresponding Laurent polynomials.

We also use this to show that all of the Laurent polynomials obtained by Coates, Kasprzyk and Prince by the so called Przyjalkowski method \cite{ckp} correspond to toric degenerations of the corresponding Fano variety. We discuss applications to geometric transitions of Calabi-Yau varieties.
\end{abstract}
\maketitle
\section{Introduction}

Mirror symmetry for Fano varieties predicts that the mirror of a Fano variety $X$ is given by a quasi-projective variety $X^\vee$ equipped with a regular function $w:X^\vee \rightarrow \mathbb{A}^1$ (with appropriate choices of symplectic and complex structure on both $X$ and $X^\vee$) which satisfies certain conditions. In particual, homological mirror symmetry implies that there is a relationship between the bounded derived category of $X$ and the Fukaya-Seidel category of $(X^\vee,w)$, or conversely, the Fukaya category of $X$ is related to the derived category of singularities of $(X^\vee,w)$ (see, for instance, \cite{kkp1,kkp2} for details). The pair $(X^\vee,w)$ can be viewed as a family of varieties over $\mathbb{A}^1$. From a more classical point of view, mirror symmetry predicts that the periods of this family at infinity should be related to the Gromov-Witten invariants of $X$ \cite{ehx}.

As an example, if $X$ is a smooth $n$-dimensional toric Fano variety, then there should be some copy of $(\mathbb{C}^\times)^n$ contained in $X^\vee$ so that on this torus, $w$ is expressed as a Laurent polynomial
$$
w : (\mathbb{C}^\times)^n \rightarrow \mathbb{C}
$$
with Newton polytope equal to the polytope whose face fan from which $X$ itself is constructed. 

Galkin and Usnich (Problem 44, \cite{gu}) and Przyjalkowski (Optimistic picture 38, \cite{prz}) suggest that for each birational map
$$
\phi: (\mathbb{C}^\times)^n \dashrightarrow X^\vee
$$
so that the Newton polytope of $\phi^*w$ is $\Delta$, there is a degeneration of the Fano variety $X$ to $X_\Delta$. It is possible that $X^\vee$ is covered (away from a subset of codimension 2) by tori $(\mathbb{C}^\times)^n$ corresponding to toric varieties to which $X$ degenerates, and these charts are related by a generalized type of cluster mutation. Our main result (Theorem \ref{thm:main}) is very much in the spirit of this suggestion.

For $X$ a complete intersection in a toric variety, Givental \cite{Givental} provided a method of computing the Landau-Ginzburg model of $X$. This Landau-Ginzburg model is presented as complete intersection in $(\mathbb{C}^\times)^n$ which we call $X^\vee$ equipped with a function $w$. We call the pair $(X^\vee,w)$ obtained by Givental's method the {\it Givental Landau-Ginzburg model of $X$}.

In Section \ref{sect:general}, we introduce certain types of embedded toric degenerations of Fano complete intersections in toric varieties which we call {\it amenable toric degenerations}, and prove that they correspond to Laurent polynomial models of Givental's Landau-Ginzburg models.

\begin{theorem}[Theorem \ref{thm:main}]
Let $X$ be a complete intersection Fano variety in a toric variety $Y$. Let $X \rightsquigarrow X_\Sigma$ be an amenable toric degeneration of $X$, then the Givental Landau-Ginzburg model of $X$ can be expressed as a Laurent polynomial with Newton polytope equal to the convex hull of the rays generating the 1-dimensional strata of $\Sigma$.
\end{theorem}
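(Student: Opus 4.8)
The plan is to make Givental's construction explicit, combine it with the combinatorial data carried by the amenable degeneration, and then carry out a birational change of variables (the Przyjalkowski-style elimination \cite{ckp}) that turns the complete-intersection presentation of the Landau-Ginzburg model into a single Laurent polynomial whose monomial support I can identify with the rays of $\Sigma$. The guiding heuristic is that, since $X$ degenerates to $X_\Sigma$, its Givental model should be birational (as a function on a torus) to the toric mirror of $X_\Sigma$, whose superpotential $\sum_i x^{v_i}$ has monomials indexed by the ray generators $v_i$ of $\Sigma$ and hence Newton polytope exactly their convex hull.

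First I would write down Givental's model explicitly. Writing $Y$ as a toric variety with Cox coordinates indexed by its rays, and $X = \{s_1 = \cdots = s_c = 0\}$ as the complete intersection cut out by sections of line bundles $\mathcal{L}_1, \dots, \mathcal{L}_c$, Givental's recipe \cite{Givental} produces a pair $(X^\vee, w)$ in which $X^\vee$ is a complete intersection inside a torus $T$; the defining equations of $X^\vee$ record the line-bundle data of the $\mathcal{L}_i$, while $w$ is essentially a sum of the torus coordinates coming from the toric boundary divisors of $Y$. The first task is to organize this data so that the equations and the superpotential are written in a single coordinate system adapted to the degeneration, rather than in the form in which Givental's construction initially presents them.

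Next I would unpack the amenable hypothesis. The point of an amenable toric degeneration $X \rightsquigarrow X_\Sigma$ should be that it equips the defining sections $s_i$ with a distinguished toric (binomial) degeneration, compatible with the ambient toric structure of $Y$, whose combinatorial shadow is exactly the fan $\Sigma$; in particular the primitive generators of the one-dimensional cones of $\Sigma$ are to be read off from the monomials appearing in the degenerated equations and in $w$. I would use this both to identify which coordinates of $T$ can be solved for and to control which monomials of $w$ survive the reduction.

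The heart of the argument is then the elimination step. Using the now-binomial equations cutting out $X^\vee$ inside $T$, I would solve for the corresponding coordinates and substitute them back into $w$, producing a birational map $\phi : (\mathbb{C}^\times)^n \dashrightarrow X^\vee$ with $n = \dim X$ and a Laurent polynomial $\phi^* w$ on $(\mathbb{C}^\times)^n$. The remaining, and genuinely delicate, step is to track the exponent vectors of the monomials of $\phi^* w$ through this substitution and to verify that they are exactly the primitive generators of the rays of $\Sigma$ — no more and no fewer — so that the Newton polytope of $\phi^* w$ is precisely their convex hull. This exponent bookkeeping is where amenability must be used in full force: it is what guarantees both that the elimination is purely monomial (so that no spurious lattice points are introduced and no genuine ray is lost to cancellation) and that every ray of $\Sigma$ is actually attained. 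I expect this matching of supports to be the main obstacle, and the cleanest route seems to be to arrange the coordinate change so that it literally realizes the toric mirror of $X_\Sigma$, for which the conclusion is immediate.
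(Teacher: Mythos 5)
Your outline follows the same skeleton as the paper's argument: extend the amenable collection to a basis of $N$, use the triangularity of the pairings $\langle v_i, E_j\rangle$ to eliminate $x_1,\dots,x_k$ from the equations $\sum_{\rho\in E_i} a_\rho x^\rho = 1$, substitute into $w$, and then compare the Newton polytope of the result with the convex hull of $\Sigma_V[1]$. But the step you defer as ``exponent bookkeeping'' is where essentially all of the content of the theorem lives, and the route you propose for it would fail. The monomials of $\phi_V^* w$ are \emph{not} in bijection with the ray generators of $\Sigma_V$, and the elimination is not ``purely monomial'': each eliminated variable $x_i$ is replaced by a genuine Laurent \emph{polynomial} $f_i(x_{k+1},\dots,x_n)$ (a sum over all of $E_i$), so $\phi_V^* w$ acquires many monomials — all lattice points of the form $p + \sum_j u_j$ with $p\in E_{k+1}$, $u_j \in \bigcup_{i\le k} E_i$, orthogonal to $v_1,\dots,v_k$ (this is Proposition \ref{prop:sums}; see also Example \ref{ex:F125}, where the final $w$ visibly has far more terms than $\Sigma_V$ has rays). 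So one cannot ``arrange the coordinate change so that it literally realizes the toric mirror of $X_\Sigma$,'' and an exact matching of supports with the rays is both false and unnecessary.

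What is actually needed, and what the paper supplies, is a two-sided comparison through the intermediate polytope $\Delta_V = \{\rho : \langle v_i,\rho\rangle = 0,\ \varphi_{k+1}(\rho)\le 1\}$: (i) every exponent $p+\sum_j u_j$ of $\phi_V^*w$ satisfies $\varphi_{k+1}(p+\sum_j u_j)\le \varphi_{k+1}(p) = 1$ by convexity of $\varphi_{k+1}$ and $\varphi_{k+1}(u_j)=0$, so the support lies in $\Delta_V$ (Theorem \ref{thm:f=v}); (ii) each minimal cone of $\Sigma$ meeting $M_V$ in a ray contains a lattice point of exactly this form at height $1$ (Proposition \ref{prop:phi1}), so every vertex of $\Delta_V$ is attained by a monomial; and (iii) $\Delta_V$ coincides with the convex hull of $\Sigma_V[1]$, which requires a primitivity argument and the Cartier hypotheses of Proposition \ref{prop:primitive} — hypotheses your argument never invokes. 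Without (i)--(iii), or some substitute for them, the identification of the Newton polytope is unproved. There is also a smaller slip: the equations cutting out $X^\vee$ are not binomial (the binomials live in the Cox ring of $Y$ and cut out the degenerate fiber $X_{\Sigma_V}$, not the mirror); what makes the elimination produce Laurent polynomials is that in the adapted basis each mirror equation takes the form $x_i = f_i(x_{i+1},\dots,x_n)$ with nonnegative exponents in $x_{i+1},\dots,x_k$, which is precisely conditions (2) and (3) in the definition of an amenable collection.
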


In the case where $X$ is a smooth complete intersection in a weighted projective space, Przyjalkowski showed that there is a birational map $\phi:(\mathbb{C}^\times)^m \dashrightarrow X^\vee$ to the Givental Landau-Ginzburg model of $X$ so that $\phi^*w$ is a Laurent polynomial.  In \cite{ilp11}, Ilten, Lewis and Przyjalkowski have shown that there is a toric variety $X_\Delta$ expressed as a binomial complete intersection in the ambient weighted projective space so that the complete intersection $X$ admits a flat degeneration to $X_\Delta$.

Theorem \ref{thm:main} generalizes both the method of Przyjalkowski in \cite{prz}, and its subsequent generalization by Coates, Kasprzyk and Prince in \cite{ckp}. Theorem \ref{thm:main} shows that there are toric degenerations corresponding to all of the Laurent polynomials associated to Fano fourfolds obtained in \cite{ckp}, and that the Laurent polynomials are the toric polytopes of the associated degenerations. 

Using the toric degeneration techniques of \cite{gs} and \cite{bcfkvs1}, Przyjalkowski and Shramov have defined Givental Landau-Ginzburg models associated to complete intersection Fano varieties in partial flag varieties. They have shown that the  Givental Landau-Ginzburg models of complete intersections in Grassmannians $\Gr(2,n)$ can be expressed as Laurent polynomials. We provide an alternate approach to this question using Theorem \ref{thm:main}. This provides toric degenerations for most complete intersection Fano varieties in partial flag manifolds, and shows that we may express their Givental Landau-Ginzburg models as Laurent polynomials.

\begin{theorem}[Theorem \ref{thm:flags}]
Many Fano complete intersections in partial flag manifolds admit degenerations to toric weak Fano varieties $X_\Delta$ with at worst Gorenstein singularities and the corresponding Givental Landau-Ginzburg models may be expressed as Laurent polynomials with Newton polytope $\Delta$.
\end{theorem}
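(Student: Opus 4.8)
The plan is to reduce Theorem \ref{thm:flags} to Theorem \ref{thm:main} by producing, for a Fano complete intersection $X$ in a partial flag manifold $F$, an \emph{amenable} toric degeneration of $X$ whose associated fan $\Sigma$ has the desired Gorenstein weak Fano properties. The starting point must be a good toric degeneration of the ambient flag manifold itself. Here I would invoke the Gross--Siebert program together with the Batyrev--Ciocan-Fontanine--Kim--van Straten constructions (the works cited as \cite{gs} and \cite{bcfkvs1}): partial flag manifolds $F$ degenerate flatly to Gorenstein toric Fano varieties $Y_\Delta$ via so-called string/Gelfand--Tsetlin degenerations. The first step is therefore to fix such a degeneration $F \rightsquigarrow Y_\Delta$ and record the combinatorial data of the limiting fan, verifying that $Y_\Delta$ is Gorenstein and (weak) Fano.

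Next I would degenerate the complete intersection $X \subset F$ compatibly with the degeneration of the ambient space. The key technical point is that the defining sections of $X$ must specialize to monomial or binomial sections on the toric limit so that the flat limit $X_\Sigma \subset Y_\Delta$ is again toric; this is the content of exhibiting an embedded toric degeneration in the sense used for Theorem \ref{thm:main}. Concretely I would choose the complete intersection to be cut out by pullbacks of sections of the relevant nef line bundles on $F$ whose degenerations under the Gross--Siebert/BCFKvS flat family remain torus-invariant, and then check that the flat limit has the Gorenstein weak Fano property. The phrase ``many'' in the statement signals precisely the hypothesis needed here: one restricts to those complete intersections whose bundles degenerate nicely and whose limits stay within the Gorenstein weak Fano regime, and this genericity/positivity condition is exactly where the argument is constrained.

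The crucial remaining step is to check the \emph{amenability} hypothesis of Theorem \ref{thm:main} for the degeneration $X \rightsquigarrow X_\Sigma$ just constructed. Once amenability is established, Theorem \ref{thm:main} applies directly and yields that the Givental Landau-Ginzburg model of $X$ is expressed as a Laurent polynomial whose Newton polytope is the convex hull of the rays generating the one-dimensional strata of $\Sigma$; identifying this polytope with the promised $\Delta$ is then a matter of matching the combinatorics of the limiting fan with the polytope data coming from the flag degeneration. For the comparison with Przyjalkowski--Shramov in the Grassmannian case $\Gr(2,n)$, I would verify that the Gelfand--Tsetlin degeneration reproduces (after the amenability reduction) the same Laurent polynomials they obtain, giving the promised alternate approach.

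I expect the main obstacle to be verifying amenability in the flag setting, since this requires controlling how the Givental Landau-Ginzburg construction interacts with the intrinsically non-toric geometry of $F$ and its degeneration. Unlike the weighted projective space case handled by Przyjalkowski and by Coates--Kasprzyk--Prince, partial flag manifolds are not themselves toric, so one cannot directly read off the ambient torus and Newton polytope; instead one must work through the auxiliary toric degeneration $Y_\Delta$ and ensure that the conditions defining an amenable degeneration—roughly, compatibility of the complete intersection data with the torus-invariant strata of $\Sigma$—survive the passage from $F$ to $Y_\Delta$. This is precisely the step that forces the qualifier ``many'' rather than ``all'' in the statement, and handling the boundary cases where amenability fails (or where the toric limit acquires worse-than-Gorenstein singularities) is where the real work lies.
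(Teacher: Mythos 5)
Your overall strategy matches the paper's: degenerate the ambient flag manifold $F(n_1,\dots,n_l,n)$ to the Gorenstein toric variety $P(n_1,\dots,n_l,n)$ (via Gonciulea--Lakshmibai and \cite{bcfkvs1} --- note that \cite{gs} here is Gonciulea--Lakshmibai, not Gross--Siebert), carry the complete intersection along to a nef Cartier complete intersection $X'_{\overline{d}}$ in the toric limit, and then invoke Theorem \ref{thm:main}. But there is a genuine gap: you correctly identify the verification of amenability as ``the crucial remaining step'' and ``the main obstacle,'' and then you do not carry it out. That verification is the entire mathematical content of the paper's Section \ref{sect:flag}. Concretely, the paper encodes $P(n_1,\dots,n_l,n)$ by the ladder graph $\Gamma(n_1,\dots,n_l,n)$, takes the nef partition $E_i=\bigcup_{\alpha\in\mathcal{U}_i}U(\alpha)$ determined by a choice of arrows $\mathcal{U}_i$ in the roofs, and then writes down explicit linear functionals $\ell_\alpha\in N$ on the lattice spanned by the black vertices --- one formula when $\alpha$ is a vertical roof arrow, another when $\alpha$ is horizontal --- computes their values on every arrow of the graph, and shows (Proposition \ref{prop:amenflag}) that the sums $\ell_{\mathcal{U}_i}=\sum_{\alpha\in\mathcal{U}_i}\ell_\alpha$ satisfy the three defining conditions of an amenable collection. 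Without some such explicit construction, Theorem \ref{thm:main} simply cannot be applied, so the proposal as written does not prove the statement.

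A secondary inaccuracy: you locate the source of the qualifier ``many'' in whether the bundles ``degenerate nicely'' under the flat family, or in whether the toric limit stays Gorenstein. In the paper the degeneration of the ambient space and the restriction of the line bundles work uniformly; the actual constraint is the purely combinatorial condition $\sum_{i=1}^k d_i^{(j)}<m_j$ on the multidegrees relative to the number of edges in each roof, which is what allows one to choose the disjoint sets $\mathcal{U}_{i,j}$ of roof arrows avoiding those whose head is a white vertex. That choice is what makes the explicit amenable collection exist, and hence is where ``many'' rather than ``all'' enters.
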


Of course, the word ``many'' will be explained in detail in Section \ref{sect:flag}, but as an example, this theorem encompasses all complete intersections in Grassmannians.

\subsection{Organization}
This paper will be organized as follows. In Section \ref{sect:general}, we will recall facts about toric varieties and use them to prove Theorem \ref{thm:main}. In Section \ref{sect:flag}, we will apply the results of Section \ref{sect:general} to exhibit toric degenerations of complete intersections Fano varieties in partial flag manifolds and show that their Givental Landau-Ginzburg models admit presentations as Laurent polynomials. In Section \ref{sect:apply}, we shall comment on further applications to the Przyjalkowski method of \cite{ckp} and how our method seems to relate to the construction of toric geometric transitions as studied by Mavlyutov \cite{mav2} and Fredrickson \cite{fred}.

\subsection*{Acknowledgements}
The second author would like to thank V.Przyjalkowski for many insightful and encouraging conversations and for suggesting the application to complete intersections in partial flag varieties. C. Doran was supported the National Sciences and Engineering Research Council, the Pacific Institute for Mathematical Sciences and the McCalla professorship at the University of Alberta. A.Harder was supported by a National Sciences and Engineering Research Council Doctoral Post-Graduate Scholarship.
\section{General results}
\label{sect:general}
Here we describe the relationship between degenerations of complete intersections in toric Fano varieties with nef anticanonical divisor and their Landau-Ginzburg models. We will begin with a rapid recollection of some facts about toric varieties. A general reference for all of these facts is \cite{cls}.

\subsection{Toric facts and notation}

Throughout, we will use the convention that $M$ denotes a lattice of rank $n$, and $N$ will be $\Hom(M,\mathbb{Z})$. We denote the natural bilinear pairing between $N$ and $M$ by 
$$
\langle -, - \rangle: N \times M \rightarrow \mathbb{Z}.
$$
The symbol $\Sigma$ will denote a complete fan in $M \otimes_\mathbb{Z} \mathbb{R}$, and $X_\Sigma$ or $Y_\Sigma$ will be used to denote the toric variety associated to the fan $\Sigma$. We will let $\Delta$ be a convex polytope in $M\otimes_\mathbb{Z} \mathbb{R}$ with all vertices of $\Delta$ at points in $M$, which contains the origin in its interior. 

We will let $\Sigma_\Delta$ be the fan over the faces of the polytope $\Delta$, and we will also denote the toric variety $X_{\Sigma_\Delta}$ by $X_\Delta$. If $\Delta$ is an integral convex polytope, then we will let $\Delta[n]$ be the set of dimension $n$ strata of $\Delta$. In particular, denote by $\Delta[0]$ the vertices of $\Delta$. We will abuse notation slightly and let $\Sigma[1]$ be the set of primitive ray generators of the fan $\Sigma$. Similarly, if $C$ is a cone in $\Sigma$, then $C[1]$ will denote the set of primitive ray generators of $C$.

There is a bijection between primitive ray generators of $\Sigma$ and the torus invariant Weil divisors on $X_\Sigma$.

If $X_\Sigma$ is a toric variety, then $X_\Sigma$ has a Cox homogeneous coordinate ring which is graded by $G_{\Sigma} = \Hom(\mathrm{A}_{n-1}(X_\Sigma),\mathbb{C}^\times)$. There is a short exact sequence
$$
0 \rightarrow N \rightarrow \mathbb{Z}^{\Sigma[1]} \rightarrow \mathrm{A}_{n-1}(X_\Sigma) \xrightarrow{g} 0
$$
where the first map assigns to a point $u \in N$ the vector
$$
(\langle n, \rho \rangle)_{\rho \in \Sigma[1]}.
$$
Elements of $\mathbb{Z}^{\Sigma[1]}$ are in bijection with torus invariant Weil divisors and the map $g$ assigns to a torus invariant Weil divisor on $X_\Sigma$ its class in the Chow group $\mathrm{A}_{n-1}(X_\Sigma)$. 

Applying the functor $\Hom(-,\mathbb{C}^\times)$ to the above short exact sequence, we get a sequence
$$
1 \rightarrow G_\Sigma \rightarrow (\mathbb{C}^\times)^{\Sigma[1]} \rightarrow T_M \rightarrow 1
$$
where $T_M = M \otimes_\mathbb{Z} \mathbb{C}^\times$. Let $x_\rho$ be a standard basis of rational functions on $(\mathbb{C}^\times)^{\Sigma[1]}$. There is a partial compactification of $(\mathbb{C}^\times)^{\Sigma[1]}$, which we may call $V_\Sigma$ 
$$
(\mathbb{C}^\times)^{\Sigma[1]} \subseteq V_\Sigma \subseteq \mathbb{C}^{\Sigma[1]}.
$$
so that there is an induced action of $G_\Sigma$ on $V_\Sigma$ and linearizing line bundle so that the categorical quotient $V_\Sigma// G_\Sigma$ is the toric variety $X_\Sigma$. Since we have assumed that $\Sigma$ is complete, the homogeneous coordinate ring of $X_\Sigma$ is $\mathbb{C}[\{x_\rho\}_{\rho \in \Sigma[1]}]$ equipped with the grading given by the action of $G_\Sigma$. We will abuse notation and say that a subvariety of $X_\Sigma$ is a complete intersection in $X_\Sigma$ if it corresponds to a quotient of a complete intersection in $V_\Sigma$. 

The sublocus of $X_\Sigma$ corresponding to $D_\rho = \{x_\rho = 0 \}$ is exactly the torus invariant divisors associated to the ray generator $\rho$. Despite being given by the vanishing of a function in the homogeneous coordinate ring, these divisors need not be Cartier. A torus invariant divisor $D =\sum_{\rho \in \Sigma[1]} a_\rho D_\rho$ is Cartier if and only if there is some piecewise linear function $\varphi$ on $M \otimes_\mathbb{Z} \mathbb{R}$ which is linear on the cones of $\Sigma$, which takes integral values on $M$. If $\phi$ is upper convex then the divisor $D$ is nef.

The canonical divisor of a toric variety $X_\Sigma$ is the divisor $K_{X_\Sigma} = -\sum_{\rho \in \Sigma[1]} D_\rho$. A toric variety $X_\Sigma$ is called $\mathbb{Q}$-Gorenstein if its canonical divisor is $\mathbb{Q}$-Cartier, and Gorenstein if its canonical divisor is Cartier. In the future, we will be concerned solely with $\mathbb{Q}$-Gorenstein toric varieties.

A {\it nef partition} of $\Sigma$ will be a partition of $\Sigma[1]$ into sets $E_1,\dots, E_{k+1}$ so that there exist integral upper convex piecewise linear functions $\varphi_1,\dots, \varphi_{k+1}$ so that 
$$
\varphi_i(E_j) \geq \delta_{ij}.
$$
This means that for each maximal cone $C$ in the fan $\Sigma$, there is some vector $u_C \in N$ so that 
$$
\varphi_i(v) = \mathrm{max} \{\langle u_C, v \rangle \}_{C \in \Sigma}
$$
A {\it $\mathbb{Q}$-nef partition} will be a partition of $\Sigma[1]$ exactly as above, except we no longer require that the functions $\varphi_i$ be integral, but only that they take rational values on $u\in M$. This is equivalent to the fact that each $\varphi_i$ is determined by a vector $u_C \in N \otimes_\mathbb{Z} \mathbb{Q}$ for each maximal cone $C$ of $\Sigma$. 

The divisors determined by a $\mathbb{Q}$-nef partition are $\mathbb{Q}$-Cartier. Note that the existence of a $\mathbb{Q}$-nef partition implies that $X_\Sigma$ is $\mathbb{Q}$-Gorenstein and the existence of a nef partition implies that $X_\Sigma$ is Gorenstein.

\subsection{Amenable collections of vectors}

We begin by letting $X$ be a complete intersection in a toric variety $Y_\Sigma$ of the following type.
\begin{defn}
We say that $X$ is a {\it quasi-Fano complete intersection} in $Y_\Sigma$ if there are divisors $Z_1,\dots, Z_k$ defined by homogeneous equations $f_i$ in the homogeneous coordinate ring of $Y_\Sigma$ so that $(f_1,\dots,f_k)$ forms a regular sequence in $\mathbb{C}[\{x_\rho\}_{\rho \in \Sigma[1]}]$, and there is a $\mathbb{Q}$-nef partition $E_1,\dots,E_{k+1}$ so that 
$$
Z_i \sim \sum_{\rho \in E_i} D_\rho.
$$
\end{defn}

We now define the central object of study in this paper. Fix a $\mathbb{Q}$-nef partition of $\Sigma$ as $E_1,\dots, E_{k+1}$ and let $X$ be a corresponding quasi-Fano complete intersection. 

\begin{defn}
An {\it amenable collection of vectors} subordinate to a $\mathbb{Q}$-nef partition $E_1,\dots,E_{k+1}$ is a collection $V = \{v_1,\dots,v_k\}$ of vectors satisfying the following three conditions
\begin{enumerate}
\item For each $i$, we have $\langle v_i, E_ i \rangle = -1$
\item For each $j$ so that $k+1 \geq j \geq i+1$, we have $\langle v_i, E_j \rangle \geq 0$.
\item For each $j$ so that $0 \leq j \leq i-1$, we have $\langle v_i, E_j \rangle = 0$.
\end{enumerate}
\end{defn}
Note that this definition depends very strongly upon the order of $E_1,\dots, E_k$. Now let us show that an amenable collection of vectors may be extended to a basis of $N$.
\begin{proposition}
\label{prop:basis}
An amenable set of vectors $v_i$ spans a saturated subspace of $N$. In particular, there is a basis of $N$ containing $v_1,\dots, v_{k}$. 
\end{proposition}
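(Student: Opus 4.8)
The plan is to reduce the statement to producing a single group homomorphism $\psi \colon N \to \mathbb{Z}^k$ that carries $v_1,\dots,v_k$ to a basis of $\mathbb{Z}^k$. Indeed, if such a $\psi$ exists, then it restricts to an isomorphism from the subgroup $L = \langle v_1,\dots,v_k\rangle$ onto $\mathbb{Z}^k$: it is surjective because the images of the generators already span, and injective because those images are linearly independent, which in turn forces the $v_i$ themselves to be linearly independent. Surjectivity of $\psi|_L$ makes $\psi$ itself surjective, so the sequence $0 \to \ker\psi \to N \xrightarrow{\psi} \mathbb{Z}^k \to 0$ splits (as $\mathbb{Z}^k$ is free), with the inclusion $L \hookrightarrow N$ providing a splitting. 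Hence $N = L \oplus \ker\psi$, which shows simultaneously that $L$ is a direct summand, i.e. a saturated sublattice, and that any basis of $\ker\psi$ extends $v_1,\dots,v_k$ to a basis of $N$.

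It then remains to construct $\psi$, and here I would exploit the pairing together with the triangular shape built into the definition. Since each part $E_j$ of the $\mathbb{Q}$-nef partition is nonempty, I choose a ray generator $\rho_j \in E_j$ for $j = 1,\dots,k$ and define $\psi(u) = (\langle u,\rho_1\rangle,\dots,\langle u,\rho_k\rangle)$, which lands in $\mathbb{Z}^k$ because the ray generators lie in $M$ and the pairing is integral. The images $\psi(v_i)$ are precisely the rows of the integer matrix $A = (\langle v_i,\rho_j\rangle)_{i,j=1}^{k}$, and I claim $A$ is triangular with $-1$ on the diagonal. Condition (3) gives $\langle v_i,\rho_j\rangle = 0$ whenever $j < i$; condition (1) gives the diagonal entries $\langle v_i,\rho_i\rangle = -1$; and condition (2) merely constrains the remaining entries with $j > i$, of which we only need that they are integers. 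Consequently $\det A = (-1)^k = \pm 1$, so the rows of $A$ form a basis of $\mathbb{Z}^k$, which is exactly the property required of $\psi$.

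The argument is thus almost entirely formal once the matrix is in place, and the only genuine point requiring care is the bookkeeping of the second paragraph: one must verify that the index conventions in conditions (1)--(3) really do yield a matrix triangular with unit diagonal, rather than triangular in the opposite corner or with vanishing diagonal. I expect this to be the main, if modest, obstacle, together with confirming that each $E_j$ is nonempty so that the representatives $\rho_j$ exist; both hold under the standing hypotheses, the nonemptiness being implicit in the partition carrying functions $\varphi_i$ with $\varphi_i(E_i)\ge 1$. Note that neither convexity nor completeness of $\Sigma$ enters beyond guaranteeing these representatives, so the proof is purely lattice-theoretic.
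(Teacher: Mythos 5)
Your proof is correct and follows essentially the same route as the paper's: choose representatives $\rho_j \in E_j$, pair against them to get an integer matrix that is upper triangular with $-1$ on the diagonal by conditions (1) and (3), and conclude from unimodularity. The only (cosmetic) difference is in the final step, where the paper deduces saturation by contradiction from a hypothetical rational point of $N$ outside the $\mathbb{Z}$-span, while you package the same fact as a splitting of the exact sequence $0 \to \ker\psi \to N \to \mathbb{Z}^k \to 0$.
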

\begin{proof}
First of all, it is clear that $k \leq \rank(M)$, or else $X$ would be empty. Now for each $E_i$, choose some $\rho_i \in E_i$. We then have a map
$$
\eta: \mathrm{span}_\mathbb{Z}(v_1,\dots, v_k) \rightarrow \mathbb{Z}^k
$$
determined by 
$$
\eta(v) = (\langle v,\rho_1 \rangle, \dots, \langle v , \rho_k \rangle).
$$
Which, when expressed in terms of the basis $v_1,\dots,v_k$ is upper diagonal with $(-1)$ in each diagonal position. Thus $\eta$ is an isomorphism. If $\mathrm{span}_\mathbb{Z}(v_1,\dots,v_k)$ were not saturated, then there is some $v \in \mathrm{span}_{\mathbb{Q}}(v_1,\dots,v_k) \cap N$ which is not in $\mathrm{span}_\mathbb{Z}(v_1,\dots,v_k)$. But then $\eta(v)$ could not lie in $\mathbb{Z}^k$, which is absurd, since $v\in N$ and $\rho_i$ are elements of $M$ and by definition $\langle v, \rho_i \rangle \in \mathbb{Z}$.

Thus the embedding $\mathrm{span}_\mathbb{Z}(v_1,\dots,v_k) \hookrightarrow N$ is primitive and there is a complementary set of vectors $v_{k+1},\dots,v_n$ so that $v_1,\dots,v_n$ spans $N$ over $\mathbb{Z}$.
\end{proof}
Now we will proceed to show that amenable collections of vectors lead naturally to a specific class of complete intersection toric subvarieties of $Y_\Sigma$.
\subsection{Toric degenerations}

Now let us define a toric variety associated to an amenable collection of vectors subordinate to a $\mathbb{Q}$-nef partition of a fan $\Sigma$.

\begin{defn}
Let $V$ be an amenable collection of vectors subordinate to a $\mathbb{Q}$-nef partition of a fan $\Sigma$. Let $M_V$ be the subspace of $M \otimes_\mathbb{Z} \mathbb{R}$ composed of elements which satisfy $\langle v_i, u \rangle = 0$ for each $v_i \in V$. Define the fan $\Sigma_V$ to be the fan in $M_V$ induced by the fan $\Sigma$.
\end{defn}

We may now look at the subvariety of $Y_\Sigma$ determined by the equations  
$$
\prod_{\rho \in E_i} x_\rho - \prod_{\rho \notin E_i} x_\rho^{\langle v_i, \rho\rangle } = 0
$$
in its homogeneous coordinate ring. Note that if $X$ is a quasi-Fano complete intersection in $Y_\Sigma$ determined by a $\mathbb{Q}$-nef partition, then if the variety determined by the equations above is a degeneration of $X$.

Following \cite{fish}, we introduce a definition:
\begin{defn}
An integral square matrix is called mixed if each row contains both positive and negative entries. A $k \times m$ matrix is called mixed dominating if there is no square submatrix which is mixed.
\end{defn}
Mavlyutov (\cite{mav2} Corollary 8.3) packages Corollaries 2.4 and 2.10 of \cite{fish} into the following convenient form. If $l$ is an integral vector in a lattice expressed in terms of a fixed basis, as $l=(t_1,\dots, t_n)$ then we define monomials $x^{(l_-)}$ and $x_{(l_+)}$ to be
$$
x^{(l_+)} = \prod_{t_i > 0} x_i^{t_i}, \, \,  \, \, x^{(l_-)} = \prod_{t_i < 0} x_i^{t_i}.
$$
\begin{proposition}
Let $L = \bigoplus_{i=1}^k \mathbb{Z}l_i$ be a saturated sublattice of $\mathbb{Z}^m$ so that $L \cap \mathbb{N}^m = \{0\}$. Assume that the matrix with rows $l_1,\dots, l_k$ is mixed dominating, then the set of polynomials $(x^{(l_i)_+} - x^{(l_i)_-})$ for $i = 1,\dots, k$ forms a regular sequence in $\mathbb{C}[x_1,\dots,x_m]$
\end{proposition}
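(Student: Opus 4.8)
The plan is to reduce the assertion to a dimension bound and then a short combinatorial observation. Since $L$ is saturated, $\mathbb{Z}^m/L$ is a free abelian group of rank $m-k$, so I would grade $R = \mathbb{C}[x_1,\dots,x_m]$ by declaring $\deg x_j = [e_j] \in \mathbb{Z}^m/L$. Writing $g_i = x^{(l_i)_+} - x^{(l_i)_-}$, the relation $(l_i)_+ - (l_i)_- = l_i \in L$ shows that both monomials of $g_i$ have the same degree, so each $g_i$ is homogeneous. The hypothesis $L \cap \mathbb{N}^m = \{0\}$ is precisely the statement that this multigrading is positive with $R_0 = \mathbb{C}$: a relation $\sum_j c_j[e_j]=0$ with all $c_j \ge 0$ says $(c_j)_j \in L \cap \mathbb{N}^m = \{0\}$. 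In particular every nonzero element of $L$ has entries of both signs, so each $g_i$ is a genuine difference of two distinct nonconstant monomials and lies in the irrelevant ideal $\mathfrak{m} = R_{>0}$.

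Granting the positive grading, $R$ is a Cohen--Macaulay positively graded algebra, and a sequence of homogeneous elements of positive degree forms a regular sequence exactly when the ideal $I = (g_1,\dots,g_k)$ they generate has height equal to their number $k$ (regularity is checked after localizing at $\mathfrak m$, where the Cohen--Macaulay criterion applies, and a permutation of a regular sequence there is again regular). Krull's height theorem gives $\operatorname{ht} I \le k$ automatically, so the whole content is the reverse inequality, equivalently the bound $\dim V(I) \le m-k$ in $\mathbb{A}^m$.

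To prove this bound I would stratify $\mathbb{A}^m$ by the torus orbits $T_S = \{x : x_j \ne 0 \iff j \in S\}$ for $S \subseteq \{1,\dots,m\}$, with zero-set $Z = \{1,\dots,m\}\setminus S$, and estimate $\dim(V(I)\cap T_S)$. Restricting $g_i$ to $T_S$ there are three possibilities: if $\operatorname{supp}(l_i)\subseteq S$ it becomes the torus equation $x^{l_i}=1$; if both $(l_i)_+$ and $(l_i)_-$ meet $Z$ it restricts to $0$; and if exactly one of $(l_i)_\pm$ meets $Z$ it restricts to a single monomial that is nowhere zero on $T_S$, forcing $V(I)\cap T_S = \emptyset$. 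Hence on a nonempty stratum every row lies in $A = \{i : \operatorname{supp}(l_i)\subseteq S\}$ or in $B = \{i : l_i \text{ is mixed over } Z\}$, with $A \sqcup B = \{1,\dots,k\}$, and $V(I)\cap T_S$ is the subgroup of $T_S$ defined by $\{x^{l_i}=1 : i \in A\}$, of dimension $|S| - \operatorname{rank}\{l_i : i\in A\}$. Since $l_1,\dots,l_k$ are $\mathbb{Z}$-independent this rank is $|A| = k - |B|$, so $\dim(V(I)\cap T_S) = (m-k) - (|Z| - |B|)$, and the target bound becomes the combinatorial claim $|B| \le |Z|$.

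This last claim is where the mixed-dominating hypothesis enters, and it is the step I would treat as the crux even though it turns out to be short. The submatrix $M_{B,Z}$ with rows $B$ and columns $Z$ has, by the nonemptiness analysis, every row mixed over $Z$. If $|B| \ge |Z|$, selecting any $|Z|$ of its rows produces a square submatrix each of whose rows contains both a positive and a negative entry among the columns $Z$; this is a mixed square submatrix of $M$, contradicting the assumption that $M$ is mixed dominating. Therefore $|B| < |Z|$ on every proper stratum, while $Z = \emptyset$ gives dimension exactly $m-k$ on the big torus, so $\dim V(I) = m-k$, hence $\operatorname{ht} I = k$, and the Cohen--Macaulay criterion yields that $g_1,\dots,g_k$ is a regular sequence. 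I expect the genuine difficulty to be organizational rather than deep: arranging the orbit stratification and the attendant case analysis so that mixed-domination is exactly the property one needs. This packaging is what Corollaries 2.4 and 2.10 of \cite{fish} provide, and one could alternatively invoke them directly.
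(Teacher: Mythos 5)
Your argument is correct, but you should be aware that the paper does not actually prove this proposition: it is imported as a black box from Mavlyutov (Corollary 8.3 of \cite{mav2}), who packages Corollaries 2.4 and 2.10 of \cite{fish}. So your write-up is a self-contained replacement for a citation rather than an alternative to an argument appearing in the text, and as such it is a sound one. The route you take --- grade $\mathbb{C}[x_1,\dots,x_m]$ by $\mathbb{Z}^m/L$, use $L\cap\mathbb{N}^m=\{0\}$ to make the grading positive so that regularity of the homogeneous sequence reduces, via the Cohen--Macaulay height criterion at the irrelevant maximal ideal, to $\operatorname{ht}(g_1,\dots,g_k)=k$, and then bound $\dim V(I)$ torus orbit by torus orbit --- places the mixed-dominating hypothesis exactly where it belongs: a stratum $T_S$ with zero set $Z$ meets $V(I)$ only if every row is either supported in $S$ (contributing an independent character equation) or mixed over the columns $Z$ (contributing nothing), and a set $B$ of rows mixed over $Z$ with $|B|\ge |Z|\ge 1$ would produce a $|Z|\times|Z|$ mixed square submatrix. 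Two small points are worth making explicit: first, $L\cap\mathbb{N}^m=\{0\}$ applied to $\pm l$ forces every nonzero $l\in L$, in particular each $l_i$, to have entries of both signs, which is what makes your three-way case split exhaustive, guarantees each $g_i$ is a difference of two distinct nonconstant monomials, and incidentally forces $k<m$ (otherwise the full matrix would itself be a mixed square matrix); second, you only need the containment $V(I)\cap T_S\subseteq\{x\in T_S: x^{l_i}=1,\ i\in A\}$, whose right-hand side has dimension $|S|-|A|$ because the rows indexed by $A$ are part of a basis of $L$ and are supported in $S$. (You have also silently, and correctly, repaired the paper's typo $x^{(l_-)}=\prod_{t_i<0}x_i^{t_i}$, which must read $x_i^{-t_i}$ for the statement to concern polynomials at all.)
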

\begin{proposition}
\label{prop:Mav}
If $X$ is a Fano complete interesection in a toric variety $Y_\Sigma$, and $V$ is an amenable collection of vectors associated to $X$, then $V$ determine a degeneration of $X$ to a complete intersection toric variety in the homogenous coordinate ring of $Y_\Sigma$ cut out by equations
$$
\prod_{\rho \in E_i} x_\rho - \prod_{\rho \notin E_i} x_\rho^{\langle v_i, \rho\rangle } = 0.
$$
\end{proposition}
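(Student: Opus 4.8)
The plan is to reduce the statement to the preceding proposition on mixed dominating matrices, and then to obtain the degeneration from flatness of complete intersections of fixed multidegree. The first step is to recognize the defining binomials as coming from a sublattice of $\mathbb{Z}^{\Sigma[1]}$. Write $\iota\colon N\hookrightarrow \mathbb{Z}^{\Sigma[1]}$, $u\mapsto(\langle u,\rho\rangle)_{\rho\in\Sigma[1]}$, for the first map in the divisor sequence (injective because $\Sigma$ is complete), and set $l_i=\iota(-v_i)$. By the amenability conditions the entry of $l_i$ at $\rho$ is $+1$ when $\rho\in E_i$ (condition (1)), is $-\langle v_i,\rho\rangle\le 0$ when $\rho\in E_j$ with $j>i$ (condition (2)), and is $0$ when $\rho\in E_j$ with $j<i$ (condition (3)). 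In particular the ``negative part'' of each $l_i$ carries honest non-negative exponents, and $x^{(l_i)_+}-x^{(l_i)_-}=\prod_{\rho\in E_i}x_\rho-\prod_{\rho\notin E_i}x_\rho^{\langle v_i,\rho\rangle}$ is exactly the $i$-th defining binomial. Moreover, since $l_i$ lies in $\iota(N)=\ker\bigl(\mathbb{Z}^{\Sigma[1]}\to \mathrm{A}_{n-1}(Y_\Sigma)\bigr)$, the two monomials of this binomial share a class in $\mathrm{A}_{n-1}(Y_\Sigma)$, namely the class of $\sum_{\rho\in E_i}D_\rho=[Z_i]$; hence each binomial is homogeneous of the same degree as $f_i$.

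Next I would verify the three hypotheses of the preceding proposition for $L=\bigoplus_i\mathbb{Z}l_i$. For $L\cap\mathbb{N}^{\Sigma[1]}=\{0\}$: any element of $L$ is $\iota(u)$ with $u=-\sum a_iv_i\in N$, and if all coordinates $\langle u,\rho\rangle$ are $\ge 0$ then, since the maximal cones of the complete fan $\Sigma$ cover $M\otimes\mathbb{R}$ and each is generated by rays, $\langle u,m\rangle\ge 0$ for every $m\in M\otimes\mathbb{R}$, forcing $u=0$. For saturation of $L$ in $\mathbb{Z}^{\Sigma[1]}$, I would use the map $\eta$ of Proposition \ref{prop:basis}: choosing one ray $\rho_i\in E_i$ for each $i$ and projecting $\mathbb{Z}^{\Sigma[1]}$ onto the corresponding $k$ coordinates sends $l_i$ to $-\eta(v_i)$, and since $\eta$ is an isomorphism onto $\mathbb{Z}^k$ the vectors $\{-\eta(v_i)\}$ form a $\mathbb{Z}$-basis of $\mathbb{Z}^k$. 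Thus this projection restricts to an isomorphism $L\xrightarrow{\sim}\mathbb{Z}^k$; if $cw\in L$, writing $cw=\sum a_il_i$ and $\pi(w)=\sum b_i(-\eta(v_i))$ with $b_i\in\mathbb{Z}$ gives $a_i=cb_i$, whence $w=\sum b_il_i\in L$, so $L$ is saturated.

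The heart of the argument is the mixed dominating condition for the matrix with rows $l_1,\dots,l_k$. Ordering the columns by the blocks $E_1,\dots,E_{k+1}$, this matrix has ``staircase'' form: row $l_i$ vanishes on $E_1,\dots,E_{i-1}$, equals $+1$ on $E_i$, and is non-positive on the later blocks. I would argue by induction on the size of a hypothetical mixed square submatrix $A'$ with rows indexed by $i_1<\dots<i_r$. If every row of $A'$ were mixed, the smallest row $l_{i_1}$ would need a positive entry among the chosen columns; but the only positive entries of $l_{i_1}$ lie in $E_{i_1}$, so some chosen column $\rho^\ast\in E_{i_1}$ occurs. By condition (3) every later selected row $l_{i_s}$ ($s\ge 2$) vanishes on $E_{i_1}$, hence has a $0$ in column $\rho^\ast$; deleting row $i_1$ and column $\rho^\ast$ therefore removes neither a positive nor a negative entry from any remaining row and yields a mixed square submatrix of size $r-1$. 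Iterating reaches an impossible mixed $1\times 1$ matrix, so no mixed square submatrix exists. With all three hypotheses checked, the preceding proposition shows the binomials form a regular sequence, cutting out a codimension-$k$ complete intersection toric variety $X_0$ in $Y_\Sigma$ (the variety of the saturated lattice ideal $I_L$).

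Finally I would produce the degeneration. Since each $g_i=\prod_{\rho\in E_i}x_\rho-\prod_{\rho\notin E_i}x_\rho^{\langle v_i,\rho\rangle}$ and the corresponding $f_i$ are sections of the same line bundle on $Y_\Sigma$, the equations $h_i(t)=tf_i+(1-t)g_i$ cut out a family $\mathcal{X}\subseteq Y_\Sigma\times\mathbb{A}^1$ (equivalently a $G_\Sigma$-invariant family in the homogeneous coordinate ring) whose fiber over $t=1$ is $X$ and whose fiber over $t=0$ is $X_0$. The locus of $t$ over which these $k$ equations cut out a codimension-$k$ subscheme is open and, by the regular sequence statement just proved together with the fact that the $f_i$ define $X$, contains both $0$ and $1$; over this locus the fibers are complete intersections of constant multidegree, hence of constant Hilbert polynomial, and $\mathcal{X}$ is a complete intersection of constant fiber dimension in the Cohen--Macaulay scheme $Y_\Sigma\times\mathbb{A}^1$, so the projection to the base is flat by miracle flatness. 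This realizes $X_0$ as a flat degeneration of $X$. The main obstacle is the combinatorial mixed dominating verification; once the identification $l_i=\iota(-v_i)$ is in place the homogeneity, the condition $L\cap\mathbb{N}^{\Sigma[1]}=\{0\}$, and the saturation are comparatively routine, and the flatness is a standard constant-Hilbert-polynomial argument.
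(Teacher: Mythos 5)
Your proposal is correct and follows the same overall strategy as the paper: identify the binomials with the lattice vectors $\iota(\pm v_i)$ and invoke the Fischer--Shapiro/Mavlyutov regular-sequence criterion quoted just before Proposition \ref{prop:Mav}. Within that shared framework, though, several of your verifications differ from (and improve on) the paper's. For the mixed-dominating condition the paper uses a counting argument --- if every row of an $\ell\times\ell$ submatrix has a negative entry then the column set must consist of exactly one ray from each $E_{i_j}$, after which the row of extremal index is unmixed --- but as written it selects the \emph{smallest} index $i_1$ and asserts that row has no positive entries, which is not what that choice yields (the clean version of that argument takes the largest index, or observes the column $\rho_1$ is unmixed after transposing). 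Your inductive deletion of the row $l_{i_1}$ together with a column $\rho^\ast\in E_{i_1}$ on which all later rows vanish is a different and airtight route to the same conclusion. You also check $L\cap\mathbb{N}^{\Sigma[1]}=\{0\}$ for arbitrary elements $u=-\sum a_iv_i$ of $L$ rather than only for the generators $v_i$ as the paper does, and you supply two things the paper leaves implicit: the homogeneity of each binomial (same class as $[Z_i]$ in $\mathrm{A}_{n-1}(Y_\Sigma)$, needed for the equations to live in the Cox ring with the right degrees) and an actual flatness argument for the family $tf_i+(1-t)g_i$ via properness of $Y_\Sigma\times\mathbb{A}^1\to\mathbb{A}^1$, semicontinuity of fiber dimension, and miracle flatness --- the paper simply declares the degeneration ``clear.'' Both approaches prove the statement; yours is the more complete account, at the cost of a somewhat longer combinatorial induction where the paper's counting argument (correctly executed) finishes in one step.
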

\begin{proof}
The definition is clear, but what needs to be shown is that the subvariety of $Y_\Sigma$ determined by these equations is a complete intersection. This is equivalent to the fact that the equations given in the statement of the proposition form a regular sequence. To check this, we prove that the conditions of Proposition \ref{prop:Mav} hold. The relevant matrix is the matrix with rows
$$
(\langle v_i, \rho \rangle)_{\rho \in \Sigma[1]}.
$$
Call this matrix $T$. If we choose some $\rho_j \in E_j$ for each $1 \leq i \leq k$, then the maximal submatrix $(\langle v_i, \rho_j \rangle)$ is upper triangular with $(-1)$s on the diagonal. Thus the rows of $T$ form a saturated sublattice of $\mathbb{Z}^m$. 

Now let us choose any square submatrix of $T$, or in other words, choose a set $U$ of $\ell$ vertices of $\Sigma[1]$ and a set $V$ of $\ell$ vectors $v_i$. Then we must show that the matrix $S = (\langle v_i, \rho\rangle)_{v_i \in V, \rho \in U}$ has a row without both positive and negative entries. If the row $(\langle v_i,\rho\rangle)_{\rho \in \Sigma[1]}$ contains negative entries for $V_0 = \{v_{i_1},\dots,v_{i_m}\} \subseteq V$, then in particular for each $v_{i_j}$ there is some $\rho \in U$ contained in $E_{i_j}$. If $V_0 = V$, then it follows that for each $\rho$ is contained in a distinct $E_{i_1},\dots, E_{i_m}$. If $i_1$ is the smallest such integer and $\rho_1$ is the corresponding element of $U$, then $\langle v_{i}, \rho \rangle \leq 0$ for all $v_i \in V$, since $\langle v_i, E_j \rangle = 0$ for all $j < i$. Thus the corresponding row contains no positive entries.

Finally, if $L \cap \mathbb{N}^m$ is nonzero then there is some $v_i$ so that $\langle v_i, \rho \rangle \geq 0$ for all vertices $\rho$ of $\Sigma[1]$. If there were such a $v_i$, then all points $\rho$ of $\Sigma[1]$ would be contained in the positive half-space determined by $v_i$, contradicting the fact that $\Sigma$ is a complete fan with each cone strictly convex.

Thus applying Proposition \ref{prop:Mav}, the equations in the proposition above determine a complete intersection in the homogeneous coordinate ring of $Y_\Sigma$.
\end{proof}

\begin{proposition}
\label{prop:fan}
The subvariety of $X_{\Sigma_V}$ of $Y_\Sigma$ corresponds to the complete intersection in the coordinate ring of $Y_\Sigma$ cut out by equations
$$
\prod_{\rho \in E_i} x_\rho - \prod_{\rho \notin E_i} x_\rho^{\langle v_i, \rho\rangle } = 0.
$$ 
for $1 \leq i \leq k$.
\end{proposition}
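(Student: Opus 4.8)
The plan is to show that both the binomial complete intersection and the toric variety $X_{\Sigma_V}$ are the closure in $Y_\Sigma$ of one and the same subtorus of the big torus $T_M$, namely the subtorus $T_{M_V}$ whose cocharacter lattice is $M_V\cap M$. Write $W$ for the subscheme of $Y_\Sigma$ cut out by the binomials $\prod_{\rho\in E_i}x_\rho-\prod_{\rho\notin E_i}x_\rho^{\langle v_i,\rho\rangle}$; by Proposition \ref{prop:Mav} these form a regular sequence, so $W$ is a complete intersection toric subvariety of $Y_\Sigma$ of codimension $k$. First I would compute $W$ on the open torus, then identify its closure with $X_{\Sigma_V}$ via the standard description of subtorus closures in terms of the induced fan.

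First I would restrict to the big torus $T_M\subseteq Y_\Sigma$. Dualising the sequence $0\to N\to \mathbb{Z}^{\Sigma[1]}\to \mathrm{A}_{n-1}(Y_\Sigma)\to 0$ exhibits the quotient map $(\mathbb{C}^\times)^{\Sigma[1]}\to T_M$ under which the character $\chi^{v_i}$ of $T_M=\Hom(N,\mathbb{C}^\times)$ pulls back to the Laurent monomial $\prod_{\rho}x_\rho^{\langle v_i,\rho\rangle}$. Since $\langle v_i,\rho\rangle=-1$ for $\rho\in E_i$ and $\langle v_i,\rho\rangle=0$ for $\rho\in E_j$ with $j<i$, on the locus where every $x_\rho\neq 0$ the $i$-th binomial vanishes precisely when $\prod_\rho x_\rho^{\langle v_i,\rho\rangle}=1$, that is, when $\chi^{v_i}=1$. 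Hence $W\cap T_M=\{t\in T_M:\chi^{v_i}(t)=1,\ 1\le i\le k\}$. By Proposition \ref{prop:basis} the vectors $v_1,\dots,v_k$ span a saturated sublattice $N_V$ of $N$, so this common kernel is a connected subtorus with character lattice $N/N_V$ and hence cocharacter lattice $(N/N_V)^\vee=\{u\in M:\langle v_i,u\rangle=0\}=M_V\cap M$; this is exactly $T_{M_V}$.

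It then remains to identify the closure $\overline{T_{M_V}}$ in $Y_\Sigma$ with $X_{\Sigma_V}$. On one hand, $W$ is a complete intersection, hence pure of codimension $k$ and Cohen--Macaulay, and it contains the irreducible $(n-k)$-dimensional torus $T_{M_V}$ densely; I would argue $W=\overline{T_{M_V}}$. On the other hand, the inclusion of the saturated cocharacter lattice $M_V\cap M\hookrightarrow M$ together with the induced fan $\Sigma_V=\{\sigma\cap M_V:\sigma\in\Sigma\}$ realises $X_{\Sigma_V}$ as precisely the closure of $T_{M_V}$ in $Y_\Sigma$, by the standard description of subtorus closures in toric varieties (\cite{cls}). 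Comparing the two descriptions gives $W=X_{\Sigma_V}$.

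The main obstacle is this last identification as \emph{schemes} rather than merely on the dense torus: I must rule out spurious components of $W$ supported in the toric boundary $\bigcup_\rho\{x_\rho=0\}$ and check that the binomial ideal and the lattice ideal of $T_{M_V}$ cut out the same subscheme along each stratum. I would handle this chart-by-chart: on the affine chart $U_\sigma$ attached to a cone $\sigma\in\Sigma$, I would restrict the binomials and verify directly that they define $X_{\Sigma_V}\cap U_\sigma$, using that the complete intersection is unmixed (so no embedded or lower-dimensional components appear) together with the fact that the two subschemes already agree on the torus. Once the charts are matched, the global equality $W=X_{\Sigma_V}$ follows by gluing.
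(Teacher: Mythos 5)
Your proof takes essentially the same route as the paper's: dualize the Cox exact sequence, observe that on the big torus the binomials become $\chi^{v_i}=1$ and hence cut out the subtorus $T_{M_V}$ with cocharacter lattice $M_V\cap M$, and identify $X_{\Sigma_V}$ with the closure of that subtorus. The only difference is that you explicitly flag and sketch the verification that the binomial complete intersection has no spurious components in the toric boundary (using unmixedness and a chart-by-chart check), a point the paper's proof passes over silently after identifying the torus part; this is added care rather than a different method.
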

\begin{proof}
We recall that there is an exact sequence
$$
0 \rightarrow N \xrightarrow{g} \mathbb{Z}^{\Sigma[1]} \rightarrow \mathrm{A}_{n-1}(X_\Sigma) \rightarrow 0.
$$
Here $g$ is the map which sends a point $v \in N$ to the point 
$$
(\langle v, \rho \rangle)_{\rho \in \Sigma[1]} \in \mathbb{Z}^{\Sigma[1]}
$$
Applying the functor $\Hom( - ,\mathbb{C}^\times)$ to this exact sequence, we obtain a dual exact sequence
$$
 0 \rightarrow G_\Sigma \rightarrow (\mathbb{C}^\times)^{\Sigma[1]} \xrightarrow{g^*} M \otimes_\mathbb{Z} \mathbb{C}^\times \rightarrow 0
$$
where $G_\Sigma = \Hom(\mathrm{A}_{n-1}(X_\Sigma),\mathbb{C}^\times)$. For an appropriate choice of basis $(x_\rho)_{\rho \in \Sigma[1]}$ the induced action of $G_\Sigma$ on $(\mathbb{C}^\times)^{\Sigma[1]} \subseteq V_\Sigma$ determines the $G_\Sigma$-action on the homogeneous coordinate ring of $X_\Sigma$. 

The equations by which we have defined $X_\Sigma$ can be written on the torus $(\mathbb{C}^\times)^{\Sigma[1]}$ as 
$$
\prod_{\rho \in \Sigma[1]} x_\rho^{\langle v_i, \rho \rangle} = 1
$$
for $1 \leq i \leq k$. But this corresponds exactly to the pullback of the locus $\langle v_i, - \rangle = 0$ in $M$, which is simply the subspace $M_V$. Thus, in the homogeneous coordinate ring of $Y_{\Sigma}$, the toric subvariety $X_{\Sigma_V}$ is cut out by the equations given in the proposition.
\end{proof}

Thus the amenable collection $V$ determines a degeneration of a quasi-Fano complete intersection in $Y_\Sigma$ to a toric variety $X_{\Sigma_V}$ where $\Sigma_V$ is the fan obtained by intersecting $\Sigma$ with the subspace of $M$ orthogonal to elements of $V$.

\begin{defn}
A toric degeneration $X \rightsquigarrow X_\Sigma$ of a quasi-Fano complete intersection determined by an amenable collection of vectors subordinate to a $\mathbb{Q}$-nef partition $E_1,\dots, E_{k+1}$ is called an amenable toric degeneration of $X$ subordinate to the $\mathbb{Q}$-nef partition $E_{1},\dots, E_{k+1}$. 
\end{defn}

Now we define a polytope depending upon the amenable collection of vectors $V = \{v_1,\dots, v_{k}\}$.
\begin{defn}
Let $V$ be an amenable collection of vectors subordinate to a $\mathbb{Q}$-nef partition $E_1,\dots, E_{k+1}$ equipped with rational convex $\Sigma$-piecewise linear functions $\varphi_1,\dots,\varphi_{k+1}$. Then we define $\Delta_V$ to be the polytope defined by $\rho \in M \otimes \mathbb{R}$ satisfying equations
\begin{align*}
\langle v_i , \rho \rangle &= 0 \text{ for } 1 \leq i \leq k \\
\varphi_{k+1}(\rho) & \leq 1.
\end{align*}
\end{defn}
This polyhedron is convex. We will refer to the subspace of $M \otimes_\mathbb{Z} \mathbb{R}$ satisfying $\langle v_i, \rho \rangle = 0$ for $1 \leq i \leq k$ for $V = \{v_1,\dots, v_k\}$ an amenable collection of vectors as $M_V$. 

It is first of all, important to show that $\Delta_V$ is precisely the polytope whose vertices are the generating rays of the fan $M_V \cap \Sigma_\Delta$.
\begin{lemma}
\label{lemma:Dk+1}
Let $C$ be a sub-cone of $\Sigma$ so that $C \cap M_V$ is nonempty, then there is a vertex of $C$ which is contained in $E_{k+1}$.
\end{lemma}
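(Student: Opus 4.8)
The plan is to argue by contradiction, extracting from a point of $C \cap M_V$ a ray generator lying in $E_{k+1}$ by means of a downward induction on the index of the nef partition. First I would read the hypothesis ``$C \cap M_V$ is nonempty'' as supplying a \emph{nonzero} vector $u \in C \cap M_V$, and expand it in the primitive generators as $u = \sum_{\rho \in C[1]} a_\rho \rho$ with all $a_\rho \geq 0$. I then pass to the support $J = \{\rho \in C[1] : a_\rho > 0\}$ of the rays that actually occur; since $u \neq 0$ we have $J \neq \emptyset$, and this restriction is exactly what will later let me turn the vanishing of a partial weight-sum into a genuine emptiness statement. Assume, toward a contradiction, that no element of $C[1]$ lies in $E_{k+1}$, so every $\rho \in J$ belongs to some $E_m$ with $m \leq k$.

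The key computation is to feed the relation $u \in M_V$, i.e. $\langle v_i, u \rangle = 0$, into the three defining conditions of an amenable collection. Writing $A_i = \sum_{\rho \in J \cap E_i} a_\rho$ for the total weight landing in $E_i$, condition (3) kills every $\rho \in E_m$ with $m < i$, condition (1) contributes exactly $-A_i$ from $E_i$, and condition (2) contributes a quantity $B_i = \sum_{m>i}\sum_{\rho \in J \cap E_m} a_\rho \langle v_i, \rho \rangle \geq 0$. Hence $0 = \langle v_i, u\rangle = -A_i + B_i$, so that $A_i = B_i \geq 0$ for every $i$, where crucially $B_i$ involves only the weights sitting in the strictly higher pieces $E_{i+1}, \dots, E_{k+1}$.

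Now I would run a downward induction on $i$ from $k$ to $1$. At the base $i = k$ the sum $B_k$ ranges only over $E_{k+1}$, which meets $J$ in nothing by assumption, so $B_k = 0$ and therefore $A_k = 0$; since every weight in $J$ is positive, $A_k = 0$ forces $J \cap E_k = \emptyset$. Inductively, once $J \cap E_m = \emptyset$ for all $m$ with $i < m \leq k$ (together with $J \cap E_{k+1} = \emptyset$), every term of $B_i$ vanishes, giving $A_i = 0$ and hence $J \cap E_i = \emptyset$. Carrying this down to $i = 1$ shows that $J$ meets none of $E_1, \dots, E_{k+1}$; as these sets partition $\Sigma[1] \supseteq J$, this forces $J = \emptyset$, contradicting $u \neq 0$. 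Thus some generator of $C$ lies in $E_{k+1}$.

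I expect the only real subtlety to be the bookkeeping that makes the induction airtight: one must restrict attention to the support $J$ of positive coefficients (equivalently, work in the relative interior of the sub-cone of $C$ spanned by the occurring rays) so that a vanishing weight-sum $A_m = 0$ genuinely means that no ray of that piece occurs, rather than merely that its coefficients happen to cancel. With that restriction in place the relations coming from conditions (1)--(3) telescope cleanly, and notably no use of completeness or strict convexity of $\Sigma$ is required beyond the fact that $E_1,\dots,E_{k+1}$ partition $\Sigma[1]$.
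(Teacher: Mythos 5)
Your proof is correct and rests on the same mechanism as the paper's: expand a nonzero point of $C\cap M_V$ as a non-negative combination of the ray generators of $C$ and play the triangular sign structure of the amenable collection against the orthogonality conditions $\langle v_i,u\rangle=0$. The only difference is presentational — you run a full downward induction showing every $E_i$ misses the support, whereas the paper gets the conclusion in one step by pairing with $v_j$ for the single largest index $j\le k$ whose piece $E_j$ meets the support.
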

\begin{proof}
Let $p$ be an element of $(C \cap M_V) \cap M$ and choose a set of vectors $U = \{u_1,\dots, u_m\}$ contained in generating set of the 1-dimensional strata of $C$ so that $p$ is a strictly positive rational linear combination of a set of vectors in $U$. Let $j$ be the largest integer so that $E_j \cap U\neq \emptyset$ and $j \neq k+1$. If no such integer exists, then $U \subseteq E_{k+1}$ and we are done. If not, we have that $\langle v_j, u \rangle = 0$ or $(-1)$ for all $u \in U \setminus E_{k+1}$, since $\langle v_j, E_i \rangle =0$ for $i < j$. If $p' = \sum_{u_i\in U \setminus E_{k+1}} a_i u_i$ for positive numbers $a_i$, then $\langle v_j, p' \rangle = -\sum_{u_i \in E_j}a_i < 0$, since $E_j \cap U$ is nonempty. Thus, since $p = p'+ \sum_{u_i \in U \cap E_{k+1}}a_iu_i$ and $\langle v_j,p \rangle = 0$, we must have $U \cap E_{k+1}$ nonempty.
\end{proof}

The following proposition holds in the case where $E_1,\dots, E_{k+1}$ is any $\mathbb{Q}$-nef partition of $\Delta$ and $V$ is amenable collection of vectors subordinate to this nef partition.

\begin{proposition}
\label{prop:phi1}
Let $C$ be a minimal sub-cone of $\Sigma$ so that $C \cap M_V$ is 1-dimensional, then there is some point $\rho$ in $(C\cap M_V) \cap M$ so that $\varphi_{k+1}(\rho) = 1$.
\end{proposition}
\begin{proof}
By Lemma \ref{lemma:Dk+1}, we may deduce that the set $C[1]$ of primitive integral ray generators of $C$ must contain an element of $E_{k+1}$. We must show that there is some $p \in E_{k+1}$ and $u_1,\dots, u_m \in C[1] \setminus (C[1] \cap E_{k+1})$ so that $\rho = (p + \sum_{i=1}^mn_iu_i) \in M_V$, $n_i >0$. Then since $u_i \in \Sigma[1] \setminus E_{k+1}$ and $\varphi_{k+1}$ is linear on $C$, we must have $\varphi_{k+1}(\rho) = 1$.

Let us take $p$ in $E_{k+1} \cap C[1]$. Then assume $\langle v_j, p \rangle = n_j > 0$. If there is no $u \in C[1]$ so that $u \in E_j$ then $\langle v_j, u \rangle \geq 0$ for all $u \in C[1]$. The subset $\langle v_j, u \rangle =0$ contains $C \cap M_V$ by definition and must be a sub-stratum of $C$ since $C$ is a convex rational cone. By minimality of $C$, we must then have $\langle v_j, u \rangle = 0$ for all $u \in C[1]$. In particular, $n_j =0$ for all $j$ so that $E_j \cap C[1] = \emptyset$.

Now we will fix $u_j \in E_j$ for each $E_j$ so that $E_j \cap C[1] \neq \emptyset$. We know that $\langle v_j, u_j \rangle = -1$. Take the largest $j$ so that $E_j \cap C[1] \neq \emptyset$. Then $\langle v_j, p + n_ju_j \rangle = 0$. Now we have that $p + n_ju_j$ is orthogonal to $v_j$ and since it is a positive sum of elements in $C[1]$, it is contained in $C$. Let $j'$ be the next smallest integer so that $E_{j'} \cap C[1] \neq \emptyset$. Then let $\langle v_{j'}, p + n_ju_j \rangle = s_{j'}$. This is a non-negative integer since $p$ and $u_j$ are not contained in $E_{j'}$. We now have $\langle v_{j'},p + n_ju_j + s_{j'}u_{j'}\rangle = 0$ and $\langle v_j, p + n_ju_j + s_{j'}u_{j'} \rangle = s_{j'}\langle v_j, u_{j'} \rangle$ which is zero by the condition that $\langle v_j ,\Delta_i \rangle =0$ for $i < j$.

We may now sequentially add positive multiples of each $u_\ell$ for $E_\ell \cap C[1] \neq \emptyset$ in the same way until the resulting sum $\rho$ is orthogonal to $v_1,\dots, v_{k}$. Thus we obtain $\rho \in C$ which lies in $M_V$ and satisfies $\varphi_{k+1}(\rho) = 1$ by arguments presented in the first two paragraphs of this proof.
\end{proof}

We may make an even stronger claim if we make further assumptions on the divisors associated to $E_i$.

Recall that we have been assuming that the Weil divisors $D_{i} = \sum_{\rho\in E_{i}} D_\rho$ are $\mathbb{Q}$-Cartier, or in other words, there are rational convex $\Sigma$-linear functions $\varphi_i$ and for $\rho \in D_j$, $\varphi_i(\rho) = \delta_{ij}$. We can make stronger statements about the relationship between the fan $\Sigma_V$ and the polytope $\Delta_V$.

\begin{proposition}
\label{prop:primitive}
The point $\rho$ in the proof of Proposition \ref{prop:phi1} is a primitive lattice point under either of the following two conditions:
\begin{enumerate}
\item The divisor $D_{k+1}$ is Cartier, or
\item All divisors $D_{i}$ for $1 \leq i \leq k$ are Cartier.
\end{enumerate}
\end{proposition}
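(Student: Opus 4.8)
The plan is to reduce the statement to a divisibility argument: it suffices to show that whenever $\rho = a\rho_0$ with $\rho_0 \in M$ and $a \in \mathbb{Z}_{\geq 1}$, one necessarily has $a = 1$. Recall from the proof of Proposition \ref{prop:phi1} that $\rho = p + \sum_\ell s_\ell u_\ell$, where $p \in E_{k+1} \cap C[1]$ and each $u_\ell \in E_\ell \cap C[1]$ with $\ell \leq k$, that $p$ and the $u_\ell$ are primitive ray generators of $C$, and that $\varphi_{k+1}(\rho) = 1$. Since $C$ is a cone and $a > 0$, the point $\rho_0 = \rho/a$ again lies in $C$; as each support function is linear on $C$, this gives $\varphi_i(\rho_0) = \varphi_i(\rho)/a$ for every $i$.

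For the first case I would simply invoke that $D_{k+1}$ Cartier means $\varphi_{k+1}$ takes integral values on all of $M$. As $\rho_0 \in M \cap C$, this yields $\varphi_{k+1}(\rho_0) = 1/a \in \mathbb{Z}$, which forces $a = 1$; equivalently, the integral vector cutting out $\varphi_{k+1}$ on $C$ pairs to $1$ with $\rho$, exhibiting $\rho$ as primitive.

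The second case is the genuine obstacle, because here no integral functional directly reads off the value $\varphi_{k+1}(\rho) = 1$, and since $C$ need not be smooth, primitivity of $\rho$ in the sublattice generated by $C[1]$ (which is immediate, as $p$ occurs with coefficient $1$) does not by itself give primitivity in $M$. The remedy is to use the integral support functions $\varphi_1, \dots, \varphi_k$ to clear the $u_\ell$-coefficients. Since $\varphi_i(p) = 0$ and $\varphi_i(u_\ell) = \delta_{i\ell}$ on ray generators, linearity on $C$ gives $\varphi_i(\rho) = s_i$, hence $\varphi_i(\rho_0) = s_i/a$. As each $D_i$ is Cartier, $\varphi_i$ is integral on $M$ and $\rho_0 \in M$, so $a \mid s_i$ for every $i$. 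Consequently $\sum_\ell (s_\ell/a)\,u_\ell \in M$, and therefore
\[
\frac{1}{a}\,p = \rho_0 - \sum_\ell \frac{s_\ell}{a}\,u_\ell \in M.
\]
Since $p$ is a primitive element of $M$, this is possible only when $a = 1$, completing the argument. I expect the only points needing care to be the bookkeeping identity $\varphi_i(\rho) = s_i$ and the verification that the quotients $s_\ell/a$ are integers; the decisive idea is that integrality of $\varphi_1, \dots, \varphi_k$ reduces primitivity of $\rho$ to that of the single ray generator $p$.
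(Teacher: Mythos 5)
Your proof is correct and follows essentially the same route as the paper's: case (1) uses integrality of $\varphi_{k+1}$ together with $\varphi_{k+1}(\rho)=1$, and case (2) applies the integral support functions $\varphi_1,\dots,\varphi_k$ to $\rho/a$ to force $a \mid s_i$ and then reduces to primitivity of $p$. Your version is slightly more explicit about the bookkeeping identity $\varphi_i(\rho)=s_i$, but the argument is the same.
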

\begin{proof}
For (1), If $D_{k+1}$ is Cartier, then the function $\varphi_{k+1}$ is integral, and thus if $\varphi_{k+1}(\rho) =1$ implies that $\rho$ is a primitive lattice point. 

In (2), assume that there is some $r$ so that $\rho/r$ is still in $M$. Recall that $\rho = p + \sum_{i=1}^kn_iu_i$ for some $u_i \in E_i$. Thus $\varphi_i(\rho/r) = n_i/r$ is an integer, since $\varphi_i$ is integral and $\rho/r$ is in $M$. Hence $\rho/r - \sum_{i=1}^k (n_i/r)u_i = p/r$ is in $M$. Since $p \in \Sigma[1]$ are assumed to be primitive, $r=1$.
\end{proof}

Finally, this shows that
\begin{corollary}
\label{cor:DeltaV}
Under either of the conditions of Proposition \ref{prop:primitive}, the polytope $\Delta'$ in $M_V$ obtained as the convex hull of the rays generating $ \Sigma_V = M_V \cap \Sigma$ is equal to $\Delta_V$.
\end{corollary}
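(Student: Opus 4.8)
The plan is to prove $\Delta' = \Delta_V$ by establishing the two inclusions, after first pinning down the primitive ray generators of the fan $\Sigma_V = M_V \cap \Sigma$. The crucial point is that Propositions \ref{prop:phi1} and \ref{prop:primitive} together identify these generators explicitly. Every ray of $\Sigma_V$ is of the form $C \cap M_V$ for a minimal cone $C$ of $\Sigma$ with $C \cap M_V$ one-dimensional, and by Proposition \ref{prop:phi1} this ray contains a lattice point $\rho$ with $\varphi_{k+1}(\rho) = 1$, which by Proposition \ref{prop:primitive} (valid under either hypothesis of the corollary) is primitive. Since $\varphi_{k+1}$ is linear along the ray, such a point is unique, and it is precisely the primitive generator of the ray. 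Thus the vertices of $\Delta'$ are exactly the lattice points $\rho$ with $\varphi_{k+1}(\rho) = 1$ lying on the rays of $\Sigma_V$.

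For the inclusion $\Delta' \subseteq \Delta_V$, I would simply observe that each such generator $\rho$ satisfies $\rho \in M_V$ and $\varphi_{k+1}(\rho) = 1 \leq 1$, so $\rho \in \Delta_V$; since $\Delta_V$ is convex, it contains the convex hull $\Delta'$ of these generators.

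The reverse inclusion $\Delta_V \subseteq \Delta'$ carries the real content. First I would record that $\Sigma_V$ is a complete fan in $M_V$: because $\Sigma$ is complete in $M \otimes_{\mathbb{Z}} \mathbb{R}$, every point of $M_V$ lies in some cone $C$ of $\Sigma$, hence in $C \cap M_V$, so the cones of $\Sigma_V$ cover $M_V$; in particular the origin lies in the interior of $\Delta'$. Now take $q \in \Delta_V$, so $q \in M_V$ and $\varphi_{k+1}(q) \leq 1$, and let $\sigma = C \cap M_V$ be a cone of $\Sigma_V$ containing $q$, with primitive generators $\rho_1,\dots,\rho_s$. Writing $q = \sum_j a_j \rho_j$ with $a_j \geq 0$, and using that $\varphi_{k+1}$ is linear on $C$ (hence on $\sigma$) together with $\varphi_{k+1}(\rho_j) = 1$, I obtain $\varphi_{k+1}(q) = \sum_j a_j$. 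The constraint $\varphi_{k+1}(q) \leq 1$ then forces $\sum_j a_j \leq 1$, so $q$ is a convex combination of $0,\rho_1,\dots,\rho_s$. Since $0 \in \Delta'$ and each $\rho_j \in \Delta'$, we conclude $q \in \Delta'$.

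The main obstacle I anticipate is bookkeeping rather than any deep difficulty: one must check that the generators of a cone $\sigma$ of $\Sigma_V$ are exactly the points $\rho$ produced by Proposition \ref{prop:phi1} for the various minimal subcones, and that $\varphi_{k+1}$ — a priori only piecewise linear on $M_V$ — is genuinely linear on each cone $\sigma = C \cap M_V$ precisely because it is linear on the ambient cone $C$ of $\Sigma$. Once these compatibilities are in place the two inclusions close up immediately. The hypotheses of Proposition \ref{prop:primitive} enter only to guarantee that the distinguished points $\rho$ are lattice-primitive, so that $\Delta'$, being built from primitive generators, has exactly these points as its vertices rather than proper multiples of them.
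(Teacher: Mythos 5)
Your proof is correct, and it rests on the same two pillars as the paper's: Propositions \ref{prop:phi1} and \ref{prop:primitive} are used to identify the primitive generators in $\Sigma_V[1]$ as exactly the lattice points of $M_V$ at height $\varphi_{k+1}=1$, and the inclusion $\Delta' \subseteq \Delta_V$ then follows by convexity exactly as in the paper. Where you diverge is in the reverse inclusion. The paper argues vertex-by-vertex: it takes a vertex $\rho$ of $\Delta_V$, uses the linearity of $\varphi_{k+1}$ on the cone of $\Sigma_V$ whose relative interior contains $\rho$ to force that cone to be a ray, and concludes that every vertex of $\Delta_V$ lies in $\Sigma_V[1]$; this implicitly uses that $\Delta_V$ is bounded and is the convex hull of its vertices. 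You instead take an arbitrary point $q \in \Delta_V$, locate it in a single cone $\sigma = C \cap M_V$ of the (complete) fan $\Sigma_V$, and use $\varphi_{k+1}(q)=\sum_j a_j \le 1$ to exhibit $q$ as a convex combination of the origin and the primitive generators of $\sigma$. Your version is slightly more self-contained --- it does not presuppose boundedness of $\Delta_V$ or appeal to its vertex structure, and it makes explicit both the completeness of $\Sigma_V$ and the reason $0 \in \Delta'$ --- at the modest cost of the bookkeeping you flag (that the extreme rays of $C\cap M_V$ are themselves cones of the induced fan $\Sigma_V$, so that their primitive generators really are among the height-one points produced by Proposition \ref{prop:phi1}). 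Both arguments are sound and consume Propositions \ref{prop:phi1} and \ref{prop:primitive} in the same way.
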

\begin{proof}
By Proposition \ref{prop:phi1}, each generating ray of $\Delta'$ lies inside of $\Delta_V$, thus the convex hull of the generating rays of $\Sigma_V$ is contained inside of $\Delta_V$. If $\rho$ is a vertex of $\Delta_V$, then let $C$ in $\Sigma$ be the unique cone containing $\rho$ on its interior, $C^0$. If $C_V = C \cap M_V$ then $\rho$ is in $C_V^0$, the interior of $C_V$. Since $\varphi_{k+1}$ is linear on $C_V^0$, we must have some substratum of $\Delta_V$ containing $\rho$ on which $\varphi_{k+1}$ is a linear function, but since $\rho$ is a vertex of $\Delta_V$, the only such substratum is spanned by $\rho$ itself. Thus $C \cap M_V$ is the ray generated by $\rho$ and $\rho$ is in $\Sigma_V[1]$. Therefore all vertices $\rho$ of $\Delta_V$ are in $\Sigma_V[1]$, and hence are primitive, so we can conclude that $\Delta' \subseteq \Delta_V$. 
\end{proof}

It is well known (see e.g. Remark 1.3 of \cite{bn}) that if all facets of an integral polytope $\Delta$ are of integral height $1$ from the origin, then $\Delta$ is reflexive. Thus:

\begin{theorem}
\label{cor:Degen}
Let $X$ be a quasi-Fano complete intersection in a toric variety $Y_\Sigma$, and let $E_1,\dots, E_{k+1}$ be a $\mathbb{Q}$-nef partition of $\Sigma[1]$ so that $E_{k+1}$ corresponds to a nef Cartier divisor. If $X \rightsquigarrow X_{\Sigma_V}$ is defined by an amenable collection of vectors $V$ subordinate to this nef partition, then $X_{\Sigma_V}$ is a weak Fano partial crepant resolution of a Gorenstein Fano toric variety $X_{\Delta_V}$.
\end{theorem}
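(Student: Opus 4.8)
The plan is to reduce everything to the combinatorics of the fan $\Sigma_V = \Sigma \cap M_V$ and the polytope $\Delta_V$, using the hypothesis that $E_{k+1}$ is nef and Cartier to place ourselves in case (1) of Proposition \ref{prop:primitive}. Since $D_{k+1}$ is Cartier the support function $\varphi_{k+1}$ is integral, and since $D_{k+1}$ is nef it is upper convex, so on each maximal cone $C$ of $\Sigma$ we have $\varphi_{k+1} = \langle u_C, - \rangle$ for a lattice vector $u_C \in N$, with $\varphi_{k+1} = \max_C \langle u_C, - \rangle$ globally. In case (1), Corollary \ref{cor:DeltaV} identifies $\Delta_V$ with the convex hull of the primitive ray generators of $\Sigma_V$, so $\Delta_V$ is an integral polytope whose vertices lie in $\Sigma_V[1]$, and the completeness of $\Sigma$ places the origin in its interior. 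It then suffices to prove two things: that $\Delta_V$ is reflexive, so that $X_{\Delta_V}$ is Gorenstein Fano, and that the induced morphism $X_{\Sigma_V} \to X_{\Delta_V}$ is a crepant refinement with $X_{\Sigma_V}$ weak Fano.

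For reflexivity I would invoke the criterion of \cite{bn}: it is enough to check that each facet of $\Delta_V$ lies at integral lattice height $1$ from the origin with respect to the saturated lattice $M \cap M_V$. A facet $F$ of $\Delta_V$ is cut out on some full-dimensional intersection $C \cap M_V$, where $\varphi_{k+1} = \langle u_C, - \rangle$; by upper convexity $\langle u_C, \rho \rangle \le \varphi_{k+1}(\rho) \le 1$ for all $\rho \in \Delta_V$, so $F \subseteq \{ \langle u_C, - \rangle = 1 \}$ is a supporting hyperplane of $\Delta_V$. The restriction $\bar u_C$ of $u_C$ to $M \cap M_V$ lands in the dual lattice $N_V$ and is the facet normal of $F$; it remains to see that $\bar u_C$ is primitive. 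This is exactly where Proposition \ref{prop:phi1} and case (1) of Proposition \ref{prop:primitive} enter: the facet contains a primitive lattice point $\rho$ with $\varphi_{k+1}(\rho) = \langle u_C, \rho \rangle = 1$, so $\bar u_C$ takes the value $1$ on $M \cap M_V$ and cannot be a proper multiple of another functional. Hence every facet is at height $1$ and $\Delta_V$ is reflexive, so $X_{\Delta_V}$ is a Gorenstein Fano toric variety.

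For the resolution statement, I would first check that $\Sigma_V$ refines the face fan $\Sigma_{\Delta_V}$. On a maximal cone $C$ the rays of $C \cap M_V$ all satisfy $\langle u_C, - \rangle = 1$ by Proposition \ref{prop:phi1}, so they lie in the single facet of $\Delta_V$ supported by $\{\langle u_C, - \rangle = 1\}$; therefore $C \cap M_V$ sits inside the cone over that facet, which is a cone of $\Sigma_{\Delta_V}$, and passing to faces shows $\Sigma_V$ subdivides $\Sigma_{\Delta_V}$. This yields a proper birational toric morphism $\pi : X_{\Sigma_V} \to X_{\Delta_V}$. Because $\Delta_V$ is reflexive, the anticanonical support function of $X_{\Delta_V}$ takes the value $1$ on $\partial \Delta_V$, and by Proposition \ref{prop:phi1} together with Proposition \ref{prop:primitive} every ray generator of $\Sigma_V$ is a primitive lattice point lying on $\partial \Delta_V$; hence each exceptional ray has vanishing discrepancy, $\pi$ is crepant, and $-K_{X_{\Sigma_V}} = \pi^\ast(-K_{X_{\Delta_V}})$. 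As the pullback of an ample divisor under a proper birational morphism, this anticanonical class is nef and big, so $X_{\Sigma_V}$ is weak Fano and is a partial crepant resolution of $X_{\Delta_V}$.

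The main obstacle, and the point that forces the Cartier hypothesis on $E_{k+1}$, is the primitivity of the facet normals $\bar u_C$ in the sublattice $N_V$: without integrality of $\varphi_{k+1}$ one only controls heights rationally and the height-$1$ reflexivity criterion of \cite{bn} need not apply. All remaining points, namely integrality of the vertices, the origin lying in the interior, and the refinement $\Sigma_V \to \Sigma_{\Delta_V}$, are bookkeeping consequences of Corollary \ref{cor:DeltaV} and the linearity of $\varphi_{k+1}$ on the cones of $\Sigma$, and I would present them only briefly.
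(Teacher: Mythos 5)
Your proof is correct and follows essentially the same route as the paper's: identify the convex hull of $\Sigma_V[1]$ with $\Delta_V$ via Corollary \ref{cor:DeltaV}, deduce reflexivity from the integral-height-one criterion of \cite{bn}, and conclude that $X_{\Sigma_V}$ is a crepant partial resolution of the Gorenstein Fano variety $X_{\Delta_V}$. The only difference is that you unpack the final step (the refinement of fans, the vanishing discrepancies, and nef-and-bigness of the pulled-back anticanonical class) explicitly, where the paper simply cites Lemma 11.4.10 of \cite{cls}; your added detail on the primitivity of the facet normals $\bar u_C$ is a correct and welcome elaboration of the appeal to Remark 1.3 of \cite{bn}.
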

\begin{proof}
The polytope over the ray generators of $\Sigma_V$ is $\Delta_V$ by Corollary \ref{cor:DeltaV}, which is reflexive by Remark 1.3 of \cite{bn}. It follows that the fan $\Sigma_V$ is a refinement of the fan over faces of $\Delta_V$ obtained without adding rays which are not generated by points in $\Delta_V$. By Lemma 11.4.10 of \cite{cls}, it follows that $X_{\Sigma_V}$ is a crepant partial resolution of $X_{\Delta_V}$ whose anticanonical model is $X_{\Delta_V}$, thus is weak Fano.
\end{proof}

\subsection{Laurent polynomials}
\label{sect:laurent}
We now construct the Givental Landau-Ginzburg model of $X$ as follows \footnote{Our construction of the Landau-Ginzburg model is superficially different from Givental's construction, but the two constructions agree whenever Givental's procedure can be carried out.}. Let us take the usual Laurent polynomial ring in $n$ variables, $\mathbb{C}[x_1^{\pm},\dots,x_n^{\pm}]$ and let $x^\rho$ be elements of the Laurent polynomial ring associated to $\rho \in M$. In particular, if $u_1,\dots,u_n$ is a basis for $N$, then we write
$$
x^\rho = \prod_{i=1}^n x_i^{\langle u_i, \rho \rangle}.
$$
The Givental Landau-Ginzburg model associated to a nef partition $E_1,\dots, E_{k+1}$ of $\Sigma[1]$ is given by the complete intersection $X^\vee$ in $(\mathbb{C}^\times)^n$ written as
\begin{equation}
\label{eq:super}
\sum_{\rho \in E_i} a_\rho x^\rho  = 1
\end{equation}
for $1 \leq i \leq k$ equipped with the superpotential
$$
w = \sum_{\rho \in E_{k+1}} a_\rho x^\rho.
$$
Here $a_\rho$ are constants in $\mathbb{C}^\times$. To be completely correct, the constants $a_\rho$ should be chosen to correspond to complexified classes in the nef cone of $X_\Sigma$. In other words, there should be some integral $\Sigma$-piecewise linear function $\varphi$ and a complex constant $t$ so that 
$$
a_\rho = t^{\varphi(\rho)}.
$$
(see \cite{asgrmo} or \cite{batfan} for details).

The goal of this section is to show that the existence of an amenable collection of vectors subordinate to the $\mathbb{Q}$-nef partition $E_1,\dots, E_{k+1}$ implies that $X^\vee$ is birational to $(\mathbb{C}^\times)^{n-k}$ and that under this birational map, the superpotential $w$ pulls back to a Laurent polynomial. In Section \ref{sect:polytopes} we will determine the relationship between the Laurent polynomial $w$ and the polytope $\Delta_V$.

Let $v_1,\dots, v_{k}$ be an amenable collection of vectors. Then by Proposition \ref{prop:basis}, we may extend $v_1,\dots, v_{k}$ to a basis $v_1,\dots v_n$ of $N$. Let us fix such a basis once and for all. 

Now we can rewrite Equation \ref{eq:super} in terms of this basis as
$$
\sum_{\rho \in E_i} a_\rho x_\rho  = \sum_{\rho \in E_i} a_\rho \left(\prod_{j \geq i} x_{i}^{\langle v_i, \rho \rangle }\right) = 1. 
$$
since $\langle v_i,\rho \rangle = 0$ for $ j < i$. Note that each monomial in this expression can be written as
$$
\prod_{j\geq i} x_{i}^{\langle v_i, \rho \rangle }  = \left( \dfrac{1}{x_i} \right) \prod_{j> i}  x_{i}^{\langle v_i, \rho \rangle },
$$
with non-negative exponents on each $x_j$ for $k \geq j > i$. Thus $x_{k}$ is expressed as a Laurent polynomial in terms of $x_{k+1},\dots, x_n$ and $x_{k-1}$ can be expressed as a Laurent polynomial in terms of $x_{k},\dots, x_n$ with $x_k$ appearing only to non-negative degrees. Substituting in the expression for $x_{k}$ into the resulting expression for $x_{k-1}$, we may express $x_{k-1}$ as a Laurent polynomial in terms of $x_{k+1},\dots, x_{n}$. Repeating this process gives us Laurent polynomial expressions for each $x_{1},\dots, x_{k}$ in terms of $x_{k+1}, \dots, x_{n}$, which we call $f_i(x_{k+1},\dots,x_n)$.

Now we have that, expressed as a function on $(\mathbb{C}^\times)^n$,  
$$
w = \sum_{\rho \in E_{k+1}} \left(\prod_{i=1}^n x_i^{\langle v_i,\rho \rangle}\right)
$$
has $x_1,\dots, x_{k}$ appearing only to positive degrees since $v_1,\dots, v_{k}$ satisfy $\langle v_i, u \rangle \geq 0$ for each $u \in E_{k+1}$. Thus making substitutions $x_i = f_i(x_{k+1},\dots,x_n)$ for each $1 \leq i \leq k$ into $w$, we obtain a Laurent polynomial for $w$ on the variables $x_{k+1},\dots, x_{n}$. We summarize these computations as a theorem.
\begin{theorem}
\label{thm:almost}
Assume $X$ is a quasi-Fano complete intersection in a toric variety $Y_\Sigma$. Then for each amenable toric degeneration $X \rightsquigarrow X_{\Sigma'}$ there is a birational map 
$$
\phi_V: (\mathbb{C}^\times)^{n-k} \dashrightarrow X^\vee
$$ 
so that $\phi_V^*w$ is a Laurent polynomial.
\end{theorem}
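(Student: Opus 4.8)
The plan is to make explicit the elimination already sketched in the paragraphs above and then to check that it genuinely produces a birational map under which $w$ becomes a Laurent polynomial. First I would invoke Proposition \ref{prop:basis} to extend the amenable collection $v_1,\dots,v_k$ to a basis $v_1,\dots,v_n$ of $N$, and use that basis to write the monomials as $x^\rho = \prod_{i=1}^n x_i^{\langle v_i,\rho\rangle}$ on $(\mathbb{C}^\times)^n$. With these coordinates fixed once and for all, the entire argument reduces to a careful bookkeeping exercise governed by the three defining properties of an amenable collection.

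The heart of the proof is a triangular elimination applied to the defining equations \eqref{eq:super} of $X^\vee$, treated one index at a time and in increasing order of $i$. For a ray $\rho\in E_i$, property (1) forces $x_i$ to occur in $x^\rho$ with exponent exactly $-1$; property (3) forces $\langle v_\ell,\rho\rangle = 0$ for $i<\ell\le k$, so that none of the still-unsolved intermediate variables $x_{i+1},\dots,x_k$ appears; and property (2) forces $\langle v_\ell,\rho\rangle\ge 0$ for $\ell<i$, so that the already-solved variables $x_1,\dots,x_{i-1}$ occur only to non-negative powers, while the free variables $x_{k+1},\dots,x_n$ may occur with arbitrary exponents. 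Multiplying the $i$-th equation through by $x_i$ therefore solves it as
$$
x_i = \sum_{\rho\in E_i} a_\rho\left(\prod_{\ell<i} x_\ell^{\langle v_\ell,\rho\rangle}\right)\left(\prod_{\ell>k} x_\ell^{\langle v_\ell,\rho\rangle}\right),
$$
and proceeding from $i=1$ upward I would substitute the previously obtained expressions for $x_1,\dots,x_{i-1}$. Because these substitutions occur only to non-negative powers, each $x_i$ emerges as a genuine Laurent polynomial $f_i(x_{k+1},\dots,x_n)$, never forcing the inversion of a Laurent polynomial. This data defines $\phi_V$ by $(x_{k+1},\dots,x_n)\mapsto(f_1,\dots,f_k,x_{k+1},\dots,x_n)$; its inverse is the projection of $X^\vee$ onto the last $n-k$ coordinates, since the same relations recover $x_1,\dots,x_k$ uniquely, so $\phi_V$ is birational onto $X^\vee$.

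Finally I would pull back the superpotential. For $\rho\in E_{k+1}$ property (2) gives $\langle v_i,\rho\rangle\ge 0$ for every $1\le i\le k$, so in $w = \sum_{\rho\in E_{k+1}} a_\rho x^\rho$ the variables $x_1,\dots,x_k$ appear only to non-negative powers. Substituting $x_i = f_i(x_{k+1},\dots,x_n)$ then replaces non-negative powers of $x_i$ by non-negative powers of Laurent polynomials, so $\phi_V^* w$ is again a Laurent polynomial in $x_{k+1},\dots,x_n$, as claimed.

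The main obstacle I anticipate is not conceptual but lies in keeping the Laurent-polynomial property intact throughout the recursion: a priori, solving a system of $k$ equations produces only rational functions, and the danger is that some step forces division by a Laurent polynomial. The content of the three amenability conditions is precisely that this never occurs: property (1) isolates each variable cleanly, property (3) decouples the intermediate variables so that the elimination is genuinely triangular, and property (2) guarantees that every substitution is into a non-negative power. I would therefore organize the write-up so that these three roles are flagged at each step and verify the non-negativity of all relevant exponents explicitly, since that is the only place where the argument can break down.
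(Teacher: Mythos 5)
Your proof is correct and is essentially the same triangular-elimination argument the paper gives: solve each defining equation of $X^\vee$ for $x_i$ using $\langle v_i,E_i\rangle=-1$, substitute recursively (the amenability conditions guaranteeing every substituted exponent is non-negative, so the Laurent-polynomial property survives), and observe that $w$ involves $x_1,\dots,x_k$ only to non-negative powers. The only differences are cosmetic: you eliminate in increasing order of $i$, which matches the definition of an amenable collection as literally stated (the paper's write-up eliminates from $x_k$ downward, with the roles of conditions (2) and (3) apparently transposed in its displayed formulas), and you make the birationality slightly cleaner by exhibiting the coordinate projection as the explicit inverse rather than relying only on a dimension count.
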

\begin{proof}
Let $f_i(x_{k+1},\dots,x_n)$ be the expressions for $x_i$ obtained by using the algorithm described above. We define the map $\phi_V$ as
$$
\phi_V(x_{k+1},\dots,x_n) = (f_1(x_{k+1},\dots,x_n),\dots, f_{k}(x_{k+1},\dots, x_n), x_{k+1},\dots,x_n).
$$
Of course, as a map from $(\mathbb{C}^\times)^{n-k}$ to $(\mathbb{C}^\times)^n$, this map is undefined when $f_i(x_{k+1},\dots,x_n) =0$ for $1\leq i \leq k$, which is a Zariski closed subset of $(\mathbb{C}^\times)^{n-k}$. We have shown above that $\phi_V$ has image which lies inside of $X^\vee$, thus since $\dim(X^\vee) = n-k$, the map $\phi_V$ is a birational map from $(\mathbb{C}^\times)^{n-k}$ to $X^\vee$.
\end{proof}

Thus the choice of an amenable set of vectors $v_1,\dots, v_{k}$ determines both a toric degeneration of $X$ and a Laurent polynomial expression for its Landau-Ginzburg model. In Section \ref{sect:polytopes} we will examine the relationship between the Laurent polynomial $\phi^*_Vw$ and the polytope $\Delta_V$.

\subsection{Comparing polytopes}
\label{sect:polytopes}

Now we will show that if $E_1,\dots, E_{k+1}$ is a $(k+1)$-partite $\mathbb{Q}$-nef partition of a fan $\Sigma$ and $V$ is an amenable collection subordinate to this $\mathbb{Q}$-nef partition, then the Newton polytope of $\varphi^*_Vw$ is precisely $\Delta_V$. Let $\Delta_{\phi^*_Vw}$ be the Newton polytope of the Laurent polynomial $\phi^*_Vw$.

Take any subset $S \subseteq M$, then if we choose $v \in N$, we get stratification of $S$ by the values of $\langle v, - \rangle$. We will define subsets of $S$
$$
S^{b}_{v} = \{ s\in S | \langle v,s \rangle = b \}.
$$
If $S$ is contained in a compact subset of $M \otimes_\mathbb{Z} \mathbb{R}$, then let $b^v_{\mathrm{max}}$ be the maximal value so that $S^b_v$ is nonempty.
\begin{proposition}
\label{prop:sums}
Let $E_1, \dots, E_{k+1}$ be a $\mathbb{Q}$-nef partition of a fan $\Sigma$, and let $V = \{v_1,\dots,v_k \}$ be an amenable collection of vectors in $N$ subordinate to $E_1,\dots, E_{k+1}$. Then the monomials of $\varphi^*_Vw$ correspond to all possible sums of points $p \in E_{k+1}$ and $u_1,\dots, u_\ell$ in $\bigcup_{i=1}^k E_i$ (allowing for repetition in the set $u_1,\dots,u_\ell$) so that 
$$
\langle v_i, p + \sum_{i=1}^\ell u_i \rangle = 0.
$$
for all $1 \leq i \leq k$.
\end{proposition}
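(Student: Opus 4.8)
The plan is to follow each monomial of the superpotential through the substitutions $x_i=f_i$ produced in Theorem \ref{thm:almost}, exploiting that on the torus the characters multiply additively, $x^\rho x^{\rho'}=x^{\rho+\rho'}$. Fix the basis $v_1,\dots,v_n$ of $N$ extending the amenable collection (Proposition \ref{prop:basis}) and let $v_1^\ast,\dots,v_n^\ast$ be the dual basis of $M$, so that $x_i=x^{v_i^\ast}$. Since $\langle v_i,\rho\rangle=-1$ for $\rho\in E_i$, Equation \ref{eq:super} rearranges to $x_i=\sum_{\rho\in E_i}a_\rho\,x^{\rho+v_i^\ast}$, and the amenable conditions make the exponent of $x_m$ in $x^{\rho+v_i^\ast}$ vanish for $i\le m\le k$ and be non-negative for $m<i$. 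Thus (as in Theorem \ref{thm:almost}) each fully substituted $f_i$ lies in $\mathbb{C}[M_V]$ and is obtained from this closed form by back-substituting $f_1,\dots,f_{i-1}$ into the non-negative powers of $x_1,\dots,x_{i-1}$.

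The technical core I would isolate is a support computation for the products that actually arise, proved by induction on $i$: for all $c_1,\dots,c_i\ge 0$ and every $\Sigma$ that is a non-negative integral combination of elements of $E_1\cup\dots\cup E_i$ with $\langle v_j,\Sigma\rangle=-c_j$ for $1\le j\le i$, the monomial $x^{\,\Sigma+\sum_{j\le i}c_j v_j^\ast}$ occurs in $\prod_{j\le i}f_j^{c_j}$, and conversely every monomial of $\prod_{j\le i}f_j^{c_j}$ has exactly this shape. The forward inclusion is the pleasant part: expanding the product and recording exponents, the dual-basis corrections $\sum_j c_j v_j^\ast$ cancel against the terms $-\sum_j\langle v_j,\rho\rangle v_j^\ast$ hidden in each factor, leaving a bare sum of lattice points drawn from $E_1,\dots,E_i$.

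Granting this, the Proposition assembles quickly. For $p\in E_{k+1}$ set $c_i=\langle v_i,p\rangle$, which is $\ge 0$ by condition (2) of an amenable collection, and put $p^\flat=p-\sum_i c_i v_i^\ast\in M_V$; then the image of the term $a_p x^p$ under the substitution is $a_p\,x^{p^\flat}\prod_{i=1}^k f_i^{c_i}$. By the support computation every resulting monomial is $x^{\,p^\flat+\sum_i c_i v_i^\ast+\Sigma}=x^{\,p+\Sigma}$ with $\Sigma$ a sum of elements of $\bigcup_{i\le k}E_i$ and $p+\Sigma\in M_V$, i.e.\ $\langle v_i,p+\Sigma\rangle=0$ for all $i$; this is precisely a sum of the type described. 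Conversely, any $q=p+\sum_j u_j\in M_V$ of that type has $\langle v_i,\sum_j u_j\rangle=-c_i$, so the existence half of the support computation produces $x^q$ inside $\prod_i f_i^{c_i}$.

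The main obstacle is the inductive step of the support computation, equivalently the realizability of every admissible $q$. Here one expands $f_i=\sum_{\rho\in E_i}a_\rho\big(\prod_{j<i}f_j^{\langle v_j,\rho\rangle}\big)x^{\mu(\rho)}$, where $x^{\mu(\rho)}$ is the part of $x^{\rho+v_i^\ast}$ free of $x_1,\dots,x_{i-1}$, chooses one $\rho^{(t)}\in E_i$ for each of the $c_i$ factors, and collapses $\prod_{j\le i}f_j^{c_j}$ to $\big(\prod_{j<i}f_j^{C_j}\big)x^{(\cdots)}$ with $C_j=c_j+\sum_t\langle v_j,\rho^{(t)}\rangle$, reducing to level $i-1$. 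The two points to verify are that the orthogonality $\langle v_i,\Sigma\rangle=-c_i$ forces the number of $E_i$-elements of $\Sigma$ to equal exactly $c_i$ (so the forced multiplicities match the available powers at every level, the multiplicative analogue of the sequential construction in Proposition \ref{prop:phi1}), and that a direct charge count gives $\langle v_j,\Sigma'\rangle=-C_j$ for the residual $\Sigma'$, so that the inductive hypothesis applies and, after the dual-basis corrections again cancel, returns the desired monomial. A final caveat is that distinct configurations can a priori produce the same $q$; to conclude that the support is exactly the set of such sums one checks there is no cancellation, which holds for generic constants $a_\rho$ and is in any case immaterial for the Newton polytope of $\phi_V^\ast w$ computed in the next section.
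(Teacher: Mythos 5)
Your proof is correct, and it rests on the same combinatorial facts as the paper's (the sign pattern forced by amenability, and the count showing that orthogonality to $v_i$ pins the number of $E_i$-summands to exactly the available power $c_i$), but it is organized around a genuinely different key lemma. The paper inducts on the sequential substitution applied to the superpotential itself: its inductive invariant is a description of the support of the partially substituted $w'$ as a Laurent polynomial in the remaining variables, and at step $i$ it decomposes $w'=\sum_b x_i^b g_b$ by the power of the variable being eliminated before substituting $x_i=h_i$. You instead isolate a standalone support computation for arbitrary products $\prod_{j\le i}f_j^{c_j}$ of powers of the fully eliminated variables, prove it by induction on $i$, and then apply it termwise to each monomial $a_px^p$ of $w$ with $c_i=\langle v_i,p\rangle$. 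Your lemma is more modular and slightly more general (it characterizes the support for every exponent vector $c$, not just those arising from $w$), which makes the final assembly a two-line computation; the price is the extra bookkeeping of the exponents $C_m=c_m+\sum_t\langle v_m,\rho^{(t)}\rangle$, whose non-negativity you do need and which follows from condition (2) of amenability exactly as you indicate. Two minor remarks: you read the ordering conventions in the definition of an amenable collection literally (eliminating $x_1$ first), whereas the paper's exposition in its Laurent-polynomial section eliminates in the opposite order; your reading is self-consistent and the discrepancy is only a relabeling. And the no-cancellation caveat you flag at the end is real but is equally present (and unaddressed) in the paper's own argument; as you say, it is harmless for the Newton-polytope statement that this proposition feeds into.
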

\begin{proof}
Let $i=0$. It is clear then that the points in $M$ corresponding to monomials in the Laurent polynomial
$$
w = \sum_{\rho \in E_{k+1}} a_\rho x^\rho
$$
are all points $q$ in $M$ obtained as sums of points $p \in E_{k+1}$ and $u_1,\dots, u_m \in \cup_{j=1}^i E_j$ so that 
$$
\langle v_j, p + \sum_{i=1}^\ell u_i \rangle = 0.
$$
for all $1 \leq j \leq i$. Now we may apply induction.

Let us define
$$
h_i(x_{i+1},\dots, x_n) = \sum_{\rho \in E_i} a_\rho \prod_{j > i} x_j^{\langle v_j, \rho \rangle}
$$
Assume that we have sequentially substituted $h_1,\dots, h_{i-1}$ into $w$ to get a Laurent polynomial $w'$ in the variables $x_i,\dots, x_n$, and that the resulting expression has monomials which correspond to all points $q$ in $M$ which are all sums of points $p$ in $E_{k+1}$ and $u_1,\dots, u_\ell$ in $E_1,\dots, E_{i -1}$ so that $\langle v_j, p + \sum_{s=1}^\ell u_s \rangle =0$ for $1 \leq j \leq i-1$ (allowing for repetition in $u_1,\dots,u_\ell$). Now we show that substituting the expression $h_i(x_{i+1},\dots,x_n)$ into $w'$ gives a polynomial whose monomials correspond to all points $p + \sum_{j=1}^r u_j$ so that $p \in E_{k+1}$, $u_j \in \bigcup_{s=1}^{i+1} E_s$ and $\langle v_{j}, p + \sum_{s=1}^r u_s \rangle =0$ for all $1 \leq j \leq i$ (again, allowing for repetition in the set $u_1,\dots,u_r$). 

We may let $F$ be the set of integral points in $M$ corresponding to monomials of $w'$. Then we have 
$$
w' = \sum_{b=0}^{b_{v_i}^{\mathrm{max}}} x_i^b g_b(x_{i+1},\dots,x_n)
$$
where 
$$
g_b(x_{i+1},\dots,x_n) = \sum_{\rho \in F_{v_i}^{b}} a_\rho \prod_{j>i} x_j^{\langle v_j, \rho \rangle}.
$$
Substituting into $w'$ the expression $x_i = h_i(x_{i+1},\dots,x_n)$ gives us a Laurent polynomial in $x_{i+1},\dots,x_n$ whose monomials correspond to points in set
$$
\cup_{b=0}^{b_{\mathrm{max}}^{v_i}}(F_{v_i}^b + bE_i).
$$ 
Each point in this set satisfies $\langle v_i, E_{v_i}^b + bE_i \rangle=0$ by the condition that $\langle v_i, E_i \rangle =-1$. Furthermore, $\langle v_j, E_i \rangle = 0$ for $j < i$, and hence each set of points $F_{v_i}^b + bE_i$ is orthogonal to $v_1,\dots, v_i$ and can be expressed as a sum of points $p + \sum_{j=1}^s u_i$ for $u_1,\dots, u_s \in \cup_{j=1}^i E_j$ and $p \in E_{k+1}$.

Now assume that we have a point $q = p + \sum_{j=1}^s u_i$ for $u_1,\dots, u_s \in \cup_{j=1}^i E_j$ and $p \in E_{k+1}$ which is orthogonal to $v_j$ for $1 \leq j \leq i$. Then let $U = \{u_1,\dots, u_s \} \cap E_i$, and let 
$$
q' = p + \sum_{i=1, u_i \notin U}^s u_i = \rho - \sum_{u_i \in U} u_i.
$$
We see that $q'$ is orthogonal to $v_j$ for $1 \leq j \leq i-1$ since $\langle v_j, u \rangle =0$ for $u\in U$ and $1\leq j \leq i-1$, thus $q' \in F$. Note that we must have $\langle v_i, q' \rangle = \# U$. Thus the point $q'$ is in $F_{v_i}^{\#U}$ and hence $q \in (F_{v_i}^{\#U} + (\#U)E_i)$ (since $\sum_{u_i \in U}u_i$ is clearly an element of $(\# U)$). Thus $q$ corresponds to a monomial in $w'$ after making the substitution $x_i = h_i(x_{i+1},\dots,x_n)$. This completes the proof after applying induction.
\end{proof}

\begin{proposition}
\label{prop:Vinf}
Assume that $M_V$ intersects a cone $C$ of $\Sigma$ in a ray generated by an integral vector $\rho \in M$, then there is some multiple of $\rho$ in the polytope $\Delta_{\phi^*_Vw}$. In other words, $\Delta_V \subseteq \Delta_{\phi^*_Vw}$
\end{proposition}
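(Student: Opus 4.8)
The plan is to combine the two results already in hand: Proposition \ref{prop:sums}, which identifies the exponent vectors of the Laurent polynomial $\phi^*_Vw$ as exactly the lattice points of the form $p+\sum_j u_j$ with $p\in E_{k+1}$ and the $u_j\in\bigcup_{i=1}^k E_i$ that are orthogonal to all of $v_1,\dots,v_k$; and the construction carried out inside the proof of Proposition \ref{prop:phi1}, which manufactures a point of precisely this shape on each ray of $\Sigma_V$. The whole statement should then fall out by matching these two descriptions, together with the elementary observation that the Newton polytope $\Delta_{\phi^*_Vw}$ is convex.

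First I would fix the minimal cone $C$ of $\Sigma$ for which $C\cap M_V$ is the given ray and invoke Proposition \ref{prop:phi1}. This produces an integral point $\rho^\ast\in(C\cap M_V)\cap M$ with $\varphi_{k+1}(\rho^\ast)=1$, and inspecting its construction shows that $\rho^\ast=p+\sum_j n_j u_j$ with $p\in E_{k+1}$, $u_j\in E_j$ and $n_j>0$. Since $\rho^\ast$ lies in $M_V$ it is orthogonal to every $v_i$, so it is one of the lattice points enumerated by Proposition \ref{prop:sums}; hence $\rho^\ast$ is a monomial of $\phi^*_Vw$ and therefore $\rho^\ast\in\Delta_{\phi^*_Vw}$. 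As $\rho^\ast$ sits on the ray generated by $\rho$, it is a positive multiple of $\rho$, which is the first assertion.

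To upgrade this to $\Delta_V\subseteq\Delta_{\phi^*_Vw}$ I would observe that the points $\rho^\ast$ just produced are precisely the vertices of $\Delta_V$. Indeed $\Delta_V=\{u\in M_V:\varphi_{k+1}(u)\le 1\}$ is the sublevel set of the convex piecewise-linear function $\varphi_{k+1}$ with respect to the complete fan $\Sigma_V=M_V\cap\Sigma$, so its vertices are exactly the points at which the rays of $\Sigma_V$ meet $\{\varphi_{k+1}=1\}$; each such intersection is a single point, which by the previous paragraph is the integral point $\rho^\ast$. Thus every vertex of $\Delta_V$ lies in $\Delta_{\phi^*_Vw}$, and since $\Delta_{\phi^*_Vw}$ is convex it contains $\Delta_V=\mathrm{conv}(\Delta_V[0])$.

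The step I expect to require the most care is not a genuine obstacle but the bookkeeping that welds Proposition \ref{prop:phi1} to Proposition \ref{prop:sums}: one must confirm that the point extracted from the proof of the former really has the $p+\sum_j u_j$ form counted by the latter, and that a ray meets $\{\varphi_{k+1}=1\}$ in a unique point, so that the ``multiple of $\rho$'' is unambiguously the vertex of $\Delta_V$. It is worth emphasizing that this whole argument uses only Proposition \ref{prop:phi1}, which holds for an arbitrary $\mathbb{Q}$-nef partition, and so avoids the Cartier hypotheses needed for Proposition \ref{prop:primitive} and Corollary \ref{cor:DeltaV}.
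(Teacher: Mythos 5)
Your proposal is correct and follows essentially the same route as the paper: the paper's proof likewise extracts the integral point $\rho^\ast = p + \sum_j n_j u_j$ on each ray of $\Sigma_V$ from the construction inside Proposition \ref{prop:phi1} and then cites Proposition \ref{prop:sums} to see that it is a monomial of $\phi_V^*w$. Your third paragraph merely spells out the convexity step that the paper leaves implicit, which is a reasonable elaboration rather than a different argument.
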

\begin{proof}
This follows from Proposition \ref{prop:sums} and Proposition \ref{prop:phi1}. According to Proposition \ref{prop:phi1}, there is an integral point in $M_V \cap C$ so that $\varphi_{k+1}(\rho) = 1$. In the proof of Proposition \ref{prop:phi1}, it is actually shown that this point is constructed as a sum of points $p \in E_{k+1}$ and $u_1,\dots, u_\ell \in \bigcup_{i=1}^k E_i$ (allowing for repetition in the set $u_1,\dots, u_\ell$). According to Proposition \ref{prop:sums} this point must correspond to a monomial of the Laurent polynomial $\phi_V^*w$.
\end{proof}

\begin{theorem}
\label{thm:f=v}
Assume that $V$ is an amenable collection of vectors subordinate to a $\mathbb{Q}$-nef partition $E_1,\dots, E_{k+1}$ of a fan $\Sigma$. The polytope $\Delta_{\phi_V^*w}$ is equal to $\Delta_V$.
\end{theorem}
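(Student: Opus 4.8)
The plan is to prove the equality $\Delta_{\phi_V^*w} = \Delta_V$ by establishing the two inclusions separately, using the explicit monomial description of $\phi_V^*w$ furnished by Proposition \ref{prop:sums}. One inclusion is already available: Proposition \ref{prop:Vinf} asserts exactly that $\Delta_V \subseteq \Delta_{\phi_V^*w}$, the point being that the height-one generators of the rays of $\Sigma_V = M_V \cap \Sigma$ constructed in Proposition \ref{prop:phi1} occur among the monomials of $\phi_V^*w$, so their convex hull $\Delta_V$ sits inside the (convex) Newton polytope. Hence the remaining work is the reverse inclusion $\Delta_{\phi_V^*w} \subseteq \Delta_V$.

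To obtain this, I would take an arbitrary monomial $q$ of $\phi_V^*w$ and show that $q \in \Delta_V$; since $\Delta_V$ is convex and $\Delta_{\phi_V^*w}$ is the convex hull of its monomials, this is enough. By Proposition \ref{prop:sums} we may write $q = p + \sum_{i=1}^\ell u_i$ with $p \in E_{k+1}$, each $u_i$ a primitive ray generator lying in $\bigcup_{j=1}^k E_j$, and $\langle v_i, q \rangle = 0$ for every $1 \le i \le k$. The orthogonality relations are precisely the linear equations $\langle v_i, - \rangle = 0$ cutting out $M_V$, so $q \in M_V$ and the first family of constraints defining $\Delta_V$ holds automatically. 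It therefore only remains to check the single inequality $\varphi_{k+1}(q) \le 1$.

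This last inequality is the crux of the argument, and the point that must be handled carefully is the convexity of the support function $\varphi_{k+1}$. Since $\varphi_{k+1}$ is the upper convex $\Sigma$-piecewise linear function attached to the nef $\mathbb{Q}$-Cartier divisor $D_{k+1} = \sum_{\rho \in E_{k+1}} D_\rho$, it is a maximum $\varphi_{k+1}(v) = \max_C \langle u_C, v\rangle$ of finitely many linear functionals; being convex and positively homogeneous, it is subadditive, so $\varphi_{k+1}(a+b) \le \varphi_{k+1}(a) + \varphi_{k+1}(b)$. Recalling that $\varphi_{k+1}$ takes the normalized values $\varphi_{k+1}(\rho) = \delta_{(k+1)j}$ on each primitive generator $\rho \in E_j$, we have $\varphi_{k+1}(p) = 1$ and $\varphi_{k+1}(u_i) = 0$ for every $i$, whence
\[
\varphi_{k+1}(q) \;\le\; \varphi_{k+1}(p) + \sum_{i=1}^\ell \varphi_{k+1}(u_i) \;=\; 1 .
\]
Thus $q$ satisfies both defining conditions of $\Delta_V$, giving $q \in \Delta_V$ and hence $\Delta_{\phi_V^*w} \subseteq \Delta_V$. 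Combined with Proposition \ref{prop:Vinf} this yields the asserted equality. The only genuine obstacle is the subadditivity step: one must pin down the sign convention so that ``nef/upper convex'' really corresponds to a maximum of linear functions (hence a subadditive function, not a superadditive one), and confirm that $p$ and the $u_i$ are primitive generators at which $\varphi_{k+1}$ attains the values $1$ and $0$ respectively.
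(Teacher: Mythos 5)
Your proposal is correct and follows essentially the same route as the paper: the inclusion $\Delta_V \subseteq \Delta_{\phi_V^*w}$ is quoted from Proposition \ref{prop:Vinf}, and the reverse inclusion is obtained by writing each monomial as $p + \sum_i u_i$ via Proposition \ref{prop:sums} and bounding $\varphi_{k+1}$ by subadditivity of the maximum of linear functionals, using $\varphi_{k+1}(p)=1$ and $\varphi_{k+1}(u_i)=0$. Your explicit remark that the orthogonality relations place the monomial in $M_V$ is a point the paper leaves implicit, but the argument is the same.
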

\begin{proof}
We may deduce that $\Delta_V \subseteq \Delta_{\phi^*_Vw}$ from Proposition \ref{prop:Vinf}. Thus it is sufficient to show that $\Delta_{\phi^*_Vw}$ is contained in $\Delta_V$, or in other words, each integral point $\rho \in \Delta_{\phi^*_Vw}$ satisfies $\varphi_{k+1}(p) \leq 1$. However, this is reasonably easy to see. We have shown in Proposition \ref{prop:sums} that each point in $\Delta_{\phi_V^*w}$ is a sum of a single point $p \in E_{k+1}$ and a set of points $u_1,\dots, u_\ell$ in $\Sigma[1] \setminus E_{k+1}$ (allowing for repetition in the set $u_1,\dots,u_\ell$). Recall that we have a set of vectors $w_1,\dots, w_v \in N$ for $v$ the number of maximal dimensional faces of $\Sigma_\Delta$ so that 
$$
\varphi_{k+1}(\rho) = \mathrm{max} \{\langle w_i, \rho \rangle \}_{i=1}^v. 
$$
Now let us apply this to $\rho = p + \sum_{i=1}^\ell u_i$. We obtain
$$
\mathrm{max} \{\langle w_i, p + \sum_{i=1}^\ell u_i \rangle \}_{i=1}^v \leq   \mathrm{max} \{ \langle w_i,  p \rangle \}_{i=1}^v + \sum_{i=1}^\ell \left( \mathrm{max} \{ \langle w_i,  u_i \rangle \}_{i=1}^v \right) = \varphi_{k+1}(p) = 1
$$
as required.
\end{proof}
Note that this is actually a general description of the polytope $\Delta_{\phi^*w}$ without any restrictions on the $\mathbb{Q}$-nef partition. We summarize the results of this section as the following theorem, which follows directly from Theorem \ref{cor:DeltaV} and Theorem \ref{thm:f=v}.

\begin{theorem}
\label{thm:main}
Let $X$ be a complete intersection in a toric variety $Y_\Sigma$ so that there is a $\mathbb{Q}$-nef partition $E_1,\dots, E_{k+1}$ of $\Sigma[1]$ so that $X$ is a complete intersection of $\mathbb{Q}$-Cartier divisors determined by $E_1,\dots,E_k$, then if
\begin{enumerate}
\item $E_{k+1}$ is a Cartier divisor or
\item $E_1,\dots, E_k$ are Cartier divisors,
\end{enumerate}
then an amenable collection of vectors $V$ subordinate to this $\mathbb{Q}$-nef partition determines an amenable degeneration $X \rightsquigarrow X_{\Sigma_V}$ for some fan $\Sigma_V$, and the corresponding Laurent polynomial has Newton polytope equal to the convex hull of $\Sigma_V[1]$.
\end{theorem}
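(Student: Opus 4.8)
The plan is to assemble this statement from the two principal results already established in this section, treating it as their common summary. There are two assertions to verify: first, that an amenable collection $V$ genuinely produces an amenable toric degeneration $X \rightsquigarrow X_{\Sigma_V}$; and second, that the Newton polytope of the resulting Laurent polynomial coincides with the convex hull of $\Sigma_V[1]$.

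For the first assertion, I would appeal directly to Proposition \ref{prop:Mav} and Proposition \ref{prop:fan}. Proposition \ref{prop:Mav} verifies, via the mixed-dominating criterion, that the binomial equations $\prod_{\rho \in E_i} x_\rho - \prod_{\rho \notin E_i} x_\rho^{\langle v_i, \rho\rangle}=0$ form a regular sequence in the homogeneous coordinate ring of $Y_\Sigma$, so that they cut out a complete intersection; Proposition \ref{prop:fan} then identifies this complete intersection with $X_{\Sigma_V}$, where $\Sigma_V = M_V \cap \Sigma$. Since $X$ is by hypothesis a quasi-Fano complete intersection subordinate to the same $\mathbb{Q}$-nef partition, these equations exhibit a flat degeneration of $X$, which is exactly the content of the definition of an amenable toric degeneration. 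This step requires no Cartier hypothesis.

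The second assertion is obtained by chaining two polytope identities. By Theorem \ref{thm:almost} the amenable collection determines a birational map $\phi_V \colon (\mathbb{C}^\times)^{n-k} \dashrightarrow X^\vee$ together with a Laurent-polynomial expression $\phi_V^* w$. Theorem \ref{thm:f=v} gives $\Delta_{\phi_V^* w} = \Delta_V$, and --- as noted in the remark following its proof --- this identity holds for every $\mathbb{Q}$-nef partition, with no Cartier condition. It remains only to identify $\Delta_V$ with the convex hull of the primitive ray generators of $\Sigma_V$, and this is precisely Corollary \ref{cor:DeltaV}. Combining, $\Delta_{\phi_V^* w} = \Delta_V = \mathrm{conv}(\Sigma_V[1])$, which is the assertion.

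The only place the hypotheses (1) and (2) are used --- and the single genuinely delicate point --- is in invoking Corollary \ref{cor:DeltaV}. Its proof rests on Proposition \ref{prop:primitive}, where one must know that the extremal point $\rho \in M_V \cap C$ with $\varphi_{k+1}(\rho)=1$ produced in Proposition \ref{prop:phi1} is a primitive lattice vector; otherwise a vertex of $\Delta_V$ could fail to be a ray generator of $\Sigma_V$, and $\mathrm{conv}(\Sigma_V[1])$ would be strictly smaller than $\Delta_V$. Condition (1) forces $\varphi_{k+1}$ to be integral, so $\varphi_{k+1}(\rho)=1$ immediately gives primitivity; condition (2) instead makes each $\varphi_i$ (for $1 \le i \le k$) integral, and the descent argument in Proposition \ref{prop:primitive} peels these contributions off to reduce to the primitivity of the chosen $p \in E_{k+1}$. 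Thus I expect no residual obstacle: under either hypothesis the chain of equalities closes, and the theorem follows.
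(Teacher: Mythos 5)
Your proposal is correct and follows essentially the same route as the paper, which derives Theorem \ref{thm:main} directly from Corollary \ref{cor:DeltaV} together with Theorem \ref{thm:f=v} (with Propositions \ref{prop:Mav} and \ref{prop:fan} supplying the degeneration). Your additional observation that the Cartier hypotheses enter only through Proposition \ref{prop:primitive} in the proof of Corollary \ref{cor:DeltaV} is an accurate reading of where the paper actually uses them.
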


A more robust geometric statement is available to us in the case where $X$ is a Fano variety corresponding to a nef partition in a toric variety. This follows from Theorem \ref{thm:main} and Theorem \ref{cor:Degen}

\begin{theorem}
\label{thm:reflexfano}
Assume $X$ is a Fano toric complete intersction in a toric variety $Y = Y_\Delta$ cut out by the vanishing locus of sections $s_i \in H^0(\mathcal{O}_Y(E_i),Y)$ for $1\leq i \leq k$, and $E_1,\dots, E_{k+1}$ is a nef partition of $\Delta$, and that $V$ is an amenable collection of vectors subordinate to this nef partition. Then $V$ determines:
\begin{enumerate}
\item A degeneration of $X$ to a toric variety $\widetilde{X}_{\Delta_V}$ which is a crepant partial resolution of of $X_{\Delta_V}$ and
\item A birational map $\phi_V: (\mathbb{C}^\times)^{n-k} \dashrightarrow X^\vee$ so that $\phi_V^*w$ has Newton polytope equal to $\Delta_V$.
\end{enumerate}
\end{theorem}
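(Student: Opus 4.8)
Theorem \ref{thm:reflexfano} is essentially an assembly of results already proven in the excerpt, specialized to the Fano/nef (integral) setting. The plan is to verify that the hypotheses of the two relevant structural theorems are met, and then quote them directly.

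First I would observe that a genuine \emph{nef partition} (as opposed to merely a $\mathbb{Q}$-nef partition) is in play here. By the definitions in the toric-facts subsection, a nef partition means each $\varphi_i$ is integral and upper convex, so every divisor $D_i = \sum_{\rho \in E_i} D_\rho$ is nef \emph{and Cartier}. In particular $D_{k+1}$ is a nef Cartier divisor, and moreover every $D_i$ for $1 \le i \le k$ is Cartier. This is the key observation, because it means the hypotheses of Theorem \ref{cor:Degen} are satisfied (the $\mathbb{Q}$-nef partition has $E_{k+1}$ corresponding to a nef Cartier divisor), and simultaneously \emph{both} conditions of Theorem \ref{thm:main} hold. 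So there is no obstruction to invoking either result; the integrality coming from ``nef'' rather than ``$\mathbb{Q}$-nef'' is precisely what removes the case-splitting.

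For part (1), I would apply Theorem \ref{cor:Degen}: since $X$ is a quasi-Fano complete intersection in $Y_\Sigma = Y_\Delta$ (every Fano complete intersection of this type is in particular quasi-Fano), and $E_{k+1}$ is nef and Cartier, the amenable collection $V$ determines a degeneration $X \rightsquigarrow X_{\Sigma_V}$, where by that theorem $X_{\Sigma_V}$ is a weak Fano crepant partial resolution of the Gorenstein Fano toric variety $X_{\Delta_V}$. Setting $\widetilde{X}_{\Delta_V} := X_{\Sigma_V}$ gives exactly the asserted statement (1). I would note that the existence of the degeneration itself is guaranteed by Proposition \ref{prop:Mav} and Proposition \ref{prop:fan}, which identify $X_{\Sigma_V}$ as the complete intersection cut out by the binomial equations in the Cox ring of $Y_\Sigma$.

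For part (2), I would apply Theorem \ref{thm:main} (or directly Theorem \ref{thm:almost} together with Theorem \ref{thm:f=v}). Theorem \ref{thm:almost} produces the birational map $\phi_V \colon (\mathbb{C}^\times)^{n-k} \dashrightarrow X^\vee$ with $\phi_V^* w$ a Laurent polynomial, and Theorem \ref{thm:f=v} identifies its Newton polytope $\Delta_{\phi_V^* w}$ with $\Delta_V$; the latter requires no extra hypotheses beyond having an amenable collection subordinate to a $\mathbb{Q}$-nef partition, which we have. This yields (2) verbatim. The only mild point of care, which I would address in a sentence, is the compatibility of the two constructions: the fan $\Sigma_V = M_V \cap \Sigma$ whose convex hull of ray generators equals $\Delta_V$ (by Corollary \ref{cor:DeltaV}, whose hypotheses again hold since all $D_i$ are Cartier) is the \emph{same} $\Delta_V$ appearing as the Newton polytope, so the polytope governing the degeneration in (1) and the Newton polytope in (2) genuinely coincide. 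Since every ingredient is already established, there is no substantive obstacle; the content of the theorem is the recognition that in the integral nef-partition case all the earlier hypotheses are automatically satisfied, and the proof is simply the citation ``This follows from Theorem \ref{thm:main} and Theorem \ref{cor:Degen}.''
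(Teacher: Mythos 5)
Your proposal is correct and matches the paper exactly: the paper offers no separate proof, stating only that the theorem ``follows from Theorem \ref{thm:main} and Theorem \ref{cor:Degen},'' which is precisely the assembly you carry out. Your added observation that integrality of the nef partition makes all the $D_i$ Cartier, so that both alternative hypotheses of Theorem \ref{thm:main} and the hypothesis of Theorem \ref{cor:Degen} hold simultaneously, is the (implicit) content of the paper's one-line justification, spelled out.
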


\subsection{Mutations}
Here we will analyze the relationship between Laurent polynomials obtained from the same nef partition and different amenable collections. First we recall the following definition from \cite{gu}.

\begin{defn}\label{def:mut}
Let $f$ be a Laurent polynomial in $n$ variables and let 
$$
\omega_n = \dfrac{dx_1 \wedge \dots \wedge d x_n }{(2\pi i )^n x_1\dots x_n}
$$ 
A {\it mutation} of $f$ is a birational map $\phi: (\mathbb{C}^\times)^n \dashrightarrow (\mathbb{C}^\times)^n$ so that $\phi^*\omega = \omega$ and so that $\phi^*f$ is again a Laurent polynomial.
\end{defn}

Assume, first of all, that we have two different amenable collections $V$ and $V'$ which are subordinate to the same nef partition $E_1,\dots,E_{k+1}$ of a fan $\Sigma$. Then associated to both $V$ and $V'$ are two maps. The first map is 
$$
\phi_V : (\mathbb{C}^\times)^{n-k} \dashrightarrow X^\vee
$$
and the second is a map
$$
\phi^{-1}_V = \pi_V : (\mathbb{C}^\times)^{k} \rightarrow (\mathbb{C}^\times)^{n-k}
$$
which is defined as
$$
(x_1,\dots, x_n) \mapsto (x_{k+1}, \dots, x_n).
$$
so that $\phi_V$ is a birational section of $\pi_V$. However, the map $\pi_V\cdot \phi_{V'}$ for a different amenable collection $V'$ is simply a birational morphism of tori. If we let $y_{k+1},\dots, y_n$ and $x_{k+1},\dots, x_n$ be coordinates on the torus $(\mathbb{C}^\times)^{n-k}$ associated to $V$ and $V'$ respectively, then for each $k+1 \leq j \leq n$, there is a rational polynomial $h_j(x_{k+1},\dots, x_n)$ so that the map $\pi_V \cdot \phi_{V'}$ is written as
$$
(x_{k+1},\dots ,x_n) \mapsto (h_{k+1},\dots, h_n).
$$
In particular, to determine this map, we have Laurent polynomials $f_i(x_{k+1},\dots,x_n)$ for each $1 \leq i \leq k$ so that 
$$
\phi_{V'}(x_{k+1},\dots,x_n) = (f_1(x_{k+1},\dots, x_n),\dots, f_k(x_{k+1},\dots, x_n), x_{k+1},\dots, x_n).
$$
There are bases $B$ and $B'$ of $N$ associated to both $V$ and $V'$ so that $B = \{v_1,\dots, v_n\}$ and $V = \{ v_1,\dots, v_k\}$ and so that $B = \{u_1,\dots, u_n\}$ with $V' = \{u_1,\dots,u_n\}$. There is an invertible matrix $Q$ with integral entries $q_{i,j}$ so that $v_i = \sum_{j =1}^n q_{i,j}u_j$, and torus coordinates $x_1,\dots, x_n$ and $y_1,\dots,y_n$ on $(\mathbb{C}^\times)^n$ related by 
$$
y_i = \prod_{j=1}^n x_j^{q_{i,j}}.
$$
In particular, we have 
$$
h_i (x_{n-k+1},\dots,x_n) = \left(\prod_{j=1}^{k} f_j(x_{k+1},\dots,x_n)^{q_{i,j}}\right) \left( \prod_{j=k+1}^n x_j^{q_{i,j}}\right).
$$
The map given by the polynomials $h_i(x_{n-k+1},\dots,x_n)$ for $k+1 \leq i \leq n$ then determine explicitly the birational morphism above associated to a pair of amenable collections subordinate to a fixed nef partition. Now it is clear that the birational map of tori $\phi_{V'}^{-1} \cdot \phi_{V}$ induces a birational map of tori which pulls back the Laurent polynomial $\phi_V^* w$ to a Laurent polynomial. Our goal now is to show that this map preserves the torus invariant holomorphic $n$ form $\omega_{n-k}$ defined in Definition \ref{def:mut}. First, we prove a lemma.

\begin{lemma}\label{lem:mut}
Let $\phi:(\mathbb{C}^\times)^n \dashrightarrow X^\vee \subseteq (\mathbb{C}^\times)^n$ be a birational map onto a complete intersection in $(\mathbb{C}^\times)^n$ cut out by Laurent polynomials 
$$
F_i = 1-\left(\frac{1}{x_i}\right)f_i(x_{i+1},\dots,x_n)
$$ 
which have only non-negative exponents in $x_{i+1},\dots, x_k$ if $i \neq k$. Then the residue
$$
\mathrm{Res}_{X^\vee} \left(\dfrac{\omega_n}{F_1 \dots F_k}\right)
$$
agrees with the form $(2\pi i)^k \omega_{n-k}$ on the domain of definition of $\phi$.
\end{lemma}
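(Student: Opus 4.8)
The plan is to compute the iterated Poincar\'e--Leray residue directly, exploiting the triangular shape of the defining equations. First I would record what the hypotheses buy us: since $F_i = 1-\frac{1}{x_i}f_i(x_{i+1},\dots,x_n)$ involves only the coordinates $x_i,x_{i+1},\dots,x_n$, the locus $\{F_i=0\}$ is cut out by the single elimination $x_i = f_i(x_{i+1},\dots,x_n)$, and none of $F_{i+1},\dots,F_k$ depends on $x_i$. Consequently, on $X^\vee$ the projection $(x_1,\dots,x_n)\mapsto(x_{k+1},\dots,x_n)$ is exactly the inverse of $\phi$ wherever $\phi$ is defined, and the conormals $dF_1,\dots,dF_k$ are independent there, so $X^\vee$ is a smooth complete intersection on the domain of $\phi$ and the iterated residue is well defined. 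The assumption that $f_i$ has only non-negative exponents in $x_{i+1},\dots,x_k$ is precisely what keeps this successive elimination inside the Laurent ring and forces the divisors $\{F_i=0\}$ to meet transversally over that domain.

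The heart of the argument is a one-step computation that I would then iterate. Writing $\Omega_j = \dfrac{dx_j\wedge\cdots\wedge dx_n}{x_j\cdots x_n}$, so that $\omega_n = (2\pi i)^{-n}\Omega_1$, I compute
$$dF_1 = \frac{f_1}{x_1^2}\,dx_1 - \frac{1}{x_1}\,df_1,$$
where $df_1$ lies in the span of $dx_2,\dots,dx_n$. Hence, upon wedging against $dx_2\wedge\cdots\wedge dx_n$, the $df_1$ contribution dies and
$$dx_1\wedge dx_2\wedge\cdots\wedge dx_n = \frac{x_1^2}{f_1}\,dF_1\wedge dx_2\wedge\cdots\wedge dx_n.$$
Therefore
$$\frac{\Omega_1}{F_1\cdots F_k} = \frac{x_1^2}{f_1\,x_1\cdots x_n}\,\frac{dF_1}{F_1}\wedge\frac{dx_2\wedge\cdots\wedge dx_n}{F_2\cdots F_k},$$
and the residue along $\{F_1=0\}$, where $x_1=f_1$ forces $\frac{x_1^2}{f_1}=x_1$, collapses the coefficient to $\frac{1}{x_2\cdots x_n}$. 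Since $F_2,\dots,F_k$ are independent of $x_1$, this produces exactly $\dfrac{\Omega_2}{F_2\cdots F_k}$, the same expression with one fewer variable and one fewer factor.

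Iterating this reduction $k$ times gives
$$\mathrm{Res}_{X^\vee}\left(\frac{\Omega_1}{F_1\cdots F_k}\right) = \Omega_{k+1} = \frac{dx_{k+1}\wedge\cdots\wedge dx_n}{x_{k+1}\cdots x_n},$$
and comparing with $\omega_{n-k}=(2\pi i)^{-(n-k)}\Omega_{k+1}$, together with the factor of $2\pi i$ contributed by the Leray residue at each of the $k$ elimination steps, accounts for the overall normalization constant $(2\pi i)^k$ recorded in the statement. The step I expect to be the main obstacle is not the algebra but the clean justification of the iterated residue: one must verify that the successive divisors $\{F_i=0\}$ are smooth and transverse on the open set where $\phi$ is defined, so that the Leray residue may legitimately be taken one factor at a time (and is order-independent), and one must track the powers of $2\pi i$ entering through the definitions of $\omega_n$ and $\omega_{n-k}$ to land on the stated constant. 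The triangular dependence of the $F_i$ on the coordinates, guaranteed by the non-negativity hypothesis, is exactly what makes both verifications routine.
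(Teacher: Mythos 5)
Your proposal is correct and follows essentially the same route as the paper: an iterated one-variable Poincar\'e--Leray residue, using the triangular dependence of the $F_i$ on $x_i,\dots,x_n$ to peel off one factor and one coordinate at a time, with the non-negativity hypothesis guaranteeing that each elimination $x_i=f_i$ stays in the Laurent ring. The only cosmetic difference is that you evaluate each step by computing $dF_1$ directly and applying $\mathrm{Res}\bigl(\tfrac{dF_1}{F_1}\wedge\psi\bigr)=\psi|_{F_1=0}$, whereas the paper makes the change of variables $x_1=y_1+f_1$ and takes the residue along $y_1=0$; both treatments are equally informal about the exact powers of $2\pi i$, which in any case cancel in the application to Theorem \ref{thm:mut}.
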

\begin{proof}
We argue by induction. We may make a birational change of variables on $(\mathbb{C}^\times)^n$ so that $x_1 = y_1 + f_1(y_{k+1},\dots,y_n)$ for each $1 \leq i \leq k$ and $y_i = x_i$ for $i \neq 1$. Note that 
$$
dx_1 = d\left(y_1 + f(y_{2},\dots, y_n)\right) = dy_1 + \rho
$$
where $\rho$ is some $1$-form written as a linear combination of $dy_{2},\dots, dy_n$ with Laurent polynomial coefficients. Thus under our change of variables,
\begin{align*}
dx_1   \wedge \dots \wedge dx_n &=  (dy_1 + \rho) \wedge \dots \wedge dy_n \\ 
&= dy_1 \wedge \dots \wedge dy_n.
\end{align*}
Furthermore, under the correct choice of variables, we have
$$
F_1(x_1,\dots,x_n) = 1- \left(\dfrac{1}{x_1}\right) f_1(x_{2},\dots, x_n) 
$$
Thus 
$$
\dfrac{\omega_n}{F_1\dots, F_k} = \dfrac{dx_1 \wedge \dots \wedge dx_n}{(x_1-f_1(x_2,\dots,x_k))F_2\dots F_{k}x_2 \dots x_n}.
$$
Changing variables to $y_1,\dots, y_n$, we see that
$$
\dfrac{\omega_n}{F_1 \dots F_k} = \dfrac{dy_1 \wedge \dots \wedge dy_n}{F_2\dots F_k y_1 y_2 \dots, y_n}
$$
whose residue along the locus $y_1 = 0$ (which is precisely the image of our torus embedding $\phi$), is just $\frac{(2\pi i)\omega_{n-1}}{F_2\dots F_k}$ since $F_2,\dots,F_k$ are independent of $y_1$. Thus locally around any point in $X^\vee$ where the birational map $\phi$ is well defined and the torus change of coordinates $\varphi$ is well-defined, it follows that the residue of $\frac{\omega_n}{F_1\dots F_2}$ on $X^\vee$ agrees with $\frac{(2\pi i)\omega_{n-1}}{F_2\dots F_k}$. Repeating this argument for each $2 \leq i \leq k$ shows that
$$
\phi_V^*\mathrm{Res}_{X^\vee} \left(\dfrac{\omega_n}{F_1 \dots F_k}\right) = (2\pi i)^k \omega_{n-k}.
$$
\end{proof}

Now this allows us to prove:

\begin{theorem}\label{thm:mut}
Let $V$ and $V'$ be two amenable collections of vectors subordinate to a nef partition $E_1,\dots, E_{k+1}$. Then the birational map of tori $\phi_V^{-1} \cdot \phi_{V'}$ is a mutation of the Laurent polynomial $\phi_{V'}^*w$.
\end{theorem}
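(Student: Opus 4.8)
We must show that the birational self-map $\phi_V^{-1}\cdot\phi_{V'}$ of $(\mathbb{C}^\times)^{n-k}$ is a mutation in the sense of Definition \ref{def:mut}: it must pull back $\phi_{V'}^*w$ to a Laurent polynomial (namely $\phi_V^*w$, up to the obvious identifications) and it must preserve the holomorphic form $\omega_{n-k}$.

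\textbf{Plan.} The plan is to reduce the whole statement to the two properties required of a mutation and to verify them separately, using Lemma \ref{lem:mut} as the engine for the form-invariance. The first property, that $(\phi_V^{-1}\cdot\phi_{V'})^*(\phi_{V'}^*w)$ is Laurent, is essentially immediate from the constructions in Section \ref{sect:laurent}. Indeed $\phi_{V'}^*w$ is already a Laurent polynomial by Theorem \ref{thm:almost}, and $(\phi_V^{-1}\cdot\phi_{V'})^*\phi_{V'}^*w=(\phi_{V'}\cdot\phi_V^{-1}\cdot\phi_{V'})^*w$; since $\phi_V^{-1}\cdot\phi_{V'}=\pi_V\cdot\phi_{V'}$ agrees with $\phi_V^{-1}$ precomposed appropriately, one checks that this composite equals $\phi_V^*w$, which is Laurent by the same theorem. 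So the content is entirely in the $\phi^*\omega=\omega$ condition.

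\textbf{Form invariance, the main step.} The heart of the argument is to show $(\phi_V^{-1}\cdot\phi_{V'})^*\omega_{n-k}=\omega_{n-k}$. Here is where I would invoke Lemma \ref{lem:mut}. The lemma says that for either amenable collection, say $V$, the residue of $\omega_n/(F_1\cdots F_k)$ along $X^\vee$ pulls back under $\phi_V$ to $(2\pi i)^k\omega_{n-k}$ (up to the stated constant). Crucially, the ambient form $\omega_n$ on $(\mathbb{C}^\times)^n$ and the complete-intersection equations cutting out $X^\vee$ are intrinsic to the nef partition $E_1,\dots,E_{k+1}$, \emph{not} to the choice of amenable collection: both $V$ and $V'$ present the same $X^\vee$ sitting in the same torus with the same torus-invariant form. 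Therefore the lemma applies verbatim to both $V$ and $V'$, yielding
\begin{equation*}
\phi_V^*\,\mathrm{Res}_{X^\vee}\!\left(\frac{\omega_n}{F_1\cdots F_k}\right)=(2\pi i)^k\omega_{n-k},\qquad
\phi_{V'}^*\,\mathrm{Res}_{X^\vee}\!\left(\frac{\omega_n}{F_1'\cdots F_k'}\right)=(2\pi i)^k\omega_{n-k}.
\end{equation*}
To conclude, I would observe that the residue form on $X^\vee$ is the same in both equations: it is the Poincar\'e residue of the intrinsic meromorphic form $\omega_n$ along the fixed subvariety $X^\vee$, which does not depend on how we coordinatize the ambient torus. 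Then chasing the two identities through $\phi_V^{-1}\cdot\phi_{V'}$ gives
$$
(\phi_V^{-1}\cdot\phi_{V'})^*\omega_{n-k}=(\phi_{V'})^*(\phi_V^{-1})^*\omega_{n-k}=\omega_{n-k},
$$
since $(\phi_V^{-1})^*\omega_{n-k}=(2\pi i)^{-k}\mathrm{Res}_{X^\vee}(\omega_n/\prod F_i)$ and applying $\phi_{V'}^*$ returns $\omega_{n-k}$ by the second identity.

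\textbf{Anticipated obstacle.} The delicate point is justifying that the two residues genuinely coincide as forms on $X^\vee$, rather than merely up to a scalar or a sign. The issue is that the defining equations $F_i$ built from $V$ and the equations $F_i'$ built from $V'$ are different Laurent polynomials, each a $\mathbb{C}^\times$-linear combination realizing the same hypersurface $\{\sum_{\rho\in E_i}a_\rho x^\rho=1\}$; one must confirm that the Poincar\'e residue of $\omega_n$ along the common complete intersection $X^\vee$ is independent of which regular defining sequence one uses, which holds because the residue depends only on the form $\omega_n$ and the subvariety, not on the presentation. Establishing this cleanly — tracking the basis-change matrix $Q$ from the Mutations subsection and verifying it acts with determinant compatible with preserving $\omega_{n-k}$ — is the step that requires care, and it is precisely what Lemma \ref{lem:mut} was engineered to handle by normalizing every residue to the \emph{same} $(2\pi i)^k\omega_{n-k}$ regardless of the collection.
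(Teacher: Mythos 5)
Your proposal is correct and follows essentially the same route as the paper: the Laurentness condition is dispatched via Theorem \ref{thm:almost}, and the form-invariance is obtained by applying Lemma \ref{lem:mut} to both $V$ and $V'$, using that the residue of $\omega_n/(F_1\cdots F_k)$ along $X^\vee$ is intrinsic to the nef partition and then chasing the two identities through $\phi_V^{-1}\cdot\phi_{V'}$ on a common open set. Your extra care about the residue being independent of the coordinatization is a reasonable elaboration of a point the paper treats implicitly (the $F_i$ are literally the same defining equations for both collections), not a different argument.
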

\begin{proof}
It is clear that this map is birational and takes $\phi_{V'}^*w$ to a Laurent polynomial. To see that $\phi_V^{-1} \cdot \phi_{V'}$ preserves the form $\omega_{n-k}$, we note that there is some open subset $U^\vee$ of $X^\vee$ on which both $\phi_V$ and $\phi_{V'}$ induce isomorphisms from open sets $U_V$ and $U_{V'}$ in $(\mathbb{C}^\times)^{n-k}$. In other words, we have isomorphisms $\phi_V^\circ : U_V \xrightarrow{\sim} U^\vee$ and $\phi_{V'}^\circ: U_{V'} \xrightarrow{\sim} U^\vee$ between open sets. From Lemma \ref{lem:mut}, we know that 
\begin{align*}
(\phi^\circ_V)^* \mathrm{Res}_{U^\vee}\left( \frac{\omega_n}{F_1\dots F_k} \right) & = (2\pi_i)^k \omega_{n-k}|_{U_V}\\
(\phi^\circ_{V'})^*\mathrm{Res}_{U^\vee}\left(\frac{\omega_n}{F_1\dots F_k} \right) &= (2\pi_i)^k \omega_{n-k}|_{U_{V'}}
\end{align*}
therefore we must have that on $U_V$, $(\phi_V \cdot \phi_{V'}^{-1})^*(\omega_{n-k}|_{U_{V'}}) = \omega_{n-k}|_{V}$, and thus $\phi_V \cdot \phi_V^{-1}$ is a mutation.
\end{proof}

Of course, Theorem \ref{thm:mut} requires that we start with two amenable collections subordinate to the same nef partition. It is possible to have distinct nef partitions corresponding to the same quasi-Fano variety. It would be interesting to show that if we have two such nef partitions and amenable collections subordinate to each, then there is a mutation between the corresponding Laurent polynomials.

\section{Degenerations of complete intersections in partial flag varieties}
\label{sect:flag}
Now we discuss the question of constructing toric degenerations and Laurent polynomial expressions for Landau-Ginzburg models of complete intersections in partial flag varieties. Recall that the partial flag variety $F(n_1,\dots, n_l,n)$ is a smooth complete Fano variety which parametrizes flags in $V \cong \mathbb{C}^n$,
$$
V_1 \subseteq \dots \subseteq V_l \subseteq V
$$
where $\dim(V_i) = n_i$. The reader may consult \cite{brion} for general facts on partial flag varieties.

According to \cite{bcfkvs1} and \cite{gs}, there are small toric degenerations of the complete flag variety $F(n,n_1,\dots,n_l)$ to Gorenstein Fano toric varieties $P(n,n_1,\dots,n_l)$ which admit small resolutions of singularities. It is suggested in \cite{ps} that the Landau-Ginzburg models of the complete flag variety can be expressed as a Laurent polynomial whose Newton polytope is the polytope $\Delta(n_1,\dots,n_l,n)$ whose face fan determines the toric variety $P(n,n_1,\dots,n_l)$. 

For any Fano complete intersection $X$ in $F(n_1,\dots,n_l,n)$, one obtains a degeneration of $X$ to a nef Cartier complete intersection in the toric variety $P(n,n_1,\dots,n_l)$ and hence conjectural expressions for the Landau-Ginzburg model of $X$ can be given in terms of the Givental Landau-Ginzburg model of complete intersections in $P(n, n_1,\dots,n_l)$. In \cite{ps}, Przyjalkowski and Shramov give a method of constructing birational maps between tori and $X^\vee$ so that the superpotential pulls back to a Laurent polynomial for complete intersections in Grassmannians $\mathrm{Gr}(2,n)$. Here we will use Theorem \ref{thm:main} to show that most nef complete intersections $X$ in $P(n,n_1,\dots,n_l)$ admit an amenable toric degeneration, which express the Givental Landau-Ginzburg model of $X$ as a Laurent polynomial.

\subsection{The structure of $P(n_1,\dots,n_l,n)$}
\label{sect:Pn1}
In order to construct the toric variety to which $F(n_1,\dots,n_l,n)$ degenerates, we begin with an external combinatorial construction presented in \cite{bcfkvs1}. We define a graph $\Gamma(n_1,\dots,n_l,n)$. Let us take an $n \times n$ box in the Euclidean plane with lower left corner placed at the point $(-1/2,-1/2)$. Let $k_{l +1}= n - n_l$, let $k_i = n_{i} - n_{i-1}$, and $k_1 = n_1$. Along the diagonal of this box moving from the bottom right corner to the top left corner, we place boxes of size $k_i \times k_i$ sequentially from $1$ to ${l+1}$. The region below these boxes is then divided equally into $1 \times 1$ boxes along grid lines, as shown in the first part of Figure \ref{fig:graph}.

\begin{figure}

\begin{center}
\begin{tikzpicture}[scale = 0.6]

\draw[dashed] (-0.5,7.5) to (7.5,7.5);
\draw[dashed] (-0.5,7.5) to (-0.5,-0.5);
\draw[dashed] (7.5,7.5) to (7.5,-0.5);
\draw[dashed] (-0.5,-0.5) to (7.5,-0.5);

\draw[dashed] (2.5,-0.5) to (2.5,7.5);
\draw[dashed] (7.5,7.5) to (7.5,-0.5);

\draw[dashed] (-0.5,4.5) to (5.5,4.5);
\draw[dashed] (-0.5,3.5) to (2.5,3.5);
\draw[dashed] (-0.5,2.5) to (2.5,2.5);
\draw[dashed] (-0.5,1.5) to (7.5,1.5);
\draw[dashed] (-0.5,0.5) to (5.5,0.5);

\draw[dashed] (5.5,4.5) to (5.5,-0.5);

\draw[dashed] (0.5,4.5) to (0.5,-0.5);
\draw[dashed] (1.5,4.5) to (1.5,-0.5);

\draw[dashed] (3.5,1.5) to (3.5,-0.5);
\draw[dashed] (4.5,1.5) to (4.5,-0.5);
\end{tikzpicture}
%
%
\begin{tikzpicture}[scale = 0.6]

\draw(0,0) node[circle,fill,inner sep=1pt](){};
\draw(0,1) node[circle,fill,inner sep=1pt](){};
\draw(0,2) node[circle,fill,inner sep=1pt](){};
\draw(0,3) node[circle,fill,inner sep=1pt](){};
\draw(0,4) node[circle,fill,inner sep=1pt](){};

\draw(1,0) node[circle,fill,inner sep=1pt](){};
\draw(1,1) node[circle,fill,inner sep=1pt](){};
\draw(1,2) node[circle,fill,inner sep=1pt](){};
\draw(1,3) node[circle,fill,inner sep=1pt](){};
\draw(1,4) node[circle,fill,inner sep=1pt](){};

\draw(2,0) node[circle,fill,inner sep=1pt](){};
\draw(2,1) node[circle,fill,inner sep=1pt](){};
\draw(2,2) node[circle,fill,inner sep=1pt](){};
\draw(2,3) node[circle,fill,inner sep=1pt](){};
\draw(2,4) node[circle,fill,inner sep=1pt](){};

\draw(3,1) node[circle,fill,inner sep=1pt](){};
\draw(3,0) node[circle,fill,inner sep=1pt](){};

\draw(4,1) node[circle,fill,inner sep=1pt](){};
\draw(4,0) node[circle,fill,inner sep=1pt](){};

\draw(5,1) node[circle,fill,inner sep=1pt](){};
\draw(5,0) node[circle,fill,inner sep=1pt](){};

\draw(6,0) node[circle,inner sep=1pt, draw](){};

\draw(0,5) node[circle,inner sep=1pt, draw](){};
\draw(3,2) node[circle,inner sep=1pt, draw](){};

\draw[dashed] (-0.5,7.5) to (7.5,7.5);
\draw[dashed] (-0.5,7.5) to (-0.5,-0.5);
\draw[dashed] (7.5,7.5) to (7.5,-0.5);
\draw[dashed] (-0.5,-0.5) to (7.5,-0.5);

\draw[dashed] (2.5,-0.5) to (2.5,7.5);
\draw[dashed] (7.5,7.5) to (7.5,-0.5);

\draw[dashed] (-0.5,4.5) to (5.5,4.5);
\draw[dashed] (-0.5,3.5) to (2.5,3.5);
\draw[dashed] (-0.5,2.5) to (2.5,2.5);
\draw[dashed] (-0.5,1.5) to (7.5,1.5);
\draw[dashed] (-0.5,0.5) to (5.5,0.5);

\draw[dashed] (5.5,4.5) to (5.5,-0.5);

\draw[dashed] (0.5,4.5) to (0.5,-0.5);
\draw[dashed] (1.5,4.5) to (1.5,-0.5);

\draw[dashed] (3.5,1.5) to (3.5,-0.5);
\draw[dashed] (4.5,1.5) to (4.5,-0.5);
\end{tikzpicture}
\begin{tikzpicture}[scale=0.6]
\draw[->] (0,4.95) to (0,4.05);
\draw[->] (0,4) to (0,3.05);
\draw[->] (0,3) to (0,2.05);
\draw[->] (0,2) to (0,1.05);
\draw[->] (0,1) to (0,0.05);
\draw[->] (0,4) to (0.95,4);
\draw[->] (1,4) to (1.95,4);
\draw[->] (1,4) to (1,3.05);
\draw[->] (1,3) to (1,2.05);
\draw[->] (1,2) to (1,1.05);
\draw[->] (1,1) to (1,0.05);
\draw[->] (2,4) to (2,3.05);
\draw[->] (2,3) to (2,2.05);
\draw[->] (2,2) to (2,1.05);
\draw[->] (2,1) to (2,0.05);
\draw[->] (0,3) to (0.95,3);
\draw[->] (1,3) to (1.95,3);
\draw[->] (0,2) to (.95,2); 
\draw[->] (1,2) to (1.95,2); 
\draw[->] (2,2) to (2.95,2);
\draw[->] (0,1) to (0.95,1); \draw[->] (1,1) to (1.95,1); \draw[->] (2,1) to (2.95,1); \draw[->] (3,1) to (3.95,1);\draw[->] (4,1) to (4.95,1);
\draw[->] (3,1.95) to (3,1.05); \draw[->] (3,1) to (3,0.05);
\draw[->] (0,0) to (.95,0); \draw[->] (1,0) to (1.95,0); \draw[->] (2,0) to (2.95,0); \draw[->] (3,0) to (3.95,0);\draw[->] (4,0) to (4.95,0);\draw[->] (5,0) to (5.95,0);
\draw[->] (4,1) to (4,0.05); \draw[->] (5,1) to (5,0.05);

\draw(0,0) node[circle,fill,inner sep=1pt](){};
\draw(0,1) node[circle,fill,inner sep=1pt](){};
\draw(0,2) node[circle,fill,inner sep=1pt](){};
\draw(0,3) node[circle,fill,inner sep=1pt](){};
\draw(0,4) node[circle,fill,inner sep=1pt](){};

\draw(1,0) node[circle,fill,inner sep=1pt](){};
\draw(1,1) node[circle,fill,inner sep=1pt](){};
\draw(1,2) node[circle,fill,inner sep=1pt](){};
\draw(1,3) node[circle,fill,inner sep=1pt](){};
\draw(1,4) node[circle,fill,inner sep=1pt](){};

\draw(2,0) node[circle,fill,inner sep=1pt](){};
\draw(2,1) node[circle,fill,inner sep=1pt](){};
\draw(2,2) node[circle,fill,inner sep=1pt](){};
\draw(2,3) node[circle,fill,inner sep=1pt](){};
\draw(2,4) node[circle,fill,inner sep=1pt](){};

\draw(3,1) node[circle,fill,inner sep=1pt](){};
\draw(3,0) node[circle,fill,inner sep=1pt](){};

\draw(4,1) node[circle,fill,inner sep=1pt](){};
\draw(4,0) node[circle,fill,inner sep=1pt](){};

\draw(5,1) node[circle,fill,inner sep=1pt](){};
\draw(5,0) node[circle,fill,inner sep=1pt](){};

\draw(6,0) node[circle,inner sep=1pt, draw](){};

\draw(0,5) node[circle,inner sep=1pt, draw](){};
\draw(3,2) node[circle,inner sep=1pt, draw](){};

\draw[dashed] (-0.5,7.5) to (7.5,7.5);
\draw[dashed] (-0.5,7.5) to (-0.5,-0.5);
\draw[dashed] (7.5,7.5) to (7.5,-0.5);
\draw[dashed] (-0.5,-0.5) to (7.5,-0.5);

\draw[dashed] (2.5,-0.5) to (2.5,7.5);
\draw[dashed] (7.5,7.5) to (7.5,-0.5);

\draw[dashed] (-0.5,4.5) to (5.5,4.5);
\draw[dashed] (-0.5,3.5) to (2.5,3.5);
\draw[dashed] (-0.5,2.5) to (2.5,2.5);
\draw[dashed] (-0.5,1.5) to (7.5,1.5);
\draw[dashed] (-0.5,0.5) to (5.5,0.5);

\draw[dashed] (5.5,4.5) to (5.5,-0.5);

\draw[dashed] (0.5,4.5) to (0.5,-0.5);
\draw[dashed] (1.5,4.5) to (1.5,-0.5);

\draw[dashed] (3.5,1.5) to (3.5,-0.5);
\draw[dashed] (4.5,1.5) to (4.5,-0.5);
\end{tikzpicture}

\end{center}
\caption{The grid, nodes and graph of $\Gamma(2,5,8)$}
\label{fig:graph}
\end{figure}
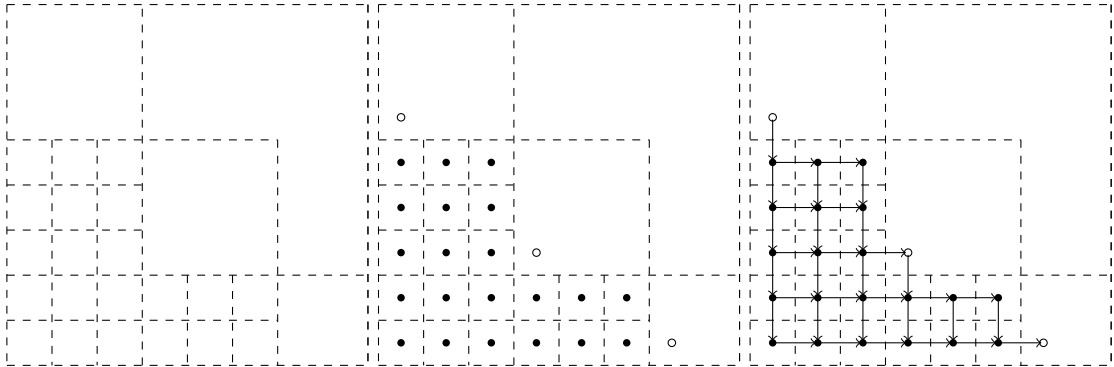
From this grid, we construct a directed graph with black and white vertices. Assume that the centers of each of the $1 \times 1$ boxes beneath the diagonal are at integral points in the $(x,y)$ plane so that the center of the bottom left box is at the origin. At the center of each $1 \times 1$ box beneath the diagonal, we place black points. In each box $B$ on the diagonal, we insert a white point shifted by $(1/2,1/2)$ from the bottom left corner of $B$. See the second part of Figure \ref{fig:graph} as an example.

We then draw arrows between each vertex $u$ and any other vertex $v$ which can be obtained from $u$ by a shift of $v$ by either $(1,0)$ or $(0,-1)$ directed from left to right or from top to bottom, as in the third part of Figure \ref{fig:graph}. Let $D$ be the set of black vertices, and let $S$ be the set of white vertices. In the language of \cite{bcfkvs1}, the elements of $S$ are called stars. Let $E$ denote the set of edges of $\Gamma(n_1,\dots,n_l,n)$. We will denote the vertex at a point $(m,n) \in \mathbb{Z}_{\geq 0}^2$ by $v_{m,n}$ and an arrow between points $v_{m_1,n_1}$ and $v_{m_2,n_2}$ by $(v_{m_1,n_1} \rightarrow v_{m_2,n_2})$. We have functions 
$$
h : E \rightarrow D \cup S \text{ and } t: E \rightarrow D \cup S
$$
assigning to an arrow the vertex corresponding to its head and tail respectively.

The polytope $\Delta(n_1,\dots,n_l,n)$ is then constructed as a polytope in the lattice $M = \mathbb{Z}^{D}$ as the convex hull of points corresponding to edges $E$, which we construct as follows. If $d \in D$, then let $e_d$ be the associated basis vector for $M$, and formally define $e_s$ to be the origin for $s \in S$. If $\alpha$ is an edge of $\Gamma(n_1,\dots,n_l,n)$, then to $\alpha$, we associate the point in $M$ given by 
$$
p_{\alpha} = e_{h(\alpha)} - e_{t(\alpha)}.
$$

\begin{defn}
The polytope $\Delta(n_1,\dots,n_l,n)$ is the convex hull of the points $p_\alpha$ for all $\alpha \in E$.
\end{defn}

We rapidly review properties of $\Delta(n_1,\dots,n_l,n)$. The toric variety $P(n_1,\dots,n_l,n)$ is toric variety associated to the fan over faces of $\Delta(n_1,\dots,n_l,n)$, and it has torus invariant Weil divisors associated to each vertex $v$, which correspond directly to the points $p_\alpha$ for $\alpha \in E$. We will refer to the divisor corresponding to the arrow $\alpha$ as $D_\alpha$.

Torus invariant Cartier divisors $\sum_{\alpha \in \Delta[0]} n_\alpha D_\alpha$ correspond to piecewise linear functions $\varphi$ which are $\Sigma$-linear so that $\varphi(q_\alpha) = n_\alpha$ for all $q_\alpha$. In Lemma 3.2.2 of \cite{bcfkvs1}, Cartier divisors which generate $\Pic(P(n_1,\dots,n_l,n))$ are given. We will now describe these divisors.

\begin{defn}
A {\it roof} $\mathcal{R}_i$ for $i \in \{1,\dots, l \}$ is a collection of edges which have either no edges above or to the right, and which span a path between two sequential white vertices of $\Gamma(n_1,\dots,n_l,n)$.
\end{defn}
\begin{figure}
\begin{center}
\begin{tikzpicture}[scale=0.8]

\draw[ultra thick,->] (0,4.95) to (0,4.05);

\draw[ultra thick,->] (0,4) to (0.95,4);
\draw[ultra thick, ->] (1,4) to (1.95,4);

\draw[ultra thick, ->] (2,4) to (2,3.05);
\draw[ultra thick, ->] (2,3) to (2,2.05);
 
\draw[ultra thick, ->] (2,2) to (2.95,2);
%
%

\draw(0,0) node[circle,fill,inner sep=1pt](){};
\draw(0,1) node[circle,fill,inner sep=1pt](){};
\draw(0,2) node[circle,fill,inner sep=1pt](){};
\draw(0,3) node[circle,fill,inner sep=1pt](){};
\draw(0,4) node[circle,fill,inner sep=1pt](){};

\draw(1,0) node[circle,fill,inner sep=1pt](){};
\draw(1,1) node[circle,fill,inner sep=1pt](){};
\draw(1,2) node[circle,fill,inner sep=1pt](){};
\draw(1,3) node[circle,fill,inner sep=1pt](){};
\draw(1,4) node[circle,fill,inner sep=1pt](){};

\draw(2,0) node[circle,fill,inner sep=1pt](){};
\draw(2,1) node[circle,fill,inner sep=1pt](){};
\draw(2,2) node[circle,fill,inner sep=1pt](){};
\draw(2,3) node[circle,fill,inner sep=1pt](){};
\draw(2,4) node[circle,fill,inner sep=1pt](){};

\draw(3,1) node[circle,fill,inner sep=1pt](){};
\draw(3,0) node[circle,fill,inner sep=1pt](){};

\draw(4,1) node[circle,fill,inner sep=1pt](){};
\draw(4,0) node[circle,fill,inner sep=1pt](){};

\draw(5,1) node[circle,fill,inner sep=1pt](){};
\draw(5,0) node[circle,fill,inner sep=1pt](){};

\draw(6,0) node[circle,inner sep=1pt, draw](){};

\draw(0,5) node[circle,inner sep=1pt, draw](){};
\draw(3,2) node[circle,inner sep=1pt, draw](){};

\end{tikzpicture}
\begin{tikzpicture}[scale=0.8]

%

 
 \draw[ultra thick, ->] (3,1) to (3.95,1);
 \draw[ultra thick, ->] (4,1) to (4.95,1);
\draw[ultra thick, ->] (3,1.95) to (3,1.05);

\draw[ultra thick, ->] (5,0) to (5.95,0);

\draw[ultra thick, ->] (5,1) to (5,0.05);

\draw(0,0) node[circle,fill,inner sep=1pt](){};
\draw(0,1) node[circle,fill,inner sep=1pt](){};
\draw(0,2) node[circle,fill,inner sep=1pt](){};
\draw(0,3) node[circle,fill,inner sep=1pt](){};
\draw(0,4) node[circle,fill,inner sep=1pt](){};

\draw(1,0) node[circle,fill,inner sep=1pt](){};
\draw(1,1) node[circle,fill,inner sep=1pt](){};
\draw(1,2) node[circle,fill,inner sep=1pt](){};
\draw(1,3) node[circle,fill,inner sep=1pt](){};
\draw(1,4) node[circle,fill,inner sep=1pt](){};

\draw(2,0) node[circle,fill,inner sep=1pt](){};
\draw(2,1) node[circle,fill,inner sep=1pt](){};
\draw(2,2) node[circle,fill,inner sep=1pt](){};
\draw(2,3) node[circle,fill,inner sep=1pt](){};
\draw(2,4) node[circle,fill,inner sep=1pt](){};

\draw(3,1) node[circle,fill,inner sep=1pt](){};
\draw(3,0) node[circle,fill,inner sep=1pt](){};

\draw(4,1) node[circle,fill,inner sep=1pt](){};
\draw(4,0) node[circle,fill,inner sep=1pt](){};

\draw(5,1) node[circle,fill,inner sep=1pt](){};
\draw(5,0) node[circle,fill,inner sep=1pt](){};

\draw(6,0) node[circle,inner sep=1pt, draw](){};

\draw(0,5) node[circle,inner sep=1pt, draw](){};
\draw(3,2) node[circle,inner sep=1pt, draw](){};

\end{tikzpicture}
\end{center}
\caption{Roof paths of $\Gamma(2,5,8)$ connecting sequential white vertices.}
\label{fig:roofs}
\end{figure}
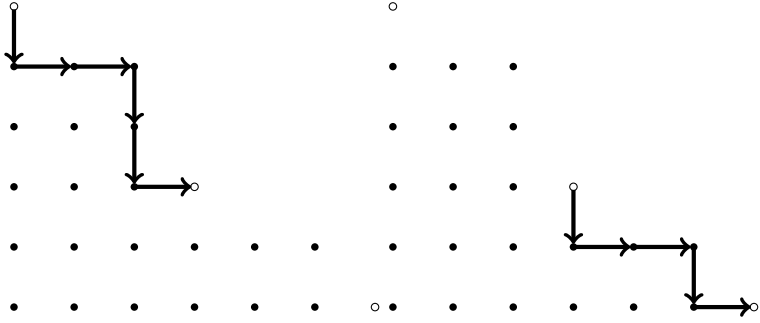
Examples of roofs and the associated paths in $\Gamma(n_1,\dots,n_l,n)$ are shown in Figure \ref{fig:roofs}. Associated to each roof is a set of divisors. Let $\alpha$ be an edge in $\mathcal{R}_i$ and let $U(\alpha)$ be the collection of edges either directly below $\alpha$ if $\alpha$ is a horizontal arrow, or directly to the left of $\alpha$ if $\alpha$ is a vertical arrow. If $D_\beta$ is the Weil divisor of $P(n_1,\dots,n_l,n)$ corresponding to the arrow $\beta$ then it is proven in Lemma 3.2.2 of \cite{bcfkvs1} that the Weil divisor
$$
H_\alpha = \sum_{\beta \in U(\alpha)} D_\beta
$$
is nef and Cartier, and that if we take two edges $\alpha$ and $\alpha'$ in the same roof $\mathcal{R}_i$, then $H_\alpha$ is linearly equivalent to $H_{\alpha'}$. We define $\mathcal{L}_i$ to be the line bundle on $P(n_1,\dots,n_l,n)$ associated to the divisor $H_\alpha$ for $\alpha$ any arrow in $\mathcal{R}_i$. There is an embedding 
$$
\psi: P(n_1,\dots,n_l,n) \hookrightarrow \mathbb{P}^{N_1-1} \times \dots \times \mathbb{P}^{N_l-1}
$$
where $N_i = {n \choose n_i}$. This map is comes from the product of the morphisms determined by each $\mathcal{L}_i$ (see Theorem 3.2.13 of \cite{bcfkvs1}). By work of Gonciulea and Lakshmibai \cite{gs}, the Pl\"ucker embedding
$$
\phi: F(n_1,\dots,n_l,n) \hookrightarrow \mathbb{P}^{N_1-1} \times \dots \times \mathbb{P}^{N_l-1}
$$
gives a flat degeneration to the image of $\psi$. The divisors $C_i$ on $F(n_1,\dots,n_l,n)$ obtained by pulling back $\mathbb{P}^{N_1-1} \times \dots \times h_i \times \dots \times \mathbb{P}^{N_l-1}$ where $h_i$ is a generic hyperplane in $\mathbb{P}^{N_i-1}$ along $\phi$ form the Schubert basis of the Picard group of $F(n_1,\dots, n_k,n)$, and the ample cone is the interior of the cone generated over $\mathbb{R}_{\geq 0}$ by classes $C_i$ (see Proposition 1.4.1 of \cite{brion}). Furthermore, the anticanonical bundle divisor of $F(n_1,\dots,n_l,n)$ is given by
$$
-K_F = \sum_{i=1}^{l}m_i C_i.
$$
Here $m_i$ is the number of edges in the $i^{\mathrm{th}}$ roof of $\Gamma(n,n_1,\dots,n_l)$. We choose multi-degrees $\overline{d_i} = (d_i^{(1)},\dots, d_i^{(l)})$ for integers $1 \leq i \leq k$ so that $\sum_{i=1}^k d_i^{(j)} < m_j$. Let $\overline{d}$ denote this set of multidegrees. Then let $Z_{\overline{d_i}}$ be the intersection of $F(n_1,\dots,n_l,n)$ with a generic divisor of multi-degree $\overline{d_i}$ in $\mathbb{P}^{N_1-1} \times \dots \times\mathbb{P}^{N_l-1}$ under the emebedding $\phi$. The complete intersection $X_{\overline{d}}$ in $F(n_1,\dots,n_k,n)$ of the divisors $Z_{\overline{d_i}}$ is Fano, since by the adjunction formula, $-K_{X} =\left( \sum_{i=1}^l n_i C_i\right)|_X$ for $n_i  = m_i - \sum_{j=1}^k d_{j}^{(i)}>1$ is the restriction of a very ample divisor on $F(n_1,\dots,n_l,n)$. 

If we keep the divisors $Z_{\overline{d_i}}$ fixed and let $F(n_1,\dots,n_l,n)$ degenerate to $P(n_1,\dots,n_l,n)$, we obtain a natural degeneration of $X_{\overline{d}}$ to a generic complete intersection $X'_{\overline{d}}$ in $P(n_1,\dots,n_l,n)$ cut out by the vanishing locus of a non-degenerate global section of $\bigoplus_{i=1}^l \mathcal{O}(\sum_{j=1}^k \mathcal{L}_j^{d_j^{(i)}})$.

We may now associate $X'_{\overline{d}}$ to a nef partition of $\Delta(n_1,\dots,n_l,n)$. For each $\overline{d_i}$, choose a set $\mathcal{U}_{i,j}$ of $d_i^{(j)}$ vectors $\alpha \in \mathcal{R}_j$ in such a way that the sets $\mathcal{U}_{i,j}$ have pairwise empty intersection and so that no $\mathcal{U}_{i,j}$ contains an arrow $\alpha$ so that $h(\alpha)$ is a white vertex.

It is possible to choose sets this way since $\sum_{i=1}^k d_{i}^{(j)} < m_j$. Let $\mathcal{U}_i = \cup_{j=1}^l \mathcal{U}_{i,j}$. Thus we have divisors
$$
H_i = \sum_{\alpha \in \mathcal{U}_i} H_\alpha
$$
which are nef Cartier divisors on $P(n_1,\dots,n_l,n)$ linearly equivalent to the divisors $Z_{\overline{d_i}}$ restricted to $P(n_1,\dots,n_l,n)$ in $\mathbb{P}^{N_1-1} \times \dots \times \mathbb{P}^{N_l-1}$. Furthermore, $H_i$ correspond to a nef partition of $\Delta(n_1,\dots,n_l,n)$. Let $\mathcal{U}_{k+1} = (\cup_{i=1}^l \mathcal{R}_i )\setminus (\cup_{i=1}^k \mathcal{U}_i)$. Note $\mathcal{U}_{k+1}$ contains all arrows $\alpha$ with $h(\alpha)$ a white vertex. Then the sets
$$
E_ i := \bigcup_{\alpha \in \mathcal{U}_i} U(\alpha) 
$$
define a nef partition of $\Delta(n_1,\dots,n_l,n)$. We have the standard generating set of regular functions on $(\mathbb{C}^\times)^{D}$ written as $x_{m,n}$ associated to black vertices $v_{m,n}$ of $\Gamma(n_1,\dots,n_l,n)$. The monomial associated to an arrow $\alpha$ is  
$$
x^\alpha = \dfrac{x_{h(\alpha)}}{x_{t(\alpha)}}
$$
and we define the Givental Landau-Ginzburg mirror of $X_{\overline{d}}$ to be the complete intersection $X^\vee_{\overline{d}}$
$$
1 = \sum_{\alpha \in \mathcal{U}_i} a_\alpha x^\alpha
$$
for $1 \leq i \leq k$ equipped with superpotential
$$
w = \sum_{\alpha \in \mathcal{U}_{k+1}}a_\alpha x^\alpha.
$$
Here the coefficients $a_\alpha$ should be chosen so that they satisfy the so-called box equations and roof equations of Section 5.1 of \cite{bcfkvs1}.
\subsection{Associated amenable collections}

An element $\ell$ of $N = \Hom(M,\mathbb{Z})$ is determined by the number that it assigns to each generator of $M$. Since we have associated to each black vertex of $\Gamma(n_1,\dots,n_l,n)$ a generator $e_d$, and we have formally set $e_s$ to be the origin for $s \in S$ a white vertex, an element of $N$ just assigns to each black vertex of $\Gamma(n_1,\dots,n_l,n)$ some integer, and assigns the value $0$ uniformly to all white vertices. To the points in $\Delta(n_1,\dots,n_l,n)$ determined by edges $\alpha$ of $\Gamma(n_1,\dots,n_l,n)$, the linear operator $\ell$ assigns the number
$$
\ell(p_\alpha) = \ell(e_{h(\alpha)}) - \ell(e_{t(\alpha)}).
$$
Therefore, each $\ell \in N$ is simply a rule that assigns to each black vertex of $\Gamma(n_1,\dots,n_l,n)$ an integer so that the resulting value associated to the arrows in each $E_j$ is $(-1)$, takes non-negative values elsewhere, and takes the value $0$ on $E_k$ for $k < j$. Our task now is to choose carefully an amenable collection of vectors associated to a given nef partition. We will first describe this process for a single $\alpha$ in $\mathcal{R}_i$. There are two distinct cases to deal with:
\begin{enumerate}
\item The edge $\alpha$ is horizontal and $t(\alpha)$ and $h(\alpha)$ are black vertices.
\item The edge $\alpha$ is vertical.
\end{enumerate}
We treat these cases separately then combine them to produce the desired function. Let us take two white vertices of $\Gamma(n_1,\dots,n_l,n)$ located at points $(m_0,n_0)$ and $(m_1,n_1)$ so that there is no white vertex $(m_2,n_2)$ with $m_0\leq m_2\leq m_1$ and $n_1 \leq n_2 \leq n_0$, and let $\alpha$ be an edge in the roof between $(m_1,n_1)$ and $(m_2,n_2)$.

\begin{enumerate}
\item Let $\alpha$ be a vertical arrow so that $\alpha = (v_{m,n} \rightarrow v_{m,n-1})$ for $m_0 \leq m \leq m_1-1$ and $n_1 -1 \leq n \leq n_0$. Then we define the function $\ell_\alpha$ so that 
$$
\ell_\alpha(e_{(i,j)}) = \left\{
	\begin{array}{rl}
		-1  & \mbox{if }   i \leq n_1-1 \mbox{ and } j \leq  m-1\\
		 0  & \mbox{otherwise }
	\end{array}
\right.
$$
We can check the value of $\ell_\alpha$ on vertical arrows
$$
\ell_{\alpha}(e_{(i,j)})- \ell_\alpha(e_{(i,j-1)})  = \left\{
	\begin{array}{rl}
		-1  & \mbox{if } j = n  \\
		 0  & \mbox{otherwise }
	\end{array}
\right.
$$
and on horizontal arrows,
$$
\ell_{\alpha}(e_{(i,j)})- \ell_\alpha(e_{(i+1,j)})  = \left\{
	\begin{array}{rl}
		 1  & \mbox{if } i =  m_1-1 \\
		 0  & \mbox{otherwise }
	\end{array}
\right.
$$
Thus $\ell_\alpha$ takes value $(-1)$ only on elements of $U(\alpha)$ and takes positive values only at arrows $(v_{n_1-1,j} \rightarrow v_{n_1,j})$.

\item Now let us take some vector $\alpha \in \mathcal{R}_i$ so that $\alpha = (v_{m,n_0-1} \rightarrow v_{m+1,n_0-1})$ for $m_0 \leq m \leq m_1 - 2$. Define $\ell_\alpha$ on the basis $e_{(i,j)}$ so that 
$$
\ell_\alpha(e_{(i,j)}) = \left\{
	\begin{array}{rl}
		-1  & \mbox{if } m+1 \leq i \leq m_1-1  \mbox{ and } j \leq  n_0-1\\
		 0  & \mbox{otherwise }
	\end{array}
\right.
$$
Thus 
$$
\ell_{\alpha}(e_{(i,j)})- \ell_\alpha(e_{(i+1,j)})  = \left\{
	\begin{array}{rl}
		-1  & \mbox{if } i =  m \\
		 1  & \mbox{if } i =  m_1 - 1 \\
		 0  & \mbox{otherwise }
	\end{array}
\right.
$$
and for any vertical arrow,
$$
\ell_{\alpha}(e_{(i,j)})- \ell_\alpha(e_{(i1,j)})  = 0 
$$
\end{enumerate}

\begin{figure}
\begin{center}
\begin{tikzpicture}[scale = 1.1]

\draw[->] (0,4.95) -- node[right] {\tiny } (0,4.05);
\draw[ ->] (0,4) -- node[right] {\tiny } (0,3.05);
\draw[->] (0,3) -- node[right] {\tiny} (0,2.05);
\draw[->] (0,2) -- node[right] {\tiny} (0,1.05);
\draw[->] (0,1) -- node[right] {\tiny} (0,0.05);
\draw[->] (0,4) -- node[above] {\tiny } (0.95,4);
\draw[ultra thick,->] (1,4) -- node[below] {\tiny -1} node[above] {\tiny $\alpha_1$} (1.95,4);
\draw[->] (1,4) -- node[right] {\tiny} (1,3.05);
\draw[->] (1,3) -- node[right] {\tiny} (1,2.05);
\draw[->] (1,2) -- node[right] {\tiny} (1,1.05);
\draw[->] (1,1) -- node[right] {\tiny} (1,0.05);
\draw[->] (2,4) -- node[right] {\tiny } (2,3.05);
\draw[->] (2,3) -- node[right] {\tiny} (2,2.05);
\draw[->] (2,2) -- node[right] {\tiny} (2,1.05);
\draw[->] (2,1) -- node[right] {\tiny} (2,0.05);
\draw[->] (0,3) -- node[above] {\tiny} (0.95,3);
\draw[->] (1,3) -- node[below] {\tiny -1 } (1.95,3);
\draw[->] (0,2) -- node[above] {\tiny} (.95,2); 
\draw[->] (1,2) -- node[below] {\tiny -1} (1.95,2); 
\draw[->] (2,2) -- node[below]{\tiny 1} (2.95,2);
\draw[->] (0,1) -- node[above]{\tiny } (0.95,1); 
\draw[->] (1,1) -- node[below] {\tiny -1} (1.95,1); 
\draw[->] (2,1) -- node[below] {\tiny 1} (2.95,1); 
\draw[->] (3,1) -- node[above] {\tiny } (3.95,1);
\draw[->] (4,1) -- node[above] {\tiny } (4.95,1);
\draw[->] (3,1.95) -- node[right] {\tiny} (3,1.05); 
\draw[->] (3,1) -- node[right] {\tiny } (3,0.05);
\draw[->] (0,0) -- node[above]{ \tiny } (.95,0); 
\draw[->] (1,0) -- node[below] {\tiny -1 } (1.95,0); 
\draw[->] (2,0) -- node[below] {\tiny 1} (2.95,0); 
\draw[->] (3,0) -- node[above] {\tiny} (3.95,0);
\draw[->] (4,0) -- node[above] {\tiny } (4.95,0);
\draw[->] (5,0) -- node[above] {\tiny } (5.95,0);
\draw[->] (4,1) -- node[right] {\tiny } (4,0.05); 
\draw[->] (5,1) -- node[right] {\tiny } (5,0.05);

\draw(0,0) node[circle,fill,inner sep=1pt, label = below left:\tiny ](){};
\draw(0,1) node[circle,fill,inner sep=1pt, label = below left:\tiny ](){};
\draw(0,2) node[circle,fill,inner sep=1pt, label = below left: \tiny](){};
\draw(0,3) node[circle,fill,inner sep=1pt, label = below left: \tiny](){};
\draw(0,4) node[circle,fill,inner sep=1pt, label = below left: \tiny](){};

\draw(1,0) node[circle,fill,inner sep=1pt, label = below left : \tiny](){};
\draw(1,1) node[circle,fill,inner sep=1pt, label = below left : \tiny](){};
\draw(1,2) node[circle,fill,inner sep=1pt, label = below left : \tiny](){};
\draw(1,3) node[circle,fill,inner sep=1pt, label = below left : \tiny](){};
\draw(1,4) node[circle,fill,inner sep=1pt, label = below left : \tiny](){};

\draw(2,0) node[circle,fill,inner sep=1pt, label = above right : \tiny -1](){};
\draw(2,1) node[circle,fill,inner sep=1pt, label = above right : \tiny-1](){};
\draw(2,2) node[circle,fill,inner sep=1pt, label = above right : \tiny-1](){};
\draw(2,3) node[circle,fill,inner sep=1pt, label = above right : \tiny-1](){};
\draw(2,4) node[circle,fill,inner sep=1pt, label = above right : \tiny-1](){};

\draw(3,1) node[circle,fill,inner sep=1pt, label = below left : \tiny](){};
\draw(3,0) node[circle,fill,inner sep=1pt, label = below left : \tiny](){};

\draw(4,1) node[circle,fill,inner sep=1pt,label = below left : \tiny ](){};
\draw(4,0) node[circle,fill,inner sep=1pt, label = below left : \tiny ](){};

\draw(5,1) node[circle,fill,inner sep=1pt, label = below left : \tiny](){};
\draw(5,0) node[circle,fill,inner sep=1pt, label = below left : \tiny](){};

\draw(6,0) node[circle,inner sep=1pt, draw, label = below left :\tiny ](){};

\draw(0,5) node[circle,inner sep=1pt, draw, label = below left:{\tiny}](){};
\draw(3,2) node[circle,inner sep=1pt, draw, label = below left: \tiny ](){};

\end{tikzpicture}
\begin{tikzpicture}[scale = 1.1]

\draw[->] (0,4.95) -- node[right] {\tiny } (0,4.05);
\draw[ ->] (0,4) -- node[right] {\tiny } (0,3.05);
\draw[->] (0,3) -- node[left] {\tiny-1 } (0,2.05);
\draw[->] (0,2) -- node[right] {\tiny} (0,1.05);
\draw[->] (0,1) -- node[right] {\tiny} (0,0.05);
\draw[->] (0,4) -- node[above] {\tiny } (0.95,4);
\draw[ ->] (1,4) -- node[above] {\tiny } (1.95,4);
\draw[->] (1,4) -- node[right] {\tiny} (1,3.05);
\draw[->] (1,3) -- node[left] {\tiny -1} (1,2.05);
\draw[->] (1,2) -- node[right] {\tiny} (1,1.05);
\draw[->] (1,1) -- node[right] {\tiny} (1,0.05);
\draw[ ->] (2,4) -- node[right] {\tiny } (2,3.05);
\draw[ultra thick, ->] (2,3) -- node[left] {\tiny -1 } node[right] {\tiny $\alpha_2$}  (2,2.05);
\draw[->] (2,2) -- node[right] {\tiny} (2,1.05);
\draw[->] (2,1) -- node[right] {\tiny} (2,0.05);
\draw[->] (0,3) -- node[above] {\tiny} (0.95,3);
\draw[->] (1,3) -- node[above] {\tiny} (1.95,3);
\draw[->] (0,2) -- node[above] {\tiny} (.95,2); 
\draw[->] (1,2) -- node[above] {\tiny} (1.95,2); 
\draw[->] (2,2) -- node[above]{\tiny 1} (2.95,2);
\draw[->] (0,1) -- node[above]{\tiny } (0.95,1); 
\draw[->] (1,1) -- node[above] {\tiny } (1.95,1); 
\draw[->] (2,1) -- node[above] {\tiny 1} (2.95,1); 
\draw[->] (3,1) -- node[above] {\tiny } (3.95,1);
\draw[->] (4,1) -- node[above] {\tiny } (4.95,1);
\draw[->] (3,1.95) -- node[right] {\tiny} (3,1.05); 
\draw[->] (3,1) -- node[right] {\tiny } (3,0.05);
\draw[->] (0,0) -- node[above]{ \tiny } (.95,0); 
\draw[->] (1,0) -- node[above] {\tiny } (1.95,0); 
\draw[->] (2,0) -- node[above] {\tiny 1} (2.95,0); 
\draw[->] (3,0) -- node[above] {\tiny} (3.95,0);
\draw[->] (4,0) -- node[above] {\tiny } (4.95,0);
\draw[->] (5,0) -- node[above] {\tiny } (5.95,0);
\draw[->] (4,1) -- node[right] {\tiny } (4,0.05); 
\draw[->] (5,1) -- node[right] {\tiny } (5,0.05);

\draw(0,0) node[circle,fill,inner sep=1pt, label = below left:\tiny -1](){};
\draw(0,1) node[circle,fill,inner sep=1pt, label = below left:\tiny -1](){};
\draw(0,2) node[circle,fill,inner sep=1pt, label = below left: \tiny-1](){};
\draw(0,3) node[circle,fill,inner sep=1pt, label = below left: \tiny](){};
\draw(0,4) node[circle,fill,inner sep=1pt, label = below left: \tiny](){};

\draw(1,0) node[circle,fill,inner sep=1pt, label = below left : \tiny-1 ](){};
\draw(1,1) node[circle,fill,inner sep=1pt, label = below left : \tiny-1 ](){};
\draw(1,2) node[circle,fill,inner sep=1pt, label = below left : \tiny-1 ](){};
\draw(1,3) node[circle,fill,inner sep=1pt, label = below left : \tiny](){};
\draw(1,4) node[circle,fill,inner sep=1pt, label = below left : \tiny](){};

\draw(2,0) node[circle,fill,inner sep=1pt, label = below left : \tiny -1](){};
\draw(2,1) node[circle,fill,inner sep=1pt, label = below left : \tiny-1](){};
\draw(2,2) node[circle,fill,inner sep=1pt, label = below left : \tiny-1](){};
\draw(2,3) node[circle,fill,inner sep=1pt, label = below left : \tiny](){};
\draw(2,4) node[circle,fill,inner sep=1pt, label = below left : \tiny](){};

\draw(3,1) node[circle,fill,inner sep=1pt, label = below left : \tiny](){};
\draw(3,0) node[circle,fill,inner sep=1pt, label = below left : \tiny](){};

\draw(4,1) node[circle,fill,inner sep=1pt,label = below left : \tiny ](){};
\draw(4,0) node[circle,fill,inner sep=1pt, label = below left : \tiny ](){};

\draw(5,1) node[circle,fill,inner sep=1pt, label = below left : \tiny](){};
\draw(5,0) node[circle,fill,inner sep=1pt, label = below left : \tiny](){};

\draw(6,0) node[circle,inner sep=1pt, draw, label = below left :\tiny ](){};

\draw(0,5) node[circle,inner sep=1pt, draw, label = below left:{\tiny}](){};
\draw(3,2) node[circle,inner sep=1pt, draw, label = below left: \tiny ](){};

\end{tikzpicture}
\end{center}
\label{fig:fn}
\caption{Functions $\ell_{\alpha_i}$ associated to a horizontal and vertical arrows $\alpha_1,\alpha_2 \in \mathcal{R}_1$ respectively. Vertices and arrows which have not been assigned numbers correspond to vertices and arrows to which $\ell_\alpha$ assigns the number $0$.}
\label{fig:fn1}
\end{figure}
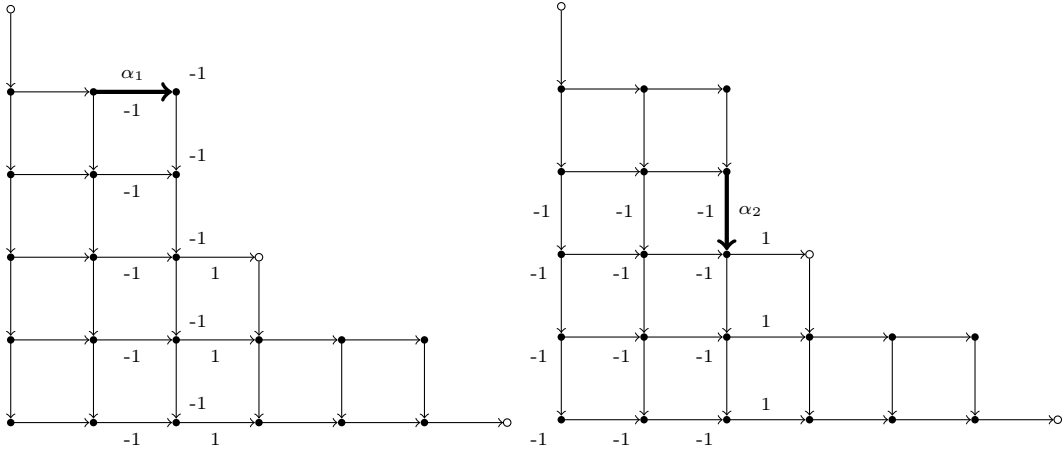

Examples of $\ell_\alpha$ for both vertical and horizontal arrows $\alpha$ are shown in Figure \ref{fig:fn1}. Thus we have chosen $\ell_\alpha \in N$ for each $\alpha \in \mathcal{R}_i$ so that $h(\alpha)$ is not a white vertex, in such a way that $\ell_\alpha$ takes value $(-1)$ only at arrows in $U(\alpha)$ and which takes positive values only at horizontal arrows $(v_{m_0-1,j} \rightarrow v_{m_0,j})$. Thus for any arrows $\alpha_1 \in \mathcal{R}_i$ and $\alpha_2\in \mathcal{R}_j$ for which $h(\alpha_i)$ is not a white vertex, we have $\ell_{\alpha_1}(\alpha) = 0$ for all $\alpha \in U(\alpha_2)$ and $\ell_{\alpha_2}(\alpha) = 0$ for all $\alpha \in U(\alpha_1)$.

Now let us choose some $(k+1)$-partite nef partition of $\Delta(n_1,\dots,n_l,n)$ given by multidegrees $\overline{d}_i = (d_{1}^{(i)},\dots, d_{r}^{(i)})$ so that $\sum_{i=1}^k d_{j}^{(i)} < m_i$. Then, as in Section \ref{sect:Pn1}, we may choose disjoint collections $\mathcal{U}_j$ of vectors in the union of all roofs $\cup_{i=1}^r \mathcal{R}_j$ so that $\mathcal{U}_j \cap \mathcal{R}_i$ is of size $d_{i}^{(j)}$ and so that for all $\mathcal{U}_j$ there is no $\alpha \in \mathcal{U}_j$ for which $h(\alpha)$ is a white vertex. Define 
$$
\ell_{\mathcal{U}_j} = \sum_{\alpha \in \mathcal{U}_j} \ell_\alpha.
$$
and let $E_1,\dots,E_{k+1}$ be the nef partition described in Section \ref{sect:Pn1} associated to the sets $\mathcal{U}_i$.
\begin{proposition}
\label{prop:amenflag}
If we have a $(k+1)$-partite nef partition as described in the preceding paragraph, then the collection of vectors $V = \{ \ell_{\mathcal{U}_1},\dots, \ell_{\mathcal{U}_k}\}$ forms an amenable collection of vectors subordinate to the chosen $(k+1)$-partite nef partition.
\end{proposition}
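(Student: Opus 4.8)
The plan is to verify the three defining conditions of an amenable collection directly for the vectors $v_i := \ell_{\mathcal{U}_i} = \sum_{\alpha \in \mathcal{U}_i} \ell_\alpha$, feeding off three facts already established above: (A) each $\ell_\alpha$ equals $-1$ precisely on the rays of $U(\alpha)$ and is non-negative on every other ray (its values lie in $\{0,1\}$, with the $+1$'s confined to the boundary horizontal arrows); (B) for distinct roof arrows $\alpha \neq \beta$ with non-white heads, $\ell_\alpha$ vanishes identically on $U(\beta)$; and (C) the sets $E_1,\dots,E_{k+1}$ partition the rays of $\Delta(n_1,\dots,n_l,n)$, the collections $\mathcal{U}_1,\dots,\mathcal{U}_k$ are pairwise disjoint and contain no white-headed arrow, while $\mathcal{U}_{k+1}$ absorbs all remaining arrows.

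First I would dispatch condition (1). Fix a ray $\rho \in E_i = \bigcup_{\alpha \in \mathcal{U}_i} U(\alpha)$ and choose $\alpha_0 \in \mathcal{U}_i$ with $\rho \in U(\alpha_0)$. Then $\ell_{\alpha_0}(\rho) = -1$ by (A), while for every other $\alpha \in \mathcal{U}_i$ the arrows $\alpha, \alpha_0$ are distinct and non-white-headed, so (B) forces $\ell_\alpha(\rho) = 0$ because $\rho \in U(\alpha_0)$. Summing over $\mathcal{U}_i$ gives $\langle v_i, \rho \rangle = -1$. Note that (B) also yields, as a by-product, that the $U(\alpha)$ with $\alpha \in \mathcal{U}_i$ are pairwise disjoint, so no double counting occurs.

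Conditions (2) and (3) I would handle by splitting on the block containing the ray. Take $\rho \in E_j$ with $j \le k$ and $j \neq i$, and write $\rho \in U(\beta)$ for some $\beta \in \mathcal{U}_j$. Since $\mathcal{U}_i$ and $\mathcal{U}_j$ are disjoint, every $\alpha \in \mathcal{U}_i$ is distinct from $\beta$, and both are non-white-headed, so (B) gives $\ell_\alpha(\rho) = 0$ and hence $\langle v_i, \rho \rangle = 0$. This settles condition (3) (the case $j < i$) and shows that the $j \le k$ instances of condition (2) are in fact equalities. For the remaining instance of condition (2), take $\rho \in E_{k+1}$; by the disjointness in (C) we have $\rho \notin E_i$, so $\rho$ lies in no $U(\alpha)$ with $\alpha \in \mathcal{U}_i$, whence each $\ell_\alpha(\rho) \ge 0$ by (A) and $\langle v_i, \rho \rangle \ge 0$. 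Since $E_1,\dots,E_{k+1}$ partition the rays, this exhausts all cases.

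The entire argument is thus a direct consequence of (A), (B), and (C), and the genuinely delicate ingredient is (B) in the case where $\alpha$ and $\beta$ lie in the same roof $\mathcal{R}_s$ --- which does occur, since $\mathcal{U}_i$ and $\mathcal{U}_j$ may both meet $\mathcal{R}_s$. I expect this to be the main obstacle: concretely, one must confirm that the $+1$-values of $\ell_\alpha$, which sit on the horizontal arrows entering a white-vertex column, never land on $U(\beta)$ for another non-white-headed $\beta$, equivalently that all such boundary arrows belong to $E_{k+1}$. I would verify this by matching the explicit piecewise formulas of the vertical and horizontal cases against the position of the $U(\beta)$-supports, using precisely that the arrows pointing into white vertices are excluded from $\mathcal{U}_1,\dots,\mathcal{U}_k$ and swept into $\mathcal{U}_{k+1}$.
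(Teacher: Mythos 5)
Your proposal is correct and follows essentially the same route as the paper: both arguments rest on the observation that each $\ell_\alpha$ is $-1$ exactly on $U(\alpha)$, takes its positive values only on arrows lying in $U(\delta)$ for white-headed $\delta$ (hence in $E_{k+1}$, since such $\delta$ are excluded from $\mathcal{U}_1,\dots,\mathcal{U}_k$), and vanishes elsewhere, combined with the pairwise disjointness of the sets $U(\alpha)$. Your write-up merely verifies the three amenability conditions more explicitly than the paper does, and correctly isolates the same delicate point (the location of the $+1$ values) that the paper's proof invokes.
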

\begin{proof}
It is enough to show that $\ell_{\mathcal{U}_i}(\beta) = 0$ for any $\beta \in U(\alpha)$ for $\alpha \in \mathcal{U}_j$ and $j \neq i$. However, this follows easily from the fact that each $\ell_\alpha$ takes the value $(-1)$ at $\beta \in U(\alpha)$, positive values on arrows in $U(\delta)$ with $h(\delta)$ a white vertex and 0 otherwise. Thus $\ell_{\mathcal{U}_j}$ takes values $(-1)$ only at arrows $\beta \in U(\alpha)$ for $\alpha \in \mathcal{U}_j$ and positive values on arrows in $U(\delta)$ with $h(\delta)$ a white vertex and 0 otherwise. We have that $U(\alpha) \cap U(\delta) = \emptyset$ if $\alpha \neq \delta$, thus since $\mathcal{U}_i$ contains no arrow $\delta$ with $h(\delta)$ a white vertex, $\ell_{\mathcal{U}_i}(\beta) = 0$ if $\alpha \in U(\alpha)$ and $\alpha \in \mathcal{U}_j$ with $j \neq i$. 
\end{proof}
Therefore, we may conclude, following Theorem \ref{thm:main}, that

\begin{theorem}
\label{thm:flags}
Let $X'_{\overline{d}}$ be a Fano complete intersection in $P(n_1,\dots,n_l,n)$ determined by a set of multi-degrees $\overline{d}$. Then $X'_{\overline{d}}$ admits a degeneration to a weak Fano toric variety $X_\Sigma$ with at worst Gorenstein singularities. Furthermore, the Givental Landau-Ginzburg model of $X'_{\overline{d}}$ admits a torus map $\phi_{\overline{d}}$ so that the pullback of the superpotential $w$ along $\phi_{\overline{d}}$ is a Laurent polynomial with Newton polytope $\Delta$ so that $X_\Delta = X_\Sigma$.
\end{theorem}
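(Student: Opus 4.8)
The plan is to read this statement as a direct application of the machinery of Section \ref{sect:general} to the specific nef partition and amenable collection built in this section, so that essentially no new argument is needed beyond verifying hypotheses. First I would record the ambient geometric input. The variety $P(n_1,\dots,n_l,n) = Y_{\Delta(n_1,\dots,n_l,n)}$ is a Gorenstein Fano toric variety, and by Lemma 3.2.2 of \cite{bcfkvs1} each $H_\alpha$, hence each $H_i = \sum_{\alpha \in \mathcal{U}_i} H_\alpha$, is nef and Cartier. Consequently the partition $E_1,\dots,E_{k+1}$ constructed in Section \ref{sect:Pn1} is a genuine nef partition, not merely $\mathbb{Q}$-nef; in particular $E_{k+1}$ corresponds to a nef Cartier divisor. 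This places the situation squarely inside the hypotheses of Theorem \ref{cor:Degen} and of condition (1) of Theorem \ref{thm:main}.

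Second, I would pin down why $X'_{\overline{d}}$ is Fano and why the amenable collection exists, both of which trace back to the strict multidegree inequality $\sum_{i} d_i^{(j)} < m_j$. This inequality plays a double role: it guarantees, as noted in Section \ref{sect:Pn1}, that the pairwise-disjoint collections $\mathcal{U}_j$ can be chosen so that no $\alpha \in \mathcal{U}_j$ has $h(\alpha)$ a white vertex, which is exactly the hypothesis feeding Proposition \ref{prop:amenflag}; and it forces the residual multidegrees $m_j - \sum_i d_i^{(j)} \geq 1$ to be positive, so that by the adjunction computation of Section \ref{sect:Pn1} the class $-K_{X'_{\overline{d}}}$ is the restriction of an ample divisor and $X'_{\overline{d}}$ is Fano. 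This is precisely the content of the word ``many'' in the statement.

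Third, and this is where the real input is imported rather than produced, Proposition \ref{prop:amenflag} supplies the amenable collection $V = \{\ell_{\mathcal{U}_1},\dots,\ell_{\mathcal{U}_k}\}$ subordinate to this nef partition. I would then feed everything into Theorem \ref{thm:reflexfano}, which packages Theorem \ref{thm:main} and Theorem \ref{cor:Degen} exactly for a Fano complete intersection attached to a nef partition together with a subordinate amenable collection. Conclusion (1) of that theorem yields the degeneration $X'_{\overline{d}} \rightsquigarrow X_{\Sigma_V}$ onto a crepant partial resolution of $X_{\Delta_V}$; combined with Theorem \ref{cor:Degen} this identifies $X_\Sigma := X_{\Sigma_V}$ as a weak Fano toric variety with at worst Gorenstein singularities. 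Conclusion (2) produces the torus map $\phi_{\overline{d}} := \phi_V \colon (\mathbb{C}^\times)^{n-k} \dashrightarrow X^\vee$ with $\phi_V^* w$ a Laurent polynomial of Newton polytope $\Delta := \Delta_V$. Since $X_{\Delta_V}$ is by construction the anticanonical model of $X_{\Sigma_V}$, this is the sense in which $X_\Delta = X_\Sigma$.

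I do not expect a serious obstacle here: the combinatorial heart of the matter, namely checking that the $\ell_{\mathcal{U}_i}$ satisfy the three defining inequalities of an amenable collection, is already discharged in Proposition \ref{prop:amenflag}, and the geometric consequences are black-boxed in Theorems \ref{thm:main}, \ref{cor:Degen} and \ref{thm:reflexfano}. The only points requiring genuine care are the two verifications above: that the partition is Cartier, so that $\Delta_V$ is reflexive via Remark 1.3 of \cite{bn} and Corollary \ref{cor:DeltaV} applies to identify the Newton polytope with $\Delta_V$; and that the strict multidegree inequality simultaneously secures a valid assignment of the $\mathcal{U}_j$ and the Fano property. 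Everything else is assembly of results already in hand.
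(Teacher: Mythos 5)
Your proposal is correct and follows essentially the same route as the paper: the paper derives Theorem \ref{thm:flags} directly from Proposition \ref{prop:amenflag} together with Theorem \ref{thm:main} (equivalently Theorem \ref{thm:reflexfano} and Theorem \ref{cor:Degen}), with the Cartier property of the $H_i$ from Lemma 3.2.2 of \cite{bcfkvs1} and the strict inequality $\sum_i d_i^{(j)} < m_j$ supplying exactly the hypotheses you check. Your write-up simply makes explicit the hypothesis verification that the paper leaves implicit.
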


\begin{ex}\label{ex:F125}
We conclude with a non-trivial example of our method at work. Let us take the partial flag manifold $F(1,2,5)$, and we will compute the Laurent polynomial associated to a Fano hypersurface in this Flag manifold. First, we have variables
$x_{0,1},x_{0,0},x_{1,1},x_{1,0}, x_{2,1},x_{2,0}$ and $x_{3,0}$, and we choose the nef partition of $\Delta(1,2,5)$ associated to the roof-paths of length $3$ and $1$ in each block (in other words the multi-degree $\overline{d}$ is just $(3,1)$). This nef partition corresponds to the following Givental Landau-Ginzburg model,
$$
1= x_{0,1} + \dfrac{x_{0,0}}{x_{0,1}} + \dfrac{x_{1,0}}{x_{1,1}} + \dfrac{x_{2,0}}{x_{2,1}} + x_{3,0} + \dfrac{x_{1,1}}{x_{0,1}} + \dfrac{x_{2,1}}{x_{1,1}} + \dfrac{x_{1,0}}{x_{0,0}} + \dfrac{x_{2,0}}{x_{1,0}} 
$$
equipped with potential
$$
w = \dfrac{1}{x_{2,1}} + \dfrac{1}{x_{3,0}} + \dfrac{x_{3,0}}{x_{2,0}}. 
$$
The method described in Proposition \ref{prop:amenflag} produces an amenable collection with only one element, which is given by
$$
v = -e_{(0,1)}^* -2e_{(0,0)}^* -2e^*_{(1,1)} - 3e^*_{(1,0)} - 4e_{(2,0)}^* - e^*_{(3,0)} - 3 e_{(2,1)}^*
$$
which may be completed to a basis if we let $v_2 = e_{(0,0)}^*$,  $v_3 =e^*_{(1,1)}$, $v_4 = e^*_{(1,0)}$, $v_5 = e_{(2,0)}^*$  $v_6 = e^*_{(3,0)}$ and $v_7=e_{(2,1)}^*$. Then in terms of this basis, the Givental Landau-Ginzburg model looks like
$$
1 = \dfrac{1}{y_1} +\dfrac{y_2}{y_1} + \dfrac{y_4}{y_1y_3} + \dfrac{y_5}{y_1y_7} + \dfrac{y_6}{y_1} + \dfrac{y_3}{y_1} + \dfrac{y_7}{y_1y_3} + \dfrac{y_4}{y_1y_2} + \dfrac{y_5}{y_1y_4} 
$$
with potential
$$
w = \dfrac{y_1^3}{y_7} + \dfrac{y_1}{y_6} + \dfrac{y_1^3y_6}{y_5}.
$$
Eliminating $y_1$ from the first equation, we obtain
$$
y_1 = 1 + y_2 + \dfrac{y_4}{y_3} + \dfrac{y_5}{y_7} + y_6 +y_3+ \dfrac{y_7}{y_3} + \dfrac{y_4}{y_2} + \dfrac{y_5}{y_4} 
$$
and thus
\tiny
$$
w =\left(1 + y_2 + \dfrac{y_4}{y_3} + \dfrac{y_5}{y_7} + y_6 +y_3+ \dfrac{y_7}{y_3} + \dfrac{y_4}{y_2} + \dfrac{y_5}{y_4}\right) \left(\dfrac{1}{y_6} + \left(1 + y_2 + \dfrac{y_4}{y_3} + \dfrac{y_5}{y_7} + y_6 +y_3+ \dfrac{y_7}{y_3} + \dfrac{y_4}{y_2} + \dfrac{y_5}{y_4}\right)^2\left(\dfrac{y_6}{y_5} + \dfrac{1}{y_7}\right)\right)
$$
\end{ex}

\section{Further applications}
\label{sect:apply} 

Recently, Coates, Kasprzyk and Prince \cite{ckp} have given a reasonably general method of turning a Givental Landau-Ginzburg model into a Laurent polynomial under specific conditions. We will show that all of their Laurent polynomials are cases of Theorem \ref{thm:almost}, and that all of the Laurent polynomials of Coates, Kasprzyk and Prince come from toric degenerations. We will also comment on the extent to which we recover results of Ilten Lewis and Przyjalkowski \cite{ilp11}, and mention how our results relate to geometric transitions of toric complete intersection Calabi-Yau varieties.

\subsection{The Przyjalkowski method}
\label{sect:ckp}

Here we recall the Przyjalkowski method as described by Coates, Kasprzyk and Prince in \cite{ckp} and show that their construction can be recast in terms of amenable toric degenerations. We will conclude that if the Przyjalkowski method is applied when $Y_\Delta$ is a Fano toric variety, then results of Section \ref{sect:polytopes} imply that all of the Laurent polynomials obtained in \cite{ckp} correspond to amenable toric degenerations of the complete intersection $X$. 

Begin with a smooth toric Fano variety $Y_\Delta$ obtained from a reflexive polytope $\Delta \subseteq M\otimes_\mathbb{Z} \mathbb{R}$ with $M$ a lattice of rank $m$. Then we have an exact sequence
\begin{equation}
\label{eq:ses}
0 \rightarrow \Hom(M,\mathbb{Z}) \rightarrow \mathbb{Z}^{N} \xrightarrow{(m_{ij})} \Pic(Y_\Delta) \rightarrow 0
\end{equation}
where the vertices of $\Delta$ are given an ordering and identified with elements of the set $\{1,\dots, N\}$ and where $\Pic(Y_\Delta)$ is the Cartier divisor class group of $Y_\Delta$. We make the following choices: let $E$ be a a subset of $\{1,\dots,N\}$ corresponding to a set of torus invariant divisors which generate $\Pic(Y_\Delta)$ and let $S_1,\dots, S_k$ be disjoint sets subsets of $\{1,\dots,N\}$ whose corresponding divisors may be expressed as non-negative linear combinations in elements of divisors corresponding to elements of $E$. Assume that each $S_i$ is disjoint from $E$. Torus invariant divisors of $Y_\Delta$ correspond to vertices of $\Delta$. The method of Hori-Vafa \cite{hv} for producing Landau-Ginzburg models for $X$ is then applied. Take variables $x_i$ for $1 \leq i \leq N$, which can be though of as coordinates on the torus $(\mathbb{C}^\times)^N$, and impose relations
$$
q_\ell = \prod_{j=1}^m x_j^{m_{\ell j}} 
$$
for each $\ell \in E$ and $q_\ell$ a variable in $\mathbb{C}^\times$, and equip the associated toric subvariety of $(\mathbb{C}^\times)^N$ with the superpotential
$$
w = \sum_{i=1}^N x_i
$$
By assumption, we have that elements of $E$ form a basis of $\Pic(Y_\Delta)$. Therefore, the matrix $(m_{ij})$ can be written as the identity matrix when restricted to the subspace of $\mathbb{Z}^N$ spanned by elements in $E$. Since the sequence in Equation \ref{eq:ses} is exact the elements $\{e_1,\dots, e_n\}$ of $E$ are part of a basis $\{e_1,\dots, e_n, u_{n+1},\dots, u_N\}$ of $\mathbb{Z}^N$. In this basis, we have 
$$
q_\ell = \prod_{j=1}^m x_j^{m_{\ell j}} = x_\ell \prod_{j=1, i \neq \ell}^N x_j^{m_{\ell j }}
$$
and thus we obtain the relations
$$
x_{\ell} = \dfrac{q_{\ell}}{\prod_{j=1, j \neq \ell}^N x_j^{m_{\ell j }}}.
$$
The superpotential for $Y_\Delta$ then becomes
\begin{equation}
\label{eq:potential}
w = \sum_{\ell \in E} \left( \dfrac{q_{\ell}}{\prod_{j=1, j \neq \ell}^N x_j^{m_{\ell j }}}\right) + \sum_{i \notin E} x_i
\end{equation}
The monomials in $w$ correspond to the vertices of $\Delta$, and we have eliminated variables corresponding to elements of $E$. Since $E$ has cardinality equal to $\rank(\Pic(Y_\Delta))$, the superpotential $w$ is expressed in terms of $n$ variables. All values $m_{ij}$ involved in the expression above are non-negative if $j \in S_i$ for some $1\leq i\leq k$, since we have chosen $S_1,\dots, S_k$ to be non-negative linear combinations in $\Pic(Y_\Delta)$ of elements in $E$.

The Givental Landau-Ginzburg model of $X$ is then given by the subspace $X^\vee$ of $(\mathbb{C}^\times)^{N}$ cut out by equations
$$
1 = \sum_{j\in S_i} x_j \text{ for } 1 \leq i \leq k.
$$
Equipped with the superpotential obtained by restricting $w$ to $X^\vee$. This agrees with the notion of Givental Landau-Ginzburg model presented in Section \ref{sect:general} up to a translation by the constant $k$.

At this point, the authors of \cite{ckp} choose an element $s_i \in S_i$ for each $1 \leq i \leq k$ and then make the variable substitutions for each $\ell \in S_i$
$$
x_\ell = \left\{
	\begin{array}{rl}
		& \dfrac{y_\ell}{1+ \sum_{j \in S_i, j \neq s_i} y_j} \text{ if } \ell \neq s_i \\ 
              & \dfrac{1}{1+ \sum_{j \in S_i, j \neq s_i } y_j} \text{ if } \ell= s_i 
	\end{array}
\right. 
$$
These expressions for $x_\ell$ in terms of $y_j$ then parametrize the hypersurfaces defined by the equations
$$
1 = \sum_{j \in S_i} x_j.
$$
Since all $m_{ij}$ in Equation \ref{eq:potential} are non-negative for $j \in \cup_{i=1}^k S_i$, substitution turns $w$ into a Laurent polynomial expressed in terms of $n-k$ variables, $y_\ell$ for $\ell \in \cup_{i=1}^k S_i$ and $x_j$ for $j \in \{1,\dots,N \} \setminus (\cup_{i=1}^k S_i \cup E)$.

\subsection{Associated amenable collections}

Now we rephrase Przyjalkowski's method in terms of our discussion in Section \ref{sect:general}. Since the monomials of $w$ correspond to vertices of $\Delta$, the conditions on $S_1,\dots,S_k$ and $E$ restrict $\Delta$ so that we may choose $m$ vertices of $\Delta$ which correspond to a spanning set $\{e_1,\dots, e_n\}$ of $M$. Then the remaining vertices of $\Delta$, and $S_1,\dots,S_k$ correspond to subsets of this spanning set. Furthermore, the insistence on positivity of elements of $S_1,\dots,S_k$ in terms of elements of $E$ means that every vertex of $E$ must be a sum $-\sum_{j=1}^n m_{i,j}e_j$ so that $m_{i,j}$ is positive for $j$ corresponding to an element of $\cup_{i=1}^kS_i$. Thus $e_1,\dots, e_n$ must actually span a maximal facet of $\Delta$.

In other words, we have an $n$-dimensional polytope $\Delta$ with simplicial face with vertices $\{e_1,\dots,e_n\}$ a generating set for $M$ so that $Y_\Delta$ is a smooth Fano toric variety. We have now chosen a partition of $\Delta[0]$ so that $E_1,\dots,E_k$ correspond to the vertices to which elements of $S_1,\dots,S_k$ correspond and are thus composed of disjoint subsets of $\{e_1,\dots,e_n\}$. The set $E_{k+1}$ is simply the complement $\Delta[0] \setminus \cup_{i=1}^k E_i$. Furthermore, we have chosen $E_i$ so that elements of $u\in E_{k+1}$ are written as $u = -\sum_{j=1}^n m_{i,j}e_j$ and $m_{i,j} \leq 0$ if $e_j \in E_i$ for $1 \leq i \leq k$.

\begin{proposition}
The sets $E_1,\dots, E_k$ and $E_{k+1}$ form a nef partition of $\Delta$.
\end{proposition}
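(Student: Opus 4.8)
The plan is to verify the two requirements in the definition of nef partition recalled in Section~\ref{sect:general}: that $E_1,\dots,E_{k+1}$ partition $\Sigma_\Delta[1]=\Delta[0]$, and that each $D_i=\sum_{\rho\in E_i}D_\rho$ is a nef Cartier divisor, i.e. admits an integral upper convex $\Sigma_\Delta$-piecewise linear support function $\varphi_i$ with $\varphi_i(\rho)=\delta_{\{\rho\in E_i\}}$ on the rays. The partition property is immediate by construction, since $E_1,\dots,E_k$ are disjoint subsets of the vertices $\{e_1,\dots,e_n\}$ of the distinguished facet $F$ and $E_{k+1}$ is their complement. Moreover $Y_\Delta$ is smooth, so every torus invariant Weil divisor is Cartier and each $\varphi_i$ is automatically integral; thus the whole content of the proposition is the upper convexity (equivalently the nefness) of the functions $\varphi_i$.

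For $1\le i\le k$ I would produce the support function explicitly. Since $\{e_1,\dots,e_n\}$ is a basis of $M$, set $g_i:=\sum_{e_j\in E_i}e_j^{*}\in N$. On a facet vertex one has $g_i(e_l)=\delta_{\{e_l\in E_i\}}\ge 0$, while on any remaining vertex $\rho\in E_{k+1}$ the sign condition recorded above forces $g_i(\rho)\le 0$. Consequently $\max(g_i(\rho),0)=\delta_{\{\rho\in E_i\}}=\varphi_i(\rho)$ on every ray, and I would identify $\varphi_i$ with the globally convex function $\max(g_i,0)$, which is upper convex as the upper envelope of the two linear functions $g_i$ and $0$. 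The one point needing care is that the cone-wise linear $\varphi_i$ genuinely equals $\max(g_i,0)$; this amounts to the statement that $g_i$ does not change sign on the vertices of any face of $\Delta$, so that each cone of $\Sigma_\Delta$ lies on one side of $\{g_i=0\}$. This is where reflexivity of $\Delta$ and the fact that $F$ is an honest facet, with maximal smooth cone $C_F$, must be used.

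I expect $E_{k+1}$ to be the genuine obstacle. Writing $\varphi_{k+1}=\varphi_{-K}-\sum_{i\le k}\varphi_i$, where $\varphi_{-K}\equiv 1$ is the strictly convex support function of the ample divisor $-K_{Y_\Delta}=\sum_\rho D_\rho$, one sees that $\varphi_{k+1}$ is a difference of convex functions, so the argument of the previous paragraph does not transfer. Instead I would work with the primitive inner normal $u_F\in N$ of the facet $F$, for which reflexivity gives $\langle u_F,e_j\rangle=1$ and, since $\langle u_F,\rho\rangle$ is an integer strictly less than $1$ on any non-facet vertex, $\langle u_F,\rho\rangle\le 0$ for all such $\rho$. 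On $C_F$ one computes $\varphi_{k+1}=u_F-\sum_{i\le k}g_i=\sum_{e_j\notin\bigcup_{s\le k}E_s}e_j^{*}$, and the task is to show that this linear form, together with its analogues on the other maximal cones, stays below $\varphi_{k+1}$ and assembles into an upper envelope. The crux is to rule out non-convex bending across the walls of $\Sigma_\Delta$ disjoint from $F$; I would attack it by combining the height-one description of the facets of the reflexive polytope $\Delta$ with the ampleness of $-K_{Y_\Delta}$ and the single-signedness statement for the $g_i$, which together should pin down $\sum_{e_j\notin\bigcup_{s\le k}E_s}e_j^{*}$ as the support function of a nef divisor.
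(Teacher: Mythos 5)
Your reduction of the statement to the convexity of the support functions correctly isolates what a nef partition demands (smoothness disposes of the Cartier and integrality issues), and your sign computations for $g_i=\sum_{e_j\in E_i}e_j^{*}$ on the two kinds of vertices are right under the conventions of the surrounding section. But the argument has two genuine gaps, both of which you flag and neither of which you close. First, for $1\le i\le k$ the identification $\varphi_i=\max(g_i,0)$ requires that $g_i$ not change sign on the vertices of any single facet of $\Delta$: agreeing with $\varphi_i$ on the rays is not enough, because a maximal cone containing both some $e_l\in E_i$ (where $g_i=1$) and a vertex $u$ with $g_i(u)<0$ would make $\max(g_i,0)$ fail to be linear there, and then $\varphi_i$ need not be convex at all. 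You say this "needs care" but give no argument, and nothing in the stated hypotheses obviously rules such a cone out. Second, and more seriously, the convexity of $\varphi_{k+1}$ --- which you yourself call the crux --- is left entirely as a plan ("I would attack it by combining\dots which together should pin down\dots"). Since that step is the actual content of the proposition for the last piece of the partition, the proposal is a strategy rather than a proof.

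For comparison, the paper's own proof takes a completely different and much shorter route: it notes the partition property is immediate and then asserts that on the smooth Fano toric variety $Y_\Delta$ every irreducible reduced torus-invariant Weil divisor is nef and Cartier, so each $D_i=\sum_{\rho\in E_i}D_\rho$ is a sum of nef Cartier divisors and hence nef Cartier; no support functions are constructed. Be aware that this blanket assertion is stronger than what "smooth Fano" gives in general (the exceptional divisor on the toric blow-up of $\mathbb{P}^2$ at a fixed point is not nef), so a fully rigorous argument really does have to use the extra structure of the Przyjalkowski setup --- the positivity relating the sets $S_i$ to $E$ --- which is exactly the information your $g_i$ encode. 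Your instinct that there is real work hidden in this proposition is therefore sound, but the work is not done in your write-up.
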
 
\begin{proof}
By definition, this is a partition of vertices of $\Delta$. It remains to show the existence of convex $\Sigma_\Delta$-piecewise linear functions compatible with this partition, but this follows from the assumption that $Y_\Delta$ is a smooth Fano toric variety, hence all irreducible and reduced torus invariant Weil divisors in $Y_\Delta$ are nef and Cartier.
\end{proof}

The problem is then to show that there are $v_i$ in the lattice $N = \Hom(M,\mathbb{Z})$ so that the method of Section \ref{sect:general} recovers the Laurent polynomial of \cite{ckp}.

\begin{proposition}
\label{prop:ckp}
Let $E_1,\dots,E_{k+1}$ be a nef partition chosen as above. Then there is an amenable collection of vectors $V$ subordinate to this nef partition of $\Delta$ so that the resulting Laurent polynomial is the same as the Laurent polynomial obtained by the Przyjalkowski method.
\end{proposition}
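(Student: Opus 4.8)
The plan is to produce the amenable collection explicitly from the combinatorics of the nef partition and then to check that the birational map $\phi_V$ furnished by Theorem \ref{thm:almost} is literally the rational substitution used in \cite{ckp}, so that the two superpotentials pull back to the same Laurent polynomial.

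First I would take $\{e_1^*,\dots,e_n^*\}$ to be the basis of $N=\Hom(M,\mathbb{Z})$ dual to the basis $\{e_1,\dots,e_n\}$ of $M$ given by the simplicial facet, and set
$$
v_i \;=\; -\sum_{e_l\in E_i} e_l^*,\qquad 1\le i\le k.
$$
Since $E_1,\dots,E_k$ are pairwise disjoint subsets of $\{e_1,\dots,e_n\}$ we have $\langle v_i,e_l\rangle=-1$ for $e_l\in E_i$ and $\langle v_i,e_l\rangle=0$ for $e_l\in E_j$ with $j\ne i$, which immediately yields conditions (1) and (3) of an amenable collection together with condition (2) for all $j\le k$ (the pairing is $0$ in each such case, so the order of the $E_i$ is immaterial). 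For condition (2) with $j=k+1$, every $u\in E_{k+1}$ is either a facet generator $e_l\notin\bigcup_i E_i$, for which $\langle v_i,u\rangle=0$, or an off-facet vertex, which the Przyjalkowski construction writes as $u=-\sum_j m_{u,j}e_j$; in the latter case $\langle v_i,u\rangle=\sum_{e_l\in E_i}m_{u,l}\ge 0$ exactly because of the positivity hypothesis on $S_1,\dots,S_k$ (the $m_{u,l}$ are non-negative whenever $e_l\in\bigcup_i E_i$). Hence $V=\{v_1,\dots,v_k\}$ is amenable subordinate to $E_1,\dots,E_{k+1}$.

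Next I would fix the extension of $V$ to a basis of $N$ so that the remaining coordinates are exactly those of \cite{ckp}. Picking $s_i\in E_i$ as in the Przyjalkowski method, I claim that $\{v_1,\dots,v_k\}\cup\{e_l^*: e_l\notin\{s_1,\dots,s_k\}\}$ is a $\mathbb{Z}$-basis of $N$: passing from $\{e_l^*\}$ to this set replaces $e_{s_i}^*$ by $v_i=-e_{s_i}^*-\sum_{e_l\in E_i,\,l\ne s_i}e_l^*$ and leaves the other generators fixed, so the transition matrix is triangular with $-1$'s on the diagonal and determinant $\pm1$. Running the algorithm of Section \ref{sect:laurent} in this basis, conditions (1), (3) and disjointness show that each monomial $x^{e_l}$ with $e_l\in E_i$ is divisible by the coordinate $x_i$ dual to $v_i$ and is otherwise independent of the $x_j$ with $j\ne i$, $j\le k$; explicitly $x^{e_{s_i}}=x_i^{-1}$ and $x^{e_l}=x_i^{-1}z_l$, where $z_l$ is the coordinate dual to $e_l^*$. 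In particular the $k$ constraints decouple, so the iterative elimination of Section \ref{sect:laurent} coincides with the simultaneous substitution of \cite{ckp}.

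Solving the $i$-th constraint $\sum_{e_l\in E_i}a_{e_l}x^{e_l}=1$ then gives $x^{e_{s_i}}=\bigl(a_{s_i}+\sum_{l\ne s_i}a_{e_l}z_l\bigr)^{-1}$ and $x^{e_l}=z_l\bigl(a_{s_i}+\sum_{l\ne s_i}a_{e_l}z_l\bigr)^{-1}$, which is precisely the parametrization $x_{s_i}=(1+\sum_j y_j)^{-1}$, $x_\ell=y_\ell(1+\sum_j y_j)^{-1}$ of \cite{ckp} after the identification $z_l\leftrightarrow y_l$ and the rescaling absorbing the constants $a_\rho$. Thus $\phi_V$ and the Przyjalkowski substitution are the same birational map $(\mathbb{C}^\times)^{n-k}\dashrightarrow X^\vee$; since condition (2) guarantees that substituting these expressions into $w=\sum_{\rho\in E_{k+1}}a_\rho x^\rho$ produces a Laurent polynomial under the very same positivity $m_{u,l}\ge0$ used in \cite{ckp}, the two Laurent polynomials agree. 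I expect the main difficulty to lie in this final matching: one must track how the monomials $x^\rho$ behave under the change between the \cite{ckp} coordinates $x^{e_l}$ and the coordinates dual to the amenable basis, and verify that the completion coordinates attached to the unused facet vertices $e_l\in E_{k+1}\cap\{e_1,\dots,e_n\}$ (and to any further variables) pass through unchanged, so that the identification is on the nose rather than merely up to a mutation.
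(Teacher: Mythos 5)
Your proposal is correct and follows essentially the same route as the paper: the same amenable collection $v_i=-\sum_{e_l\in E_i}e_l^*$, the same completion of the basis using the chosen $s_i$, and the same verification that the resulting parametrization of the constraint hypersurfaces coincides with the Przyjalkowski substitution after the toric change of coordinates between the dual basis of $\{v_1,\dots,v_n\}$ and the standard coordinates of \cite{ckp}. The final matching you flag as the main difficulty is exactly the explicit monomial change of variables the paper writes out, and your computation of $\langle v_i,u\rangle$ for $u\in E_{k+1}$ via the positivity of the $m_{u,l}$ is the correct reading of the hypothesis.
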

\begin{proof}
Let $e_1^*, \dots, e_m^*$ be the basis of $N$ dual to $e_1,\dots, e_d$
$$
v_i = -\sum_{e_j \in E_i} e_j^*.
$$
This choice of $v_i$ then satisfies $\langle v_i, e_j \rangle = -1$ if $e_j \in E_i$,$\langle v_i, e_j \rangle = 0$ if $e_j \in E_j$ for $j \neq i, k+1$, and $\langle v_i, \rho \rangle \geq 0$ for $\rho \in E_{k+1}$. Thus $v_1,\dots, v_k$ forms an amenable set of vectors. To see that this amenable collection of vectors recovers the Laurent polynomial coming from the Przyjalkowski method, we must choose vectors $v_{k+1},\dots, v_n \in N$ so that $v_1,\dots, v_n$ form a basis of $N$. Here we use the choice of $s_i \in S_i$. Each $s_i$ corresponds to some vertex of $\Delta$ represented by a basis vector of $M$ which we may assume is given by $e_i$ up to re-ordering of the basis of $M$. It is then easy to check that $\{v_1,\dots, v_k \} \cup \{v_{k+1} = e^*_{k+1},\dots,v_n = e^*_n\}$ form a basis for the lattice $N$. In terms of this basis, we have
$$
1 = \sum_{\rho \in E_i} \left(\prod_{j=1}^{k+1} x_i^{\langle v_j, \rho \rangle}\right) =\dfrac{1}{x_{i}}+ \sum_{e_j \in E_i, j \neq i} \dfrac{x_j}{x_{i}}
$$
and thus we have a torus map
$$
\phi_V:(\mathbb{C}^\times)^{n-k} \dashrightarrow X^\vee
$$
parametrizing $X^\vee$ given by variable assignment
$$
x_i =  \left\{
	\begin{array}{rl}
		& 1 + \sum_{e_j \in E_i, j \neq i}y_j \text{ if } 1\leq i \leq k \\ 
              &  y_i \text{ otherwise}
	\end{array}
\right. 
$$
This is expressed in torus coordinates which are dual to the basis $v_1,\dots, v_n$. This is, of course different from the map used in the Przyjalkowski method, but only because we have changed to a basis dual to $v_1,\dots,v_n$ and not the basis $e^*_1,\dots,e^*_n$. Changing basis so that we return to the standard basis with which we began, we must make the toric change of variables on $(\mathbb{C}^\times)^n$
$$
x_j = \left\{
	\begin{array}{rl}
		& \dfrac{z_j}{z_i} \text{ if } e_j \in E_i \text{ for } 1 \leq i \leq k \\ 
              & \dfrac{1}{z_j} \text{ if } 1 \leq j \leq k \\
              &  z_j \text{ otherwise}
	\end{array}
\right. 
$$
In these coordinates, $\phi$ is written as
$$
z_j = \left\{
	\begin{array}{rl}
		& \dfrac{y_j}{1 + \sum_{e_j\in E_i, j \neq i}y_j} \text{ if } e_j \in E_i \text{ for } 1 \leq i \leq k \\ 
              & \dfrac{1}{1 + \sum_{e_j\in E_i, j \neq i}y_j} \text{ if } 1 \leq j \leq k \\
              &  y_j \text{ otherwise}
	\end{array}
\right.
$$
which is precisely the embedding given by the Przyjalkowski method.
\end{proof}

Of course, as a corollary to this, Theorem \ref{thm:main} allows us to conclude that the Przyjalkowski method produces toric degenerations of the complete intersection with which we began.
\begin{theorem}
\label{thm:ckp}
Let $Y_\Delta$ be a smooth toric Fano manifold and let $X$ be a Fano complete intersection in $Y$. If the Givental Landau-Ginzburg model of $X$ becomes a Laurent polynomial with Newton polytope $\Delta'$ by the Przyjalkowski method, then $X$ degenerates to the toric variety $X_{\Delta'}$.
\end{theorem}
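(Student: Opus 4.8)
The plan is to obtain this as a corollary of Proposition~\ref{prop:ckp} together with Theorem~\ref{thm:main}, since Proposition~\ref{prop:ckp} has already translated the Przyjalkowski data into an amenable collection. First I would invoke Proposition~\ref{prop:ckp}: the choices of $E$ and $S_1,\dots,S_k$ underlying the Przyjalkowski method determine a nef partition $E_1,\dots,E_{k+1}$ of $\Delta$ (with $E_i$ the subset of $\{e_1,\dots,e_n\}$ indexed by $S_i$ and $E_{k+1}$ the complement) together with an amenable collection $V=\{v_1,\dots,v_k\}$ subordinate to it, given explicitly by $v_i=-\sum_{e_j\in E_i}e_j^*$, in such a way that $\phi_V^*w$ is exactly the Laurent polynomial produced by the Przyjalkowski method. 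In particular the Newton polytope of $\phi_V^*w$ equals $\Delta'$.

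Next I would check that the hypotheses of Theorem~\ref{thm:main} are met. Because $Y_\Delta$ is smooth, every torus-invariant Weil divisor is Cartier; in particular each $D_i=\sum_{\rho\in E_i}D_\rho$ is Cartier, so—the nef partition structure having been established in the proposition preceding Proposition~\ref{prop:ckp}—both alternatives (1) and (2) of Theorem~\ref{thm:main} hold. Theorem~\ref{thm:main} then produces an amenable toric degeneration $X\rightsquigarrow X_{\Sigma_V}$ whose associated Laurent polynomial has Newton polytope equal to $\mathrm{conv}(\Sigma_V[1])=\Delta_V$. Combining this with Theorem~\ref{thm:f=v}, which identifies $\Delta_{\phi_V^*w}$ with $\Delta_V$, and with the first paragraph, I conclude $\Delta_V=\Delta'$.

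The remaining step, and the one I expect to require the most care, is to pin down the target of the degeneration as $X_{\Delta'}$ itself rather than merely something birational to it. Corollary~\ref{cor:DeltaV} gives $\Delta_V=\mathrm{conv}(\Sigma_V[1])$, and, since $E_{k+1}$ is Cartier, Theorem~\ref{cor:Degen} tells me that $X_{\Sigma_V}$ is a weak Fano crepant partial resolution of the Gorenstein Fano toric variety $X_{\Delta_V}=X_{\Delta'}$. The main obstacle is therefore to verify that in the Przyjalkowski situation no extra subdividing rays are introduced, i.e.\ that $\Sigma_V$ is already the face fan of $\Delta'$ and hence $X_{\Sigma_V}=X_{\Delta'}$; here I would exploit the rigid combinatorics of the setup, namely that $\{e_1,\dots,e_n\}$ span a simplicial facet of $\Delta$ and that the $E_i$ are disjoint subsets of that facet, to argue that every ray of $\Sigma_V$ is in fact a vertex of $\Delta_V$. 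Once this identification is in place, the chain $X\rightsquigarrow X_{\Sigma_V}=X_{\Delta'}$ is precisely the assertion of the theorem.
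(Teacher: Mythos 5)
Your argument is the same one the paper relies on: Theorem \ref{thm:ckp} is presented there as an immediate corollary of Proposition \ref{prop:ckp} together with Theorem \ref{thm:main}, and your observation that smoothness of $Y_\Delta$ makes every torus-invariant divisor Cartier, so that both alternatives of Theorem \ref{thm:main} are available, is exactly the verification needed. The chain $\Delta_{\phi_V^*w}=\Delta_V$ (Theorem \ref{thm:f=v}) and $\Delta_V=\mathrm{conv}(\Sigma_V[1])$ (Corollary \ref{cor:DeltaV}), giving $\Delta_V=\Delta'$, is also how the paper gets the identification of polytopes.

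The one point where you go beyond the paper is the final step, and there your instinct is sound but the proposed repair is not the right statement. Theorem \ref{thm:main} produces a degeneration to $X_{\Sigma_V}$, where $\Sigma_V$ is the fan of intersections $C\cap M_V$ for $C\in\Sigma$; by Theorem \ref{cor:Degen} this is in general only a crepant partial resolution of the face-fan variety $X_{\Delta_V}=X_{\Delta'}$. The paper passes over this identification without comment here, although in the parallel comparison of Proposition \ref{prop:ilp+h} with Theorem \ref{thm:ilp} it explicitly concedes that the degeneration target may be a toric blow-up of the face-fan variety. Proving that every ray of $\Sigma_V$ is a vertex of $\Delta_V$ would not close the gap: even with no superfluous rays, $\Sigma_V$ can strictly refine the face fan of $\Delta_V$ whenever $\varphi_{k+1}|_{M_V}$ is linear across two adjacent cones $C_1\cap M_V$ and $C_2\cap M_V$, i.e.\ when a facet of $\Delta_V$ is a union of several pieces $C\cap\Delta_V$. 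So to get literally $X\rightsquigarrow X_{\Delta'}$ you would need to show that the maximal domains of linearity of $\varphi_{k+1}|_{M_V}$ coincide with the maximal cones of $\Sigma_V$ in the Przyjalkowski setting; otherwise the conclusion should be read as the paper implicitly intends, namely that $X$ degenerates to a toric variety whose fan has ray set with convex hull $\Delta'$, a crepant partial resolution of $X_{\Delta'}$.
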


\subsection{Relation to \cite{ilp11}}
Perhaps it now should be mentioned how this work relates to work of Przyjalkowski \cite{prz} and Ilten, Lewis and Przyjalkowski \cite{ilp11}. In their situation, they begin with a smooth complete intersection Fano variety $X$ in a weighted projective space $\mathbb{WP}(w_0,\dots,w_n)$. By Remark 8 of \cite{prz}, we may assume that $w_0=1$, and hence the polytope $\Delta$ defining $\mathbb{WP}(1,\dots,w_n)$, has vertices given by the points $e_1,\dots,e_n$ and $- \sum_{i=1}^n w_i e_i$ for $\{e_1,\dots,e_n\}$ a basis of $M$. Then the Przyjalkowski method may be applied, essentially verbatim, letting $S_1,\dots, S_k$ correspond to subsets of the vertices $\{e_1,\dots,e_n\}$ and $E = \{ -\sum_{i=1}^n w_ie_i\}$.

Then the amenable collection constructed in the proof of Proposition \ref{prop:ckp} is given by 
$$
v_i = -\sum_{j \in S_i} e_j^*
$$
produces a Laurent polynomial associated to the Givental Landau-Ginzburg model identical to those constructed by Przyjalkowski in \cite{prz}, up to a toric change of basis. Proof of this is essentially identical to the proof of Proposition \ref{prop:ckp}. Since Przyjalkowski assumes that the divisors of $\mathbb{WP}(1,w_1,\dots,w_n)$ which cut out $X$ are Cartier, we have that $X$ is associated to a $\mathbb{Q}$-nef partition $E_1,\dots,E_{k+1}$ where $E_1,\dots, E_k$ are Cartier. This allows us to apply Theorem \ref{thm:main} to show
\begin{proposition}
\label{prop:ilp+h}
There is a degeneration of each smooth Fano weighted projective complete intersection to a toric variety $X_\Sigma$ so that the convex hull of the ray generators of $\Sigma$ is a polytope equal to the Newton polytope of the Laurent polynomial associated to $X$ in \cite{prz}.
\end{proposition}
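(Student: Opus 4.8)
The plan is to treat this as a direct corollary of Theorem \ref{thm:main}, using the amenable collection already exhibited in the paragraph preceding the statement. First I would record the geometry of the ambient space: by Remark 8 of \cite{prz} one may normalize so that $w_0 = 1$, whence $\mathbb{WP}(1,w_1,\dots,w_n) = Y_\Delta$ for the simplex $\Delta \subseteq M \otimes_\mathbb{Z} \mathbb{R}$ with vertices $e_1,\dots,e_n$ and $-\sum_{i=1}^n w_i e_i$, where $\{e_1,\dots,e_n\}$ is a basis of $M$. The Przyjalkowski data then consist of disjoint subsets $S_1,\dots,S_k \subseteq \{e_1,\dots,e_n\}$ together with $E = \{-\sum_i w_i e_i\}$, and I would package these into the partition $E_1,\dots,E_{k+1}$ with $E_i$ the set of vertices corresponding to $S_i$ for $1 \leq i \leq k$ and $E_{k+1}$ the complement. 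Since Przyjalkowski assumes the divisors cutting out $X$ are Cartier, the divisors $D_i = \sum_{\rho \in E_i} D_\rho$ are Cartier for $1 \leq i \leq k$, so this is a $\mathbb{Q}$-nef partition satisfying condition (2) of Theorem \ref{thm:main}.

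Next I would verify that the vectors $v_i = -\sum_{j \in S_i} e_j^*$ form an amenable collection subordinate to $E_1,\dots,E_{k+1}$, exactly as in the proof of Proposition \ref{prop:ckp}: one has $\langle v_i, e_j\rangle = -1$ for $e_j \in E_i$, $\langle v_i, e_j\rangle = 0$ for $e_j \in E_{j'}$ with $j' \neq i, k+1$, and $\langle v_i, -\sum_j w_j e_j\rangle = \sum_{j \in S_i} w_j \geq 0$, so that the single vertex of $E_{k+1}$ pairs non-negatively with each $v_i$. The three defining conditions of an amenable collection are therefore immediate, and the construction of Section \ref{sect:laurent} applied to $V = \{v_1,\dots,v_k\}$ reproduces, after the toric change of coordinates carried out in the proof of Proposition \ref{prop:ckp}, precisely Przyjalkowski's Laurent polynomial associated to $X$ in \cite{prz}.

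Finally I would invoke Theorem \ref{thm:main}. Because condition (2) holds, the amenable collection $V$ determines an amenable toric degeneration $X \rightsquigarrow X_{\Sigma_V}$, and the associated Laurent polynomial $\phi_V^* w$ has Newton polytope equal to the convex hull of $\Sigma_V[1]$. Identifying this Laurent polynomial with Przyjalkowski's via the previous step, and setting $\Sigma = \Sigma_V$, yields the asserted degeneration whose fan has ray generators whose convex hull is the Newton polytope in question.

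I do not anticipate a serious obstacle, since every ingredient is already in place; the only point requiring genuine care is checking that the amenable construction really returns the \emph{same} Laurent polynomial that Przyjalkowski writes down, and not merely one with the same Newton polytope. This matching is supplied by the explicit change of variables in the proof of Proposition \ref{prop:ckp}, which transports the parametrization dual to $v_1,\dots,v_n$ back to the standard coordinates $e_1^*,\dots,e_n^*$; confirming that this specialization behaves correctly in the weighted-projective setting, where $E = \{-\sum_i w_i e_i\}$ is a single vertex, is the one computation I would want to verify in detail.
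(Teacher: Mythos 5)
Your proposal is correct and follows essentially the same route as the paper: the paper likewise reduces to the weighted-projective specialization of the Przyjalkowski setup with $E=\{-\sum_i w_i e_i\}$, takes the amenable collection $v_i = -\sum_{j\in S_i}e_j^*$ from the proof of Proposition \ref{prop:ckp}, notes that the Cartier hypothesis on the divisors cutting out $X$ puts one in case (2) of Theorem \ref{thm:main}, and concludes. The identification with Przyjalkowski's actual Laurent polynomial is, as you say, exactly the change-of-basis computation already carried out in Proposition \ref{prop:ckp}.
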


This is a weaker version of the theorem proved in \cite{ilp11}.

\begin{theorem}[\cite{ilp11} Theorem 2.2]
\label{thm:ilp}
Let $\Delta_f$ be the Newton polytope of the Laurent polynomial associated to a smooth Fano weighted projective complete intersection $X$ in \cite{prz}. Then there is a degeneration of $X$ to $\widetilde{\mathbb{P}}(\Delta_f)$, as defined in Section 1.1 of \cite{ilp11}.
\end{theorem}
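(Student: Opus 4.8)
The plan is to obtain Theorem \ref{thm:ilp} by sharpening Proposition \ref{prop:ilp+h}: the only thing missing there is the identification of the target of the degeneration with the specific toric variety $\widetilde{\mathbb{P}}(\Delta_f)$ of Section~1.1 of \cite{ilp11}. Recall the setup: $X$ is a smooth Fano complete intersection in $\mathbb{WP}(1,w_1,\dots,w_n)$ whose defining sections lie in Cartier classes, and the Przyjalkowski method produces a Laurent polynomial $f$ with Newton polytope $\Delta_f$. In the notation of Proposition \ref{prop:ckp}, the amenable collection $v_i = -\sum_{j\in S_i} e_j^*$ is subordinate to a $\mathbb{Q}$-nef partition $E_1,\dots,E_{k+1}$ in which $E_1,\dots,E_k$ are Cartier. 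By Proposition \ref{prop:Mav} and Proposition \ref{prop:fan} this collection determines a flat degeneration of $X$ onto the binomial complete intersection $X_{\Sigma_V}$ with $\Sigma_V = M_V\cap\Sigma$, and by Theorem \ref{thm:f=v} the Newton polytope of $f=\phi_V^*w$ is exactly $\Delta_V$. Since $E_1,\dots,E_k$ are Cartier, condition (2) of Theorem \ref{thm:main} applies and Corollary \ref{cor:DeltaV} identifies $\Delta_V=\Delta_f$ with the convex hull of the ray generators of $\Sigma_V$; Theorem \ref{cor:Degen} further shows $X_{\Sigma_V}$ is a crepant partial resolution of the Gorenstein Fano $X_{\Delta_f}$.

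What remains is the genuinely new content: to show that $\Sigma_V$ is not merely \emph{some} crepant partial resolution of $X_{\Delta_f}$ but precisely the fan defining $\widetilde{\mathbb{P}}(\Delta_f)$. I would do this by computing the slice $\Sigma_V = M_V\cap\Sigma$ explicitly. Here $\Sigma$ is the face fan over the simplex with vertices $e_1,\dots,e_n,-\sum_i w_ie_i$, and $M_V$ is the linear subspace cut out by $\langle v_i,-\rangle = 0$ for the $v_i$ above; since each $v_i$ is a sum of coordinate functionals indexed by $S_i$, these equations are transparent, and one can read off the primitive generators of $M_V\cap C$ for each cone $C$ of $\Sigma$. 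Comparing the resulting rays and maximal cones term by term against the subdivision used to build $\widetilde{\mathbb{P}}(\Delta_f)$ in \cite{ilp11} would give the identification $X_{\Sigma_V}=\widetilde{\mathbb{P}}(\Delta_f)$, and hence the theorem.

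The main obstacle is exactly this combinatorial matching. Both fans are complete, and both have their ray generators among the boundary lattice points of $\Delta_f$ lying on the hyperplane $\varphi_{k+1}=1$; but our construction produces $\Sigma_V$ indirectly as a slice, so there is no formal guarantee that it uses the same set of boundary rays, nor that it subdivides the cones over the facets of $\Delta_f$ in the same way as \cite{ilp11}. The discrepancy, if any, is confined to these two choices, and the crux is to show the slice $M_V\cap\Sigma$ is forced by the weight data $(w_1,\dots,w_n)$ and the choice of apices $s_i\in S_i$ to coincide with the canonical subdivision of \cite{ilp11}. An alternative route—closer to the argument actually given in \cite{ilp11}, and one that sidesteps the slice description—is to write down a single flat one-parameter family directly: degenerate the defining equations of $X$ to the binomials $\prod_{\rho\in E_i}x_\rho - \prod_{\rho\notin E_i} x_\rho^{\langle v_i,\rho\rangle}$ using the $\mathbb{C}^\times$-weighting attached to $\varphi_{k+1}$, deduce flatness from the regular-sequence statement underlying Proposition \ref{prop:Mav}, and then match the flat limit with $\widetilde{\mathbb{P}}(\Delta_f)$ by comparing Cox-ring presentations.
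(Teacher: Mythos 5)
This statement is quoted from \cite{ilp11} (their Theorem 2.2); the paper gives no proof of it, and in fact explicitly contrasts it with what its own machinery delivers: the remark immediately following the theorem says that Proposition \ref{prop:ilp+h} produces a degeneration of $X$ to a toric variety which is \emph{possibly a toric blow-up} of $\widetilde{\mathbb{P}}(\Delta_f)$. In other words, the target $X_{\Sigma_V}$ of the amenable degeneration need not coincide with $\widetilde{\mathbb{P}}(\Delta_f)$, and a degeneration to a blow-up does not formally yield a degeneration to the blown-down variety. Your first route --- sharpening Proposition \ref{prop:ilp+h} by proving that $\Sigma_V$ equals the fan of $\widetilde{\mathbb{P}}(\Delta_f)$ --- therefore aims at an identification that the authors themselves decline to claim and which, on their account, can fail: both fans refine the face fan of $\Delta_f$ and have rays among the height-one boundary lattice points, but nothing in the slice construction forces the two subdivisions to agree. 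Everything in your argument up through Corollary \ref{cor:DeltaV} and Theorem \ref{cor:Degen} is a correct rehearsal of Proposition \ref{prop:ilp+h}; the step that would upgrade it to the cited theorem is exactly the one you leave as a plan (``I would do this by computing the slice\dots comparing\dots term by term''), and you yourself flag it as the main obstacle. So the proposal does not close the gap.

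Your alternative route is essentially a one-sentence summary of the strategy actually used in \cite{ilp11} (a flat one-parameter binomial degeneration, with the flat limit matched against the Cox presentation of $\widetilde{\mathbb{P}}(\Delta_f)$). That is the right idea for this theorem, but it is not carried out: flatness of the one-parameter family does not follow formally from the regular-sequence statement underlying Proposition \ref{prop:Mav} alone (one must exhibit the $\mathbb{C}^\times$-weighting, verify that the limit ideal is the stated binomial ideal and not something with embedded components), and identifying the flat limit with $\widetilde{\mathbb{P}}(\Delta_f)$ rather than with some other scheme supported on the same toric variety is precisely the content of \cite{ilp11}. In short: the statement is an external citation, the paper's own results only prove the weaker Proposition \ref{prop:ilp+h}, and your proposal reproduces that weaker statement together with an unexecuted plan for the remaining, genuinely nontrivial, identification.
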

The difference between these two statements is that Proposition \ref{prop:ilp+h} shows that $X$ degenerates to a toric variety which is possibly a toric blow-up of the variety to which Theorem \ref{thm:ilp} shows that $X$ degenerates.

\subsection{Geometric transitions of Calabi-Yau varieties}
\label{sect:transition}
Readers interested in compact Calabi-Yau varieties, should note that we may reinterpret the work in Section \ref{sect:general} as a general description of geometric transitions of toric complete intersection Calabi-Yau varieties. 

We note that there is a reinterpretation of the map $\phi_V : (\mathbb{C}^\times)^{n-k} \dashrightarrow (\mathbb{C}^\times)^n$ as a section of the toric morphism $\pi_V: (\mathbb{C}^\times)^n \rightarrow (\mathbb{C}^\times)^{n-k}$ given by
$$
(x_1,\dots,x_n) \mapsto (x_{k+1},\dots, x_n).
$$
which sends the subscheme of $X^\vee$ cut out by the equations $w - \lambda$ for some complex value $\lambda$ to the subscheme of $(\mathbb{C}^\times)^{n-k}$ cut out by the vanishing locus of $\phi_V^*w - \lambda$ in $(\mathbb{C}^\times)^{n-k}$. Thus we obtain a birational map between the fibers of $w$, and fibers of the Laurent polynomial $\phi_V^*w$ which may be compactified to anticanonical hypersurfaces in $X_{(\Delta_{\phi_V^*w})^\circ}$.

Note that if $E_1,\dots, E_{k+1}$ is a nef partition of a Fano toric variety determined by a reflexive polytope $\Delta$, then $E_1,\dots,E_{k+1}$ determine a Calabi-Yau complete intersection $Z$ in $Y_\Delta$, which is precisely an anticanonical hypersurface in the complete intersection quasi-Fano variety $X$ determined by $E_1,\dots, E_k$. According to Batyrev and Borisov \cite{batbor}, there is a reflexive polytope $\nabla$ determined by $E_1,\dots,E_{k+1}$ and a dual $(k+1)$-partite nef partition of $Y_\nabla$ which determines a complete intersection Calabi-Yau variety $Z^\vee$ which is called the Batyrev-Borisov mirror dual of $Z$. 

It is well known \cite{Givental} that the {\it fibers} of the Givental Landau-Ginzburg model of $X$ may be compactified to complete intersections in $Y_\nabla$, and that these compactified fibers are the Batyrev-Borisov mirror dual to anticanonical hypersurfaces $Z$ in $X$. 

Now if we degenerate the homogeneous equations in the coordinate ring of $Y_\Delta$ defining $X$ to equations defining some toric variety $X_{\Delta_V}$, then we obtain simultaneous degenerations of anticanonical hypersurfaces $Z$ in $X$ to anticanonical hypersurfaces $Z'$ of $X_{\Delta_V}$. In general, anticanonical hypersurfaces of $X_{\Delta_V}$ are more singular than anticanonical hypersurfaces of $X$.

Classically, mirror symmetry predicts that there is a contraction of $Z^\vee \rightarrow (Z')^\vee$ which is mirror dual to the degeneration $Z \rightsquigarrow Z'$ where $Z'$ and $(Z')^\vee$ are mirror dual. Since $Z'$ is a toric hypersurface, the contracted variety $(Z')^\vee$ should be a hypersurface in the toric variety $X_{(\Delta_V)^\circ}$.

We deduce the following:
\begin{theorem}
\label{prop:CY}
Let $Z$ be an anticanonical hypersurface in a quasi-Fano complete intersection $X$ in a toric Fano variety $Y_\Delta$ determined by a nef partition $E_1,\dots,E_{k+1}$ and so that $E_1,\dots,E_k$ determines the quasi-Fano variety $X$. Assume there is an amenable collection of vectors subordinate to the nef partition $E_1,\dots,E_{k+1}$ which determines an amenable degeneration $X \rightsquigarrow X_{\Sigma_V}$ where the convex hull of the ray generators of $\Sigma_V$ is a reflexive polytope $\Delta_V$. Then $Z$ degenerates to a hypersurface in $X_{\Delta_V}$, and there is a mirror birational map from $Z^\vee$ to an anticanonical hypersurface in $X_{(\Delta_V)^\circ}$
\end{theorem}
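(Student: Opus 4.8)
The plan is to assemble the statement out of the degeneration results of Section \ref{sect:general} together with the compactification description of the Givental model and Batyrev--Borisov duality; the theorem is essentially the birational shadow, at the level of Calabi--Yau fibres, of the Fano-level degeneration $X \rightsquigarrow X_{\Sigma_V}$ already established.

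For the first assertion I would realize the amenable degeneration as a flat family over $\mathbb{A}^1$ whose general fibre is $X$ and whose special fibre is $X_{\Sigma_V}$, obtained as in Proposition \ref{prop:Mav} by deforming the $k$ homogeneous equations cutting out $X$ in the Cox ring of $Y_\Delta$ into the binomials $\prod_{\rho \in E_i} x_\rho - t\prod_{\rho \notin E_i} x_\rho^{\langle v_i, \rho\rangle}$. The hypersurface $Z \subset X$ is cut out by one further section of $-K_X$, i.e.\ a homogeneous equation $f_{k+1}$ of the degree determined by $E_{k+1}$ (by adjunction $-K_X = [E_{k+1}]|_X$). Holding $f_{k+1}$ fixed over this family produces a flat degeneration of $Z$ whose special fibre $Z'$ is the complete intersection of the $k$ binomials with $f_{k+1}$, hence an anticanonical hypersurface of $X_{\Sigma_V}$; passing to the anticanonical model $X_{\Delta_V}$ via the crepant partial resolution of Theorem \ref{cor:Degen} exhibits $Z'$ as a hypersurface in $X_{\Delta_V}$. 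Flatness of the enlarged family reduces to the statement that the binomials together with a generic $f_{k+1}$ still form a regular sequence, which follows from the mixed-dominating argument of Proposition \ref{prop:Mav} supplemented by the observation that a generic anticanonical section does not contain the toric special fibre.

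For the second assertion I would invoke the birational map $\phi_V : (\mathbb{C}^\times)^{n-k} \dashrightarrow X^\vee$ of Theorem \ref{thm:almost}, under which the superpotential pulls back to the Laurent polynomial $\phi_V^* w$ with Newton polytope exactly $\Delta_V$ by Theorem \ref{thm:f=v}. Viewing $\phi_V$ as a section of $\pi_V$, it carries the fibre $\{w = \lambda\}$ of the Givental model birationally onto the fibre $\{\phi_V^* w = \lambda\}$ in $(\mathbb{C}^\times)^{n-k}$. By Givental's construction \cite{Givental} the former compactifies in $Y_\nabla$ to the Batyrev--Borisov mirror $Z^\vee$; since $\Delta_V$ is reflexive by hypothesis, the latter is the zero locus of a Laurent polynomial with reflexive Newton polytope and hence compactifies, via Batyrev's construction, to an anticanonical hypersurface $W_\lambda$ in $X_{(\Delta_V)^\circ}$. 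Because $\phi_V$ identifies a dense open torus subset of $\{w=\lambda\}$ with a dense open subset of $\{\phi_V^* w = \lambda\}$, and each of $Z^\vee$, $W_\lambda$ is the closure of its respective torus fibre, the birational identification descends to a birational map $Z^\vee \dashrightarrow W_\lambda$, which is the asserted mirror map.

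The step I expect to be the main obstacle is matching the two compactifications: one must be certain that the compactification of $\{w=\lambda\}$ furnished by Givental's procedure genuinely is the Batyrev--Borisov mirror $Z^\vee$, and that the closure of $\{\phi_V^* w = \lambda\}$ induced through $\phi_V$ coincides with its Batyrev compactification in $X_{(\Delta_V)^\circ}$ rather than merely being contained in it. Once the Newton polytope of $\phi_V^* w$ is pinned to the reflexive $\Delta_V$, this is a compatibility of closures along a birational identification of dense tori and the remaining work is bookkeeping; the genuinely geometric input, namely Givental's identification of the Landau--Ginzburg fibres with $Z^\vee$, is imported as a known result, and the conclusion should be read as the birational counterpart of the contraction $Z^\vee \to (Z')^\vee$ predicted by mirror symmetry rather than a construction of that contraction.
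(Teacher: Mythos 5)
Your proposal follows essentially the same route as the paper: the degeneration of $Z$ is obtained by holding the anticanonical section fixed over the toric degeneration of $X$, and the mirror birational map comes from $\phi_V$ carrying fibres of $w$ (compactified via Givental to the Batyrev--Borisov mirror $Z^\vee$ in $Y_\nabla$) to fibres of the Laurent polynomial $\phi_V^*w$, which compactify to anticanonical hypersurfaces in $X_{(\Delta_V)^\circ}$ since the Newton polytope is the reflexive $\Delta_V$. Your treatment is in fact somewhat more careful than the paper's, which asserts the induced degeneration of $Z$ and the compatibility of compactifications without the flatness/regular-sequence check or the closure-matching caveat you flag.
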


Note that this is just a birational map, not necessarily a birational contraction. In work of Fredrickson \cite{fred}, it is shown that an associated birational contraction exists in several cases, once one performs appropriate partial resolutions of singularities on both $Z^\vee$ and $(Z')^\vee$. In \cite{mav2}, Mavlyutov showed that any toric variety $X_\Delta$ with a fixed  Minkowski decomposition of $\Delta^\circ$ can be embedded in a Fano toric variety $Y$ determined by the Cayley cone associated to the given Minkowski decomposition, and that anticanonical hypersurfaces in $X_\Delta$ can be deformed to nondegenerate nef complete intersections in $Y$. He then showed that a mirror contraction exists if the degeneration of $X$ to $X_\Delta$ is obtained in this way.

\bibliographystyle{plain}
\bibliography{degenerations}                           
\end{document}